\numberwithin{equation}{section}
\newcommand{\be}{\begin{eqnarray}}
\newcommand{\ben}{\begin{eqnarray*}}
\newcommand{\en}{\end{eqnarray}}
\newcommand{\enn}{\end{eqnarray*}}
\newtheorem{theorem}{Theorem}[section]
\newtheorem{lemma}{Lemma}[section]
\newtheorem{thm}[theorem]{Theorem}
\newtheorem{remark}{Remark}
\begin{document}
\renewcommand{\theequation}{\arabic{section}.\arabic{equation}}
\begin{titlepage}
\title{\bf Large time behavior of strong solutions for  stochastic Burgers equation with transport noise}
\author{Zhao Dong$^{1,2}$,\ Feimin Huang$^{1,2}$, \ Houqi Su$^{1,2}$\\
	{\small	1. Academy of Mathematics and Systems Science, Chinese Academy of Sciences, Beijing 100190, China}\\
{\small 2. School of Mathematical Sciences, University of Chinese Academy of Sciences, Beijing 100049, China}\\
{\small {\sf dzhao@amt.ac.cn},\ {\sf fhuang@amt.ac.cn}, \ {\sf marksu@amss.ac.cn} }}
\date{}
\end{titlepage}
\maketitle

\noindent\textbf{Abstract}:
We consider the large time behavior of strong solutions to the stochastic Burgers equation with transport noise. It is well known that both the rarefaction wave and viscous shock wave are time-asymptotically stable for deterministic Burgers equation since the pioneer work of A. Ilin and O. Oleinik \cite{Olinik64} in 1964. However, the stability of these wave patterns under stochastic perturbation is not known until now. In this paper, we give a definite answer to the stability problem of the rarefaction and viscous shock waves for the 1-d stochastic Burgers equation with transport noise. That is, the rarefaction wave is still stable  under white noise perturbation and the viscous shock is not stable yet. 
Moreover, a time-convergence rate toward the rarefaction wave is obtained. To get the desired decay rate, an important inequality (denoted by Area Inequality) is derived. This inequality plays essential role in the proof, and may have applications in the related problems for both  the stochastic and deterministic PDEs.  

\

\noindent \textbf{AMS Subject Classification}: 35L65,76N10,60H15,35R60.\\

\noindent\textbf{Keywords}: Stochastic Burgers equation, rarefaction, shock wave, stability, instability, area inequality.

\section{Introduction}
The one dimensional Burgers equation reads
\begin{eqnarray}\label{1.1}
  \begin{array}{ll}
  u_t + uu_x =\nu u_{xx}, \\
  \end{array}
\end{eqnarray}
where the viscosity coefficient $\nu$ is a positive constant, and becomes the inviscid Burgers equation as $\nu=0$, i.e., 
\begin{eqnarray}\label{Hopf}
  \begin{array}{ll}
  u_t + uu_x = 0. \\
\end{array}
\end{eqnarray}
It is well known that the inviscid Burgers equation \eqref{Hopf} has rich wave phenomena such as shock and rarefaction wave, cf. \cite{Smoller}. Consider the Riemann initial data 
\begin{eqnarray}\label{Riemann}
u(x,0) = \left \{
\begin{array} {ll}
	u_- ,\quad x < 0,\\
	u_+, \quad x > 0, 
\end{array}
\right.
\end{eqnarray}
then the inviscid Burgers equation \eqref{Hopf} admits shock or rarefaction wave depending on the sign of $u_--u_+$. If $u_-<u_+$,  the solution of \eqref{Hopf} is rarefaction wave given by
\begin{eqnarray}\label{rarefaction}
u^r(t,x)=u^r\left( \frac{x}{t} \right) = \left \{
	 \begin{array} {ll}
	 	u_-, \quad &  x  < u_-t,\\
	 	\frac{x}{t}, \quad & u_-t <  x  < u_+t,\\
	 	u_+, \quad &  x  > u_+t,\\
	 \end{array}
\right.	
\end{eqnarray}
and if $u_->u_+$, the solution is shock wave given by 
\begin{eqnarray}\label{shock}
u^s(t,x) = \left \{
\begin{array} {ll}
u_-, \quad &  x  < x-st,\\
u_+, \quad &  x  > x-st,\\
\end{array}
\right.	
\end{eqnarray}
where $s=\frac{u_++u_+}{2}$ is the propagation speed of shock due to Rankine-Hugoniot (RH) condition.
 Both the shock and rarefaction waves are nonlinearly stable for the equation \eqref{Hopf}, cf. \cite{Smoller}. Since the Burgers equation \eqref{1.1} is a viscous version of \eqref{Hopf}, it is commonly conjectured that the viscous fundamental wave patterns, i.e., viscous shock wave and rarefaction wave, are stable for the Burgers equation \eqref{1.1}. The conjecture was first verified for one space dimension in \cite{Olinik64}, see also \cite{H&N91}, and \cite{hx2}, \cite{kawashima}, \cite{Ito96}, \cite{Nikishiwa99}, \cite{Xin90} for multi-dimensional scalar viscous conservation laws.  The conjecture is also valid for general systems of viscous conservation laws such as the compressible Navier-Stokes equations, cf. \cite{Huang-Xin-Yang}, \cite{LZ3},  \cite{MN85}-\cite{M&N86},  \cite{Z} and the Boltzmann equation, cf. \cite{Huang-Xin-Yang}, \cite{Liu-Yang-Yu-Zhao}, \cite{Liu-Yu} and the references therein, in which new techniques like weighted characteristic energy method, approximate Green's function and Evans function approach were developed. Nevertheless, we would ask a natural question: 

\

{\bf (Q):} Would shock and rarefaction waves be still stable under the stochastic perturbation?

\

From the physical point of view, these basic wave patterns might be perturbed by stochastic noise in the real environment. As a starting point, we focus on  the following stochastic Burgers equation with transport noise, 
\begin{eqnarray}\label{stochastic}
  \begin{array}{ll}
  du + uu_xdt =\mu u_{xx}dt + \sigma u_x dB(t),
  \end{array}
\end{eqnarray}
where $B(t)$ is one-dimensional standard Brownian motion on some  probability space $(\Omega,{\cal F}_t, P )$.  The stochastic term $\sigma u_x dB(t)$  can be explained as follows.  Let $u(t,x)$ be the smooth solution of the deterministic Burgers equation \eqref{1.1}.  
When the position $x$ is perturbed by a Brownian motion $\sigma B(t)$,  then $u(t,x+\sigma B(t))$ satisfies the equation \eqref{stochastic} due to It\^{o} formula,
where $\mu=\nu+\frac12\sigma^2$, see \cite{f2015} for the details,  see also \cite{Lillo94}, \cite{es} and \cite{Javier04}. The  shock formation, local ($\nu=0$) and global existence ($\nu>0$) of smooth solutions to \eqref{stochastic}  was investigated in \cite{ABT19}. 
The regularity effect of  noise recently attracts considerable attentions, see the interesting papers \cite{fgp2010, flandoli2021high, g2020, cg2019, gm2018, gs2019, gs2017, gj2013} and the references therein. 

In this paper, we focus on the large time behavior of strong solutions toward the viscous shock and rarefaction waves, and try to give a definite answer to the above question {\bf (Q)} for the stochastic Burgers equation \eqref{stochastic}. Roughly speaking, we show that the rarefaction wave is still stable  under transport noise perturbation and the viscous shock is not stable yet. Now we formulate the main results. 

Since the rarefaction wave $u^r(\frac{x}{t})$ given in \eqref{rarefaction} is only Lipschitz continuous, we follow the method of \cite{MN92} to introduce approximate rarefaction wave, which is a smooth solution of the following problem 
\begin{eqnarray}\label{app}
\left\{
\begin{array} {ll}
\bar{u}_t+\bar{u}\bar{u}_x=0,\\
\displaystyle \bar{u}(0,x)=\frac{u_{+}+u_{-}}{2}+ \frac{u_{+}-u_{-}}{2}k\int_{0}^{x}(1+\xi^2)^{-1}d\xi\to u_\pm,~\mbox{as}~x\to \pm\infty,
\end{array}
\right.	
\end{eqnarray}
where $\displaystyle k=\left(\int_{0}^{+\infty}(1+\xi^2)^{-1}d\xi\right)^{-1}$. The main result is stated as follows, 
\begin{theorem}[{\bf Rarefaction wave}]\label{1d}
Let  $\sigma^2<2\mu$ and $u_0(x)$ be the initial data of the stochastic Burgers equation \eqref{stochastic}. Set $\phi(t,x)=u(t,x)-\bar{u}(t,x)$.  If $\phi(0,x))\in\mathbb H^{2}(\mathbb R)$, then there exists a unique strong solution of \eqref{stochastic} satisfying
 \begin{eqnarray}\label{l2-1}
\mathbb E \| u(t,\cdot) - u^r(\cdot,t) \|_{\mathbb L^p (\mathbb R)}  \leq  C_p(2+t)^{-\frac{p-2}{4p}} \ln(2+t), \quad \forall p \in [2,+\infty),
\end{eqnarray}
and 
 \begin{eqnarray}\label{rate}
 \mathbb E \| u(t,\cdot) - u^r(t,\cdot) \|_{\mathbb L^\infty (\mathbb R)}  \leq  C_\epsilon (2+t)^{-\frac14+\epsilon},~\forall \epsilon>0, 
 \end{eqnarray}
where $u^r(t,x)$ is the rarefaction wave given in \eqref{rarefaction}. Moreover, it holds that for any $\epsilon>0$,  there exists a $\mathscr F_\infty$ measurable random variable  $C_\epsilon(\omega)\in \mathbb L^2(\Omega)$ such that 
  \begin{eqnarray}\label{rate for as}
   \| u(t,\cdot) - u^r(t,\cdot) \|_{\mathbb L^\infty (\mathbb R)}  \leq  C_\epsilon(\omega) (2+t)^{-\frac14+\epsilon}, ~a.s.
   \end{eqnarray}
\end{theorem}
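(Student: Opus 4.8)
The plan is to exploit the transport structure of the noise to reduce \eqref{stochastic} to a \emph{pathwise deterministic} Burgers equation, after which all the stochasticity is carried by a single random translation. First I would construct the solution explicitly: let $v$ solve the deterministic viscous Burgers equation
\begin{equation}\label{pp-det}
v_t+vv_x=\nu v_{xx},\qquad \nu:=\mu-\tfrac12\sigma^2,\qquad v(0,\cdot)=u_0,
\end{equation}
whose viscosity $\nu>0$ is positive precisely under the hypothesis $\sigma^2<2\mu$, and set $u(t,x):=v(t,x+\sigma B(t))$. A direct application of the It\^o formula, in which the quadratic variation of $B$ contributes the correction $\tfrac12\sigma^2 v_{xx}$ and raises the viscosity from $\nu$ to $\mu=\nu+\tfrac12\sigma^2$, shows that this $u$ is a strong solution of \eqref{stochastic}; uniqueness follows by running the same change of variables backwards. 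Since $u_0$ (equivalently $\bar u(0,\cdot)+\phi(0,\cdot)$ with $\phi(0,\cdot)\in\mathbb H^2(\mathbb R)$) is deterministic and $\nu>0$, the solution $v$ of \eqref{pp-det} is a smooth \emph{deterministic} function, and the entire randomness of $u$ is encoded in the translation $x\mapsto x+\sigma B(t)$.

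With this reduction in hand I would split off the random shift. Using translation invariance of the $\mathbb L^p$ norm,
\begin{equation}\label{pp-split}
\|u(t,\cdot)-u^r(t,\cdot)\|_{\mathbb L^p}=\|v(t,\cdot)-u^r(t,\cdot-\sigma B(t))\|_{\mathbb L^p}\le \|v(t,\cdot)-u^r(t,\cdot)\|_{\mathbb L^p}+\|u^r(t,\cdot)-u^r(t,\cdot-\sigma B(t))\|_{\mathbb L^p}.
\end{equation}
The first term on the right is entirely deterministic and will carry the stated rate, while the second is a shift error that I expect to be of strictly lower order.

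The deterministic term is the heart of the matter, and I would treat it by the energy method of \cite{MN92}: with $\bar u$ the smooth approximate rarefaction wave of \eqref{app} and $\phi=v-\bar u$, one derives $\phi_t+(\bar u\phi)_x+\tfrac12(\phi^2)_x=\nu\phi_{xx}+\nu\bar u_{xx}$ and runs $\mathbb L^2$ and $\mathbb H^1$ energy estimates, exploiting the crucial good sign $\bar u_x\ge0$ of the rarefaction to produce dissipation. Converting these bounds into the sharp rates $\|v(t,\cdot)-u^r(t,\cdot)\|_{\mathbb L^p}\le C_p(2+t)^{-\frac{p-2}{4p}}\ln(2+t)$ and $\|v(t,\cdot)-u^r(t,\cdot)\|_{\mathbb L^\infty}\le C_\epsilon(2+t)^{-\frac14+\epsilon}$ --- via Gagliardo--Nirenberg interpolation together with the decay $\|\bar u_x\|_{\mathbb L^\infty}\le C(1+t)^{-1}$ of the approximate wave --- is where the borderline logarithm and the loss of $\epsilon$ arise, and this is exactly the step requiring the \emph{Area Inequality} to close the critical estimate. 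I expect this deterministic rate to be the main obstacle.

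For the shift error and the almost-sure statement the arguments are elementary. The Lipschitz bound $\|u^r(t,\cdot)-u^r(t,\cdot-a)\|_{\mathbb L^\infty}\le |a|/t$ together with $\|u^r(t,\cdot)-u^r(t,\cdot-a)\|_{\mathbb L^1}\le |a|\,|u_+-u_-|$ gives, by interpolation, $\|u^r(t,\cdot)-u^r(t,\cdot-a)\|_{\mathbb L^p}\le C|a|\,t^{-\frac{p-1}{p}}$; taking $a=\sigma B(t)$ and using $\mathbb E|B(t)|\le C\sqrt{t}$ shows this contribution is $O(t^{-\frac{p-2}{2p}})$, hence dominated by the deterministic rate in \eqref{pp-split}, which yields \eqref{l2-1} and \eqref{rate}. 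For \eqref{rate for as} I would use the pathwise decomposition $\|u(t,\cdot)-u^r(t,\cdot)\|_{\mathbb L^\infty}\le C_\epsilon(2+t)^{-\frac14+\epsilon}+\sigma|B(t)|(2+t)^{-1}$, whose first summand is non-random, and set $C_\epsilon(\omega):=C_\epsilon+\sigma\sup_{t\ge0}|B(t)|(2+t)^{-\frac34+\epsilon}$. A dyadic decomposition of $[0,\infty)$ combined with Doob's maximal inequality $\mathbb E\sup_{s\le T}|B(s)|^2\le 4T$ gives, for $\epsilon<\tfrac14$ (which suffices, the bound being monotone in $\epsilon$), $\mathbb E\,C_\epsilon(\omega)^2\le C+C\sum_n 2^{n(-\frac12+2\epsilon)}<\infty$, so $C_\epsilon(\omega)\in\mathbb L^2(\Omega)$, completing the proof.
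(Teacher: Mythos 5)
Your proposal is correct in substance but takes a genuinely different route from the paper. The paper never exploits the reduction you build on (it appears in its introduction only as motivation for the model): instead it works directly on the stochastic perturbation equation, runs $\mathbb L^2$, $\mathbb H^1$ and $\mathbb L^p$ energy estimates in expectation, controls the stochastic integrals by martingale/BDG estimates and an induction over exponents $p_n\to\infty$, and obtains global existence via a cut-off equation, a contraction mapping and stopping times, with Krylov's theory supplying the regularity. You instead observe that spatially homogeneous transport noise can be removed entirely: $u(t,x):=v(t,x+\sigma B(t))$, with $v$ the deterministic Burgers solution of viscosity $\nu=\mu-\tfrac12\sigma^2>0$, solves \eqref{stochastic} by It\^o's formula, so all randomness sits in a single translation. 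The splitting $\|u(t,\cdot)-u^r(t,\cdot)\|_{\mathbb L^p}\le\|v(t,\cdot)-u^r(t,\cdot)\|_{\mathbb L^p}+\|u^r(t,\cdot)-u^r(t,\cdot-\sigma B(t))\|_{\mathbb L^p}$ then reduces \eqref{l2-1}--\eqref{rate for as} to deterministic decay rates plus elementary translation estimates on $u^r$, and your bounds for the latter (of order $t^{-\frac{p-2}{2p}}$ in $\mathbb L^p$, $t^{-\frac12}$ in $\mathbb L^\infty$) indeed decay faster than the claimed rates, so the deterministic part dominates as you say. What your route buys: all of the stochastic machinery (BDG, the $p_n$-iteration, cut-off plus stopping times) disappears, since the deterministic shadow of the paper's Sections \ref{a priori estimates}--\ref{decay estimates} — $L^2$/$L^p$ energy method, Gagliardo--Nirenberg, and still the Area Inequality — suffices pathwise and even yields a slightly better logarithmic factor. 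What the paper's route buys: it is intrinsic to the SPDE and robust, surviving noises for which no exact change of variables exists (e.g.\ $x$-dependent transport coefficients or several Brownian motions), whereas your reduction is special to this noise.

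Two points need care before your outline is a complete proof. First, uniqueness: ``running the change of variables backwards'' on an arbitrary strong solution $u$ means applying the It\^o--Wentzell formula (not merely It\^o) to $u(t,x-\sigma B(t))$, where $u$ is itself an $\mathbb H^2$-valued semimartingale; this is legitimate at the regularity of strong solutions in Krylov's framework, but it is the one step you cannot wave through — the paper's entire cut-off/contraction section exists precisely to pin down existence and uniqueness in this class. Second, the deterministic rates you need are not simply citable in the stated form; you must still run the time-weighted $L^p$ energy argument and invoke the Area Inequality, as you anticipate, though without expectations this is routine. Finally, a small slip: the Lipschitz bound is $\|u^r(t,\cdot)-u^r(t,\cdot-a)\|_{\mathbb L^\infty}\le |a|/t$, not $|a|/(2+t)$, and $|B(t)|/t$ blows up a.s.\ as $t\to 0^+$; you should cap the shift error by $u_--u_+$ (its trivial bound) for small $t$ so that your random constant $C_\epsilon(\omega)$ is well defined and lies in $\mathbb L^2(\Omega)$.
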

\begin{remark}
Theorem \ref{1d} answers the question {\bf (Q)} in the case of rarefaction wave for the stochastic Burgers equation \eqref{stochastic}, i.e., the rarefacton wave is nonlinearly stable under transport noise. 
\end{remark}
\begin{remark}
The time-decay rate \eqref{l2-1} in $\mathbb L^p$ norm is almost optimal! Indeed, even for the deterministic heat equation 
\begin{eqnarray}\label{heat}
u_t=u_{xx}, ~u(0,x)\in L^2(\mathbb R),
\end{eqnarray}
the optimal decay rate of $u(t,x)$ in ${\mathbb L}^p$ is $(2+t)^{-\frac{p-2}{4p}}$.  In fact, the term $ln(2+t)$ in \eqref{l2-1} is 
coming from the  Brownian motion $u_xdB(t)$. 
\begin{remark}
	The assumption $\sigma^2<2\mu$ is equivalent to $\nu>0$ which is the viscosity of the deterministic Burgers equation \eqref{1.1}. Hence the assumption $\sigma^2<2\mu$ is necessary. 
\end{remark}
%

\end{remark}

The proof of Theorem \ref{1d} relies on a key inequality below, denoted by Area Inequality. 
 \begin{theorem}[{\bf Area Inequality}]\label{area} 
 		Assume that a Lipschitz continuous function $f(t)\ge 0$  satisfies 
	\begin{eqnarray}\label{lemma1}
	f^\prime(t)  \leq  C_0(1+t)^{-\alpha},
	\end{eqnarray}
	and
	\begin{eqnarray}\label{con}
	\int_0^t f(s)ds  \le   C_1(1+t)^\beta\ln^\gamma (1+t), ~\gamma\ge 0,
	\end{eqnarray}
for some constants $C_0$ and $C_1$, where $0\le\beta<\alpha$.	Then it holds that if $\alpha+\beta<2$, 
	\begin{equation}\label{l1}
	f(t) \le 2\sqrt{C_0C_1}(1+t)^{\frac{\beta-\alpha}{ 2}}\ln^{\frac{\gamma}{2}}(1+t),\quad \mbox{as}\quad t>>1. 
	\end{equation} 
 Moreover, if $\beta=\gamma=0$, i.e., $f(t)\in L^1[0,\infty)$ and $ 0<\alpha\le 2$, it holds that
 \begin{equation}\label{lemma2}
 f(t)= o(t^{-\frac{\alpha}{2}}), ~~\mbox{as}~~t>>1,
 \end{equation}
and the index $\frac{\alpha}{2}$ is optimal.
 \end{theorem}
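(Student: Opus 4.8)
The name ``area inequality'' signals a purely geometric mechanism: the one--sided derivative bound \eqref{lemma1} forbids $f$ from having been much smaller in the recent past, so a large value $f(T)=M$ is forced to persist on a whole interval to the left of $T$, and the \emph{area} of that plateau is then capped by the integral bound \eqref{con}. Balancing ``height times width'' against ``available area'' pins down $M$. Concretely, fix a large $T$, set $M=f(T)$, and choose a backward window length $h\in(0,T)$ to be optimized. Integrating \eqref{lemma1} from $s$ up to $T$ and using that $(1+\tau)^{-\alpha}$ is decreasing gives, uniformly for $s\in[T-h,T]$,
\be
&& f(s)=f(T)-\int_s^T f'(\tau)\,d\tau\ \ge\ M-C_0\int_s^T(1+\tau)^{-\alpha}\,d\tau\ \ge\ M-C_0(1+T-h)^{-\alpha}h. \no
\en
Since $f\ge 0$, integrating this lower bound over $[T-h,T]$ and comparing with \eqref{con} yields
\be
&& Mh-C_0(1+T-h)^{-\alpha}h^2\ \le\ \int_{T-h}^{T} f(s)\,ds\ \le\ \int_0^{T} f(s)\,ds\ \le\ C_1(1+T)^\beta\ln^\gamma(1+T), \no
\en
hence
\be
&& M\ \le\ \frac{C_1(1+T)^\beta\ln^\gamma(1+T)}{h}+C_0(1+T-h)^{-\alpha}h. \no
\en

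Now I optimize the window. Writing $A=C_1(1+T)^\beta\ln^\gamma(1+T)$ and $B=C_0(1+T)^{-\alpha}$ and provisionally replacing $(1+T-h)^{-\alpha}$ by $(1+T)^{-\alpha}$, the expression $A/h+Bh$ is minimized by AM--GM at $h^\ast=\sqrt{A/B}$, with minimal value $2\sqrt{AB}=2\sqrt{C_0C_1}\,(1+T)^{(\beta-\alpha)/2}\ln^{\gamma/2}(1+T)$, which is exactly \eqref{l1}. The delicate point, and the step I expect to demand the most care, is the \emph{admissibility} of this choice: I must verify $h^\ast=o(T)$ so that $T-h^\ast>0$ and the replacement $(1+T-h^\ast)^{-\alpha}=(1+T)^{-\alpha}(1+o(1))$ is legitimate, and I must track both this $(1+o(1))$ factor and the logarithm carefully so that the asymptotic constant is precisely $2$. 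Since $h^\ast=\sqrt{C_1/C_0}\,(1+T)^{(\alpha+\beta)/2}\ln^{\gamma/2}(1+T)$, the requirement $h^\ast=o(T)$ is exactly the hypothesis $\alpha+\beta<2$; this is what makes the balancing window short compared with $T$, and it is precisely where the method would fail for $\alpha+\beta\ge 2$.

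For the endpoint case $\beta=\gamma=0$ (so $f\in L^1[0,\infty)$) I would keep the same backward estimate but now exploit that the tail $\int_{T-h}^{\infty} f\to 0$. Taking a fixed small $\delta>0$ and the scaling window $h=\delta(1+T)^{\alpha/2}$ makes the correction term $C_0(1+T-h)^{-\alpha}h^2=C_0\delta^2(1+o(1))$, so that
\be
&& (1+T)^{\alpha/2}f(T)\ \le\ C_0\delta+\frac{1}{\delta}\int_{T-h}^{\infty} f(s)\,ds. \no
\en
Because $\alpha\le 2$ keeps $T-h\to\infty$ (for $\alpha=2$ one takes $\delta<1$, which only changes constants), letting $T\to\infty$ kills the tail and gives $\limsup_{T\to\infty}(1+T)^{\alpha/2}f(T)\le C_0\delta$; as $\delta>0$ is arbitrary this is \eqref{lemma2}. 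To show the index $\alpha/2$ cannot be improved, I would construct for each $\eta>0$ a nonnegative, piecewise--linear $f$ made of disjoint triangular bumps centered at $t_n=2^n$, of height $H_n=t_n^{-\alpha/2-\eta}$ and rising slope $C_0t_n^{-\alpha}$. The forced rise width is $w_n\sim t_n^{\alpha/2-\eta}=o(t_n)$, so the bumps are disjoint and \eqref{lemma1} holds; each bump has area $\sim H_n^2 t_n^{\alpha}=t_n^{-2\eta}$, and $\sum_n t_n^{-2\eta}<\infty$ guarantees $f\in L^1$. Yet $t_n^{\alpha/2+\eta}f(t_n)\not\to 0$, so $f\ne o(t^{-\alpha/2-\eta})$, confirming that $\alpha/2$ is the critical exponent.
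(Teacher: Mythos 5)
Your proposal runs on exactly the mechanism the paper's proof uses --- the derivative bound \eqref{lemma1} forces a large value $f(T)$ to persist on a backward window, and the area of that plateau is capped by \eqref{con} --- but your implementation is genuinely different in form and in some respects cleaner. The paper argues by contradiction: it assumes $f(t_n)>2\sqrt{C_0C_1}(1+t_n)^{(\beta-\alpha)/2}\ln^{\gamma/2}(1+t_n)$ along a sequence, compares $f$ with the explicit solution of the backward ODE $g_n'(\tau)=C_0(1+\tau)^{-\alpha}$, locates its zero $s_n$ by solving the ODE exactly (which forces a separate treatment of $\alpha=1$ and a mean-value-theorem expansion), and bounds the integral from below by a triangle's area. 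Your direct version --- integrate the pointwise lower bound $f(s)\ge f(T)-C_0(1+T-h)^{-\alpha}h$ over $[T-h,T]$ and optimize $h$ --- treats all $\alpha$ uniformly, needs no contradiction or case split, and identifies the role of $\alpha+\beta<2$ (admissibility $h^\ast=o(T)$) exactly where the paper uses it in \eqref{inf}. Your handling of the $L^1$ case and of optimality (triangular bumps at $t_n=2^n$ versus the paper's bumps at $s_n=e^n$) is parallel to the paper's and sound; the only nitpick there is that the rising slope should be $C_0(1+t_n)^{-\alpha}$ rather than $C_0t_n^{-\alpha}$, so that \eqref{lemma1} holds all the way up to the peak.

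There is, however, one quantitative point you flagged but did not close, and as written it leaves the stated inequality unproven: with the uniform bound $\int_s^T(1+\tau)^{-\alpha}d\tau\le (1+T-h)^{-\alpha}h$, your optimization yields $f(T)\le(2+o(1))\sqrt{C_0C_1}\,(1+T)^{(\beta-\alpha)/2}\ln^{\gamma/2}(1+T)$ with a \emph{positive} $o(1)$, which is strictly weaker than \eqref{l1} at every finite $T$. The repair costs one line: integrate the correction term exactly,
\ben
\int_{T-h}^{T}\int_s^T(1+\tau)^{-\alpha}\,d\tau\,ds=\int_{T-h}^{T}(1+\tau)^{-\alpha}(\tau-T+h)\,d\tau\le \frac{h^2}{2}\,(1+T-h)^{-\alpha},
\enn
so that $f(T)\le A/h+\tfrac{C_0}{2}(1+T-h)^{-\alpha}h$ with $A=C_1(1+T)^\beta\ln^\gamma(1+T)$. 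Optimizing $h$ now gives $f(T)\le\sqrt{2C_0C_1}\,(1+o(1))\,(1+T)^{(\beta-\alpha)/2}\ln^{\gamma/2}(1+T)$, and since $\sqrt{2}(1+o(1))<2$ for $t>>1$, the bound \eqref{l1} follows with room to spare. (This mirrors the slack in the paper's own argument, whose triangle comparison really only needs $C_2^2>3C_0C_1$.) With this adjustment, your proof establishes the theorem in full, including \eqref{lemma2} and the optimality of the exponent $\frac{\alpha}{2}$.
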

\begin{remark}
The time-decay rate \eqref{l1} is surprising even for the case  $0<\alpha<1,~ \beta=\gamma=0$, in which the condition \eqref{con} becomes 	
\begin{eqnarray}\label{l3}
\int_0^{+\infty} f(t)dt  \le   C_1<+\infty.
\end{eqnarray}
To get the decay rate of $f(t)$, the usual way (see \cite{hl}) is to multiply \eqref{lemma1} by $1+t$,  then we have
	\begin{eqnarray}\label{lemma3}
[(1+t)f(t)]'\le f(t)+C_0(1+t)^{1-\alpha}.
	\end{eqnarray}
Integrating \eqref{lemma3} on $[0,T]$ implies that 
\begin{eqnarray}\label{lemma4}
(1+t)f(T)\le f(0)+\int_0^Tf(t)dt+C_0\int_0^T(1+t)^{1-\alpha}dt\le C+\frac{C_0}{2-\alpha}(1+t)^{2-\alpha},
\end{eqnarray}	
which gives 
\begin{eqnarray}\label{lemma5}
f(t)\le C(1+t)^{1-\alpha}, ~\mbox{as}~t>>1.
\end{eqnarray}	
It is impossible from  \eqref{lemma5} to get the time-decay rate  as $0<\alpha<1$ through the usual way \eqref{lemma3} to \eqref{lemma5}.  
Also note that for $1<\alpha<2$, the decay rate $t^{-\frac\alpha 2}$ in \eqref{lemma2} is faster than $t^{1-\alpha}$ in \eqref{lemma5}. 
\end{remark}
\begin{remark}
Since the inequality \eqref{lemma1} might be derived only for some $0<\alpha<1$ in the stability analysis, where $f(t)$ usually corresponds to the norm of some Sobolev spaces, we can expect that the Area Inequality might have  applications in the time-decay rate of solutions for both the deterministic and stochastic PDEs, see recent work \cite{hx1}. 
\end{remark}

In the case that $u_->u_+$, let $u^s(t,x):=\tilde{u}(\xi),~ \xi=x-st$ be the viscous shock wave of the deterministic Burgers equation \eqref{1.1} satisfying 
\begin{eqnarray}\label{vs}
\left\{
\begin{array} {ll}
-s\tilde{u}'+\tilde{u}\tilde{u}'=\nu \tilde{u}'',\\
\tilde{u}(\xi)\to u_\pm,~\mbox{as}~\xi\to \pm\infty,
\end{array}
\right.	
\end{eqnarray}
where $'=\frac{d}{d\xi}$, $s=\frac{u_-+u_+}{2}$. Without loss of generality, let $s=0$, i.e., $u_-=-u_+>0$. 
It is known in \cite{Olinik64} that the equation \eqref{vs} admits a unique solution $\tilde{u}(\xi)$ up to a shift. Since the position $x$ is perturbated by $\sigma B(t)$, the perturbed viscous shock 
is $\tilde{u}^B(t,x):=\tilde{u}(x+\sigma B(t))$ and the two viscous shock waves coincide at the initial time, i.e., $\tilde{u}(x)=\tilde{u}^B(0,x)$. Let $$d(t)=\mathbb E\|\tilde{u}(x)-\tilde{u}^B(t,x)\|_{L^\infty(\mathbb R)}, ~d(0)=0.$$
We have the following instability theorem. 
\begin{theorem}[Instability for shock wave]\label{instability}
	 $d(t)$ is an increasing function of $t$. Moreover, it holds that 
	\begin{equation}
	\lim_{t\to +\infty}d(t)=u_--u_+. 
	\end{equation}
	
\end{theorem}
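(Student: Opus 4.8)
The plan is to collapse the random quantity inside the expectation into a deterministic functional evaluated at the single random shift $\sigma B(t)$. Since the perturbed profile $\tilde u^B(t,x)=\tilde u(x+\sigma B(t))$ is merely the fixed profile $\tilde u$ translated by the random amount $\sigma B(t)$, for every fixed $\omega$ one has
\[
\|\tilde u(\cdot)-\tilde u^B(t,\cdot)\|_{L^\infty(\mathbb R)}=F(\sigma B(t)),\qquad F(a):=\sup_{x\in\mathbb R}\big|\tilde u(x)-\tilde u(x+a)\big|.
\]
Consequently $d(t)=\mathbb E\big[F(\sigma B(t))\big]$, and the whole theorem reduces to understanding the one-variable deterministic function $F$ together with the law of $\sigma B(t)$.

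Next I would establish the qualitative properties of $F$. From the profile equation \eqref{vs} with $s=0$, one integration gives $\nu\tilde u'=\tfrac12(\tilde u^2-u_-^2)$, so that $\tilde u'<0$ throughout the transition and $\tilde u$ is strictly decreasing from $u_-$ to $u_+$. This yields: (i) $F(0)=0$; (ii) $F$ is even, by the change of variables $x\mapsto x-a$; (iii) for $a>0$ the difference $\tilde u(x)-\tilde u(x+a)$ is positive and tends to $0$ as $x\to\pm\infty$, hence attains its supremum at some finite $x^{\ast}(a)$, and since $\partial_a\big(\tilde u(x)-\tilde u(x+a)\big)=-\tilde u'(x+a)>0$, the function $F$ is strictly increasing in $|a|$; (iv) $0\le F(a)\le u_--u_+$ for all $a$, with $F(a)\to u_--u_+$ as $|a|\to\infty$, which I would show by picking $x$ on the left plateau (where $\tilde u(x)\approx u_-$) with $x+a$ on the right plateau (where $\tilde u(x+a)\approx u_+$).

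Finally I would exploit the Gaussian scaling $\sigma B(t)\stackrel{d}{=}\sigma\sqrt{t}\,\mathcal N$ with $\mathcal N\sim N(0,1)$, so that, using that $F$ is even,
\[
d(t)=\mathbb E\big[F(\sigma\sqrt{t}\,|\mathcal N|)\big].
\]
For each fixed $\omega$ with $\mathcal N\neq0$, the map $t\mapsto \sigma\sqrt t\,|\mathcal N|$ is strictly increasing, so by (iii) the integrand $t\mapsto F(\sigma\sqrt t\,|\mathcal N|)$ is strictly increasing; taking expectations shows $d$ is increasing. For the limit, note $\sigma\sqrt t\,|\mathcal N|\to\infty$ almost surely as $t\to\infty$, whence (iv) gives $F(\sigma\sqrt t\,|\mathcal N|)\uparrow u_--u_+$ a.s.; since $F$ is bounded by $u_--u_+$, the monotone convergence theorem delivers $d(t)\to u_--u_+$.

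I expect the only genuinely delicate points to be the analysis of $F$ rather than the probabilistic steps: verifying that the supremum defining $F$ is attained (so that differentiating in $a$ legitimately yields strict monotonicity) and justifying the limit $F(a)\to u_--u_+$ through the plateau behavior of $\tilde u$ at $\pm\infty$. Once $F$ is shown to be bounded, even, strictly increasing in $|a|$, and to have the stated limit, the monotonicity and the limit of $d(t)$ follow immediately from monotone convergence together with the pathwise monotonicity of $\sigma\sqrt t\,|\mathcal N|$.
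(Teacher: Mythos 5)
Your proposal is correct and follows essentially the same route as the paper: both reduce $d(t)$ to the expectation of the deterministic shift functional $F$ evaluated at $\sigma\sqrt{t}\,\mathcal N$ via Gaussian scaling, use the monotonicity of $\tilde u$ to show $F$ is increasing in the shift magnitude with limit $u_--u_+$, and conclude by a convergence theorem (the paper uses dominated convergence after splitting the Gaussian integral over $z<0$ and $z>0$; you use monotone convergence after exploiting evenness of $F$). Your treatment of $F$ is slightly more explicit (evenness, attainment of the supremum, strict monotonicity via $\partial_a$), but the substance is identical.
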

\begin{remark}
	Theorem \ref{instability} indicates that the viscous shock wave is not stable under transport noise  perturbation. 
\end{remark}

\

We outline the proof of Theorem \ref{1d}. One of the main difficulties comes from the stochastic perturbation $u_xdB(t)$ in the whole space  $\mathbb R$ such that mild solution approach and some compact methods might not be available anymore. 
This difficulty is overcome by combining energy method, iteration approach and new $\mathbb L^p$ and martingale estimates. One of the advantages of energy method is that the stochastic integral term can be cancelled in the expectation, while it is not clear in the mild solution formula. Precisely speaking, 
we first apply a cut-off technique to control the nonlinear term $u^2$. Then we instead consider the cut-off equation \eqref{cutoff} whose global existence of strong solution is shown by the contraction mapping principle and energy method. It is noted that the quadratic variation for the derivative is understood in Krylov's theory \cite{Kr}.   
Once the global existence of \eqref{cutoff} is obtained,  the global existence of strong solution to the original equation \eqref{1.1} is proved through a new martingale estimate and stopping time method. 

Another difficulty is from the rarefaction wave.  The quadratic variation generates a bad term $\|\bar{u}_x\|^2\approx \frac{1}{2+t}$ which is not integrable over $[0,\infty)$.  By the energy method,  $\mathbb E \| \phi(t,\cdot)\|_{\mathbb L^2 (\mathbb R)}$ may increase with time $t$ (probably $\ln^\frac12 (2+t)$), while $ \| \phi(t,\cdot)\|_{\mathbb L^2 (\mathbb R)}$ is uniformly bounded for the deterministic Burgers equation \eqref{1.1}). To get the desired a priori estimates, the time-decay rate of $\mathbb E \| \phi_x(t,\cdot)\|_{\mathbb L^2 (\mathbb R)}$ for the derivative is necessarily acheived. Fortunately, we observe that for any $2<p<+\infty$, $\mathbb E \| \phi(t,\cdot)\|_{\mathbb L^p (\mathbb R)}$ decays with a rate by a new $L^p$ energy method and martingale estimates although the $L^2$ norm may increase. We further observe that $\mathbb E \| \phi(t,\cdot)\|_{\mathbb L^p (\mathbb R)}$ provides a  time-decay rate with some $0<\alpha<1$ in the energy inequality for $\phi_x$ so that the Area Inequality can be applied, where $f(t)=\mathbb E \| \phi_x(t,\cdot)\|^2_{\mathbb L^2 (\mathbb R)}$. The time-decay rate of  $\mathbb E \| \phi(t,\cdot)\|_{\mathbb L^\infty (\mathbb R)}$ in \eqref{rate} is derived by the celebrated Gargliado-Nirenberg inequality.  
Finally, the decay rate \eqref{rate for as} a.s. is obtained by combining the martingale estimates and the decay \eqref{rate} in expectations.

\

For the other works of the Burgers equation and conservation laws with stochastic force, see the interesting papers \cite{cdk,ddt,dg,dv1,dv2,dsxz2018,fn,gjs2015,gj2013,hm2016,hw2013,pz2020} and the references therein.

\

The rest of the paper is organized as follows. Sections \ref{pre}-\ref{global existence} are devoted to the proof of Theorem \ref{1d}. Among them, some preliminaries 
on the approximate rarefaction waves are given in section \ref{pre}, while the Area Inequality is proved in section \ref{area inequality}. Section \ref{a priori estimates} is devoted to the a priori estimates through a delicate $L^p, p\ge 2$ energy method and then the time-decay rates \eqref{rate} and \eqref{rate for as} of strong solution toward the rarefaction wave are given in section \ref{decay estimates}.   In Section \ref{global existence}, the global existence of strong solution is proved by combining the martingale estimates  and the global existence of the cut-off equation \eqref{cutoff} through the contraction mapping principle and stopping time method. Finally the instability of viscous shock wave is given in Section \ref{instability1}.

\section{Preliminaries}\label{pre}

In this section, some properties of the approximate rarefaction wave $\bar{u}(t,x)$ given in \eqref{app} are listed as follows. 
\begin{lemma}[\cite{MN92}, \cite{Ito96}]\label{ito1996}
	\begin{enumerate}[i)]
		\item $u_- < \bar{u}(t,x) < u_+,~ \bar{u}_x(t,x) > 0$ for $(t,x) \in \mathbb R^+ \times \mathbb R$,
		\item For all $p\in [1,\infty] $, there exist constants $C_p$ and $C_{p,d}$ such that for large $t$,  
		\begin{eqnarray}\begin{array}{lll}
			\| \bar{u}_x(t,\cdot) \|_{\mathbb L^p(\mathbb R)} &\leq & C_p \min (d, \ d^{1/p} t^{-1+1/p}),\\
			\| \bar{u}_{xx}(t,\cdot) \|_{\mathbb L^p(\mathbb R)} & \leq & C_p \min (d,\ d^{-(p-1)/2p} t^{-(1+(p-1)/p)}),\\
			\| \bar{u}_{xxx}(t,\cdot) \|_{\mathbb L^p(\mathbb R)} & \leq & C_{p,d} (2+t)^{-(1+(2p-1)/2p)},
			\end{array}
		\end{eqnarray}
		where $d=u_+-u_-$ is the strength of rarefaction wave. 
		\item For all $p\in(1,\infty]$, there is a constant $C_{p,d}$ such that for large $t$, 
		\begin{eqnarray}\label{difference}
				\| \bar{u}(t,\cdot) - u^r(t,\cdot) \|_{\mathbb L^p(\mathbb R)} \leq C_{p,d} t^{-(p-1)/2p},
		\end{eqnarray}
where $u^r(t,x)$ is the rarefaction wave given in \eqref{rarefaction}.
	\end{enumerate}
\end{lemma}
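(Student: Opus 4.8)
The plan is to solve the initial-value problem \eqref{app} explicitly by the method of characteristics and then read off all the stated estimates from the resulting implicit formula. Write $w(x):=\bar u(0,x)$, so that $w'(x)=\frac{u_+-u_-}{2}k(1+x^2)^{-1}=\frac{d}{\pi(1+x^2)}>0$ and $w:\mathbb R\to(u_-,u_+)$ is a smooth increasing bijection. Since the characteristics of $\bar u_t+\bar u\bar u_x=0$ are the straight lines $x=y+w(y)t$ along which $\bar u$ is constant, and since $w'>0$ prevents any two of them from crossing for $t\ge 0$, the problem has a unique global smooth solution determined implicitly by
\begin{equation}\label{char}
\bar u(t,x)=w(y),\qquad x=y+w(y)t .
\end{equation}
First I would establish (i): the range $u_-<\bar u<u_+$ is immediate since $\bar u=w(y)$ and $w$ takes values in $(u_-,u_+)$, while differentiating the constraint in \eqref{char} in $x$ gives $y_x=(1+w'(y)t)^{-1}$ and hence $\bar u_x=w'(y)y_x=\frac{w'(y)}{1+w'(y)t}>0$.

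For part (ii) I would differentiate \eqref{char} repeatedly to obtain the closed expressions
\begin{equation}\label{derivs}
\bar u_x=\frac{w'(y)}{1+w'(y)t},\quad
\bar u_{xx}=\frac{w''(y)}{(1+w'(y)t)^{3}},\quad
\bar u_{xxx}=\frac{w'''(y)(1+w'(y)t)-3\,w''(y)^2 t}{(1+w'(y)t)^{5}},
\end{equation}
and then compute every $\mathbb L^p$ norm by the change of variables $x\mapsto y$, under which $dx=(1+w'(y)t)\,dy$. Writing $b=\frac{dt}{\pi}$ so that $w'(y)t=\frac{b}{1+y^2}$, each norm reduces to an elementary rational integral, e.g.
\begin{equation}\label{Jp}
\|\bar u_{xx}\|_{\mathbb L^p}^p=\Big(\tfrac{2d}{\pi}\Big)^p\int_{\mathbb R}\frac{|y|^p(1+y^2)^{p-1}}{(1+y^2+b)^{3p-1}}\,dy,
\end{equation}
which I would estimate by splitting $\mathbb R$ at the corner scale $|y|\sim\sqrt b\sim\sqrt t$. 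The two entries of each $\min(d,\cdot)$ arise respectively from a crude amplitude bound valid for all $t$ (from $\|\bar u_x\|_{\mathbb L^\infty}\le w'(0)=d/\pi$ together with $\mathbb L^1$--$\mathbb L^\infty$ interpolation, giving the $d$ factor and binding for small $t$) and from this large-$t$ integral estimate. The same mechanism handles $\bar u_x$, whose $\mathbb L^p$ mass sits in the fan of width $\sim t$ where $\bar u_x\sim t^{-1}$, and $\bar u_{xxx}$.

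For part (iii) I would compare \eqref{char} with the exact fan $u^r$ region by region, splitting $\mathbb R$ into the two corner layers near $x=u_\pm t$ of width $\sim\sqrt t$, the fan interior, and the outer constant tails. In the interior $u^r=x/t$ and \eqref{char} gives $u^r-\bar u=y/t$; in the right tail $u_+-\bar u\sim\frac{d}{\pi y}$ with $y\approx x-u_+t$; and across each corner layer $|\bar u-u^r|$ is of order $t^{-1/2}$. Integrating the $p$-th power over each region using $dx=(1+w'(y)t)\,dy$, every contribution turns out to be of order $t^{-(p-1)/2}$, which yields \eqref{difference}; the restriction $p>1$ enters precisely through the convergence of the tail integral $\int_{\sqrt t}^\infty y^{-p}\,dy$.

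I expect the main obstacle to be the bookkeeping in the $\mathbb L^p$ integral estimates of part (ii): one must identify the decisive scale $|y|\sim\sqrt t$ in \eqref{Jp}, equivalently that the smoothing layer at each corner of the fan has $x$-width of order $\sqrt t$, and then track the exact powers of $d$ and $t$ separately. It is this corner scale, rather than the naive self-similar scaling $\partial_x^k\sim t^{-k}$ that is valid only in the smooth interior, which fixes the exponents in the $\bar u_{xx}$, $\bar u_{xxx}$ and difference estimates.
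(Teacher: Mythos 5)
The paper gives no proof of Lemma \ref{ito1996} at all --- it is quoted from \cite{MN92} and \cite{Ito96} --- so your proposal must be judged as a self-contained derivation, and its method (explicit characteristics $\bar u=w(y)$, $x=y+w(y)t$, change of variables $dx=(1+w'(y)t)\,dy$, splitting at the corner scale $|y|\sim\sqrt b$ with $b=dt/\pi$) is exactly the standard route taken in those references. Your formulas check out: $w'(y)=\frac{d}{\pi(1+y^2)}$ since $k=2/\pi$, the three derivative expressions are correct, part (i) is immediate, the $\bar u_x$ line of (ii) follows as you say (interior fan mass $\sim t^{-p}\cdot dt$ dominating the tail, plus $\mathbb L^1$--$\mathbb L^\infty$ interpolation for the $\min(d,\cdot)$), your integral for $\|\bar u_{xx}\|_{\mathbb L^p}^p$ is exactly right, the $\bar u_{xxx}$ line comes out to $C\,d^{-(2p-1)/(2p)}t^{-(1+(2p-1)/(2p))}$ matching the stated exponent, and part (iii) works region by region with $u^r-\bar u=y/t$ in the fan and the restriction $p>1$ entering through $\int_{\sqrt t}^\infty y^{-p}dy$, as you note.

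There is, however, one genuine failure point: your claim that the rational integrals ``yield the stated estimates'' is not true for the middle line of (ii). Carrying out your own integral, the inner part is $\lesssim b^{1-3p}\int_0^{\sqrt b}y^{3p-2}dy\sim b^{-(3p-1)/2}$ and the outer part is $\int_{\sqrt b}^\infty y^{-3p}dy\sim b^{-(3p-1)/2}$, so
\begin{equation*}
\| \bar u_{xx}(t,\cdot)\|_{\mathbb L^p(\mathbb R)}\ \sim\ C_p\, d^{-(p-1)/(2p)}\, t^{-\left(1+\frac{p-1}{2p}\right)},
\end{equation*}
whose time exponent $1+\frac{p-1}{2p}$ is strictly smaller than the stated $1+\frac{p-1}{p}$ for every $p>1$ (they agree only at $p=1$, where the exact computation $\|\bar u_{xx}\|_{\mathbb L^1}=\frac{2d}{\pi(1+b)}\sim 2t^{-1}$ confirms both). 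This is not a bookkeeping slip you can repair: at $p=\infty$ the supremum of $|\bar u_{xx}|=\frac{2d}{\pi}\frac{|y|(1+y^2)}{(1+y^2+b)^3}$ is attained at $|y|\sim\sqrt b$ and equals $\sim d^{-1/2}t^{-3/2}$, so the stated bound $d^{-1/2}t^{-2}$ is genuinely unattainable for the mollification \eqref{app}, whose tail is algebraic $(1+\xi^2)^{-1}$; the cited references use differently-tailed (parametrized) data, and the exponent as transcribed here appears to be a misprint --- note that your computation reproduces the stated $d$-power exactly, and the correct exponent follows the same ``$2p$-denominator'' pattern as the $\bar u_{xxx}$ line. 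Since the paper only ever invokes weaker consequences (e.g.\ $\|\bar u_{xx}\|^2\lesssim(2+t)^{-2}$ in Section \ref{a priori estimates}, which your sharp rate $t^{-5/2}$ comfortably implies), you should prove and state the corrected rate $1+\frac{p-1}{2p}$ rather than assert the printed one, which your own method refutes.
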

Next, we give the Gagliardo-Nirenberg (GN) inequality 
which reads as, for any $ 1\leq p\leq +\infty $ and integer $ 0\leq j < m $, 
\begin{equation}\label{gn}
\|{\nabla_x^j u}\|_{L^p(\mathbb R^n)} \leq C \|{\nabla_x^m u}\|_{L^r(\mathbb R^n)}^{\theta} \|{u}\|_{L^q(\mathbb R^n)}^{1-\theta},
\end{equation}
where $ \frac{1}{p} = \frac{j}{n} + \left(\frac{1}{r} -\frac{m}{n} \right) \theta + \frac{1}{q} \left(1-\theta \right) $, $ \frac{j}{m} \leq \theta \leq 1$. Based on the GN inequality, we also list an interesting interpolation inequality, which will be used later. 
\begin{lemma}[\cite{kawashima}]\label{ii} It holds that, 
	\begin{align}\label{ii1}
	\|u\|_{L^p}\leq C(p,q,n)\|\nabla(|u|^{\frac{p}{2}})\|^{\frac{2\gamma}{1+\gamma p}}_{L^2}\|u\|^{\frac{1}{1+\gamma p}}_{L^q}
	\end{align}
	for $2\leq p<\infty$ and $1\leq q\leq p$, where $\gamma=(n/2)(1/q-1/p)$, and $C(p,q,n)$ is a positive constant.
\end{lemma}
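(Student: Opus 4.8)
The plan is to reduce \eqref{ii1} to the classical Gagliardo--Nirenberg inequality \eqref{gn} through the power substitution $v:=|u|^{p/2}$. The virtue of this substitution is that both Lebesgue norms in \eqref{ii1} collapse to pure $L^s$ norms of $v$: from $\int|u|^p=\int v^2$ and $\int|u|^q=\int v^{2q/p}$ one reads off
\[
\|u\|_{L^p}=\|v\|_{L^2}^{2/p},\qquad \|u\|_{L^q}=\|v\|_{L^{2q/p}}^{2/p},
\]
while the chain rule $\nabla v=\tfrac{p}{2}\,|u|^{p/2-1}\,\mathrm{sgn}(u)\,\nabla u$ identifies $\|\nabla v\|_{L^2}$ with the gradient term $\|\nabla(|u|^{p/2})\|_{L^2}$ already present on the right of \eqref{ii1}.

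First I would apply \eqref{gn} to $v$ with $j=0$, $m=1$, gradient measured in $L^2$ (so $r=2$), target exponent $2$ and lower exponent $2q/p$, giving $\|v\|_{L^2}\le C\|\nabla v\|_{L^2}^{\theta}\|v\|_{L^{2q/p}}^{1-\theta}$. The scaling constraint of \eqref{gn}, which here reads $\tfrac12=(\tfrac12-\tfrac1n)\theta+\tfrac{p}{2q}(1-\theta)$, pins down $\theta$; a one-line computation yields
\[
\theta=\frac{\gamma p}{1+\gamma p},\qquad 1-\theta=\frac{1}{1+\gamma p},\qquad \gamma=\frac{n}{2}\Big(\frac1q-\frac1p\Big).
\]
Because $q\le p$ forces $\gamma\ge0$, this $\theta$ lies in $[0,1)$, so the admissibility range $\tfrac{j}{m}\le\theta\le1$ is respected. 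Inserting the three norm identities and raising to the power $2/p$ then reproduces \eqref{ii1} exactly, the gradient exponent becoming $\tfrac{2\theta}{p}=\tfrac{2\gamma}{1+\gamma p}$ and the low-order exponent becoming $1-\theta=\tfrac{1}{1+\gamma p}$.

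The main obstacle is the regularity bookkeeping hidden in the substitution rather than the algebra. The chain rule for $|u|^{p/2}$ is not valid verbatim for a generic Sobolev-class $u$, so I would first establish \eqref{ii1} for $u\in C_c^\infty(\mathbb R^n)$ and then pass to the general case by density, regularizing the singular factor $|u|^{p/2-1}$ via $(\varepsilon^2+|u|^2)^{p/4}$ and letting $\varepsilon\to0$ with the aid of dominated convergence. A secondary point to watch is the range of the lower exponent: when $q<p/2$ one has $2q/p<1$, so $\|v\|_{L^{2q/p}}$ is no longer a genuine norm and one must appeal to the form of \eqref{gn} valid for exponents in $(0,1)$ (reading $\|\cdot\|_{L^s}=(\int|\cdot|^s)^{1/s}$ as a functional); for $q\ge p/2$ the whole argument stays inside the classical statement \eqref{gn}.
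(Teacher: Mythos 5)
The paper gives no proof of this lemma --- it is simply quoted from \cite{kawashima} --- so there is no internal argument to compare against; your derivation is correct and is in fact the standard one used in that reference. The substitution $v=|u|^{p/2}$, the exponent computation $\theta=\frac{\gamma p}{1+\gamma p}$ from the scaling constraint $\frac12=\left(\frac12-\frac1n\right)\theta+\frac{p}{2q}(1-\theta)$, and the two technical caveats you flag (justifying the chain rule by regularizing $|u|^{p/2}$ and passing to the limit, and the fact that $2q/p<1$ when $q<p/2$ requires the form of Gagliardo--Nirenberg valid for sub-unit exponents) are all accurate and make the argument complete.
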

\section{Area inequality}\label{area inequality}
 This section is devoted to the Area Inequality, which plays a key role to get the convergence rate of strong solution toward the rarefaction wave for the stochastic Burgers equation \eqref{stochastic}. 
 \begin{proof}[\bf Proof of Theorem \ref{area}]
We first prove \eqref{l1}, i.e., 
	\begin{equation}\label{decayrate}
	f(t) \le 2\sqrt{C_0C_1}(1+t)^{\frac{\beta-\alpha}{ 2}}\ln^{\frac{\gamma}{2}}(1+t),~t>>1,
	\end{equation}
under the conditions 
 	\begin{eqnarray}\label{cor1}
 	f^\prime(t)  \leq  C_0(1+t)^{-\alpha},
 	\end{eqnarray}
 	and \begin{eqnarray}\label{cor2}
 \int_0^t f(s)ds  \le   C_1(1+t)^\beta\ln^\gamma (1+t), ~\gamma\ge 0,
 	\end{eqnarray}
 where  $0\le\beta<\alpha$ and $\alpha+\beta<2$. The proof is provided by the way of contradiction.

  If the inequalaity \eqref{decayrate} does not hold, then there exists a sequence  $ \{ t_n \}_{n=1}^{\infty}$ with $ t_n \uparrow \infty $ such that $ f(t_n) > C_2(1+t_n)^{\frac{\beta-\alpha}{ 2}}\ln^{\frac{\gamma}{2}}(1+t_n)$, where $C_2:=2\sqrt{C_0C_1}$. Note that   the inequality \eqref{cor1} in the interval $[0,t_n]$ is equivalent to 
  \begin{equation}
  \frac{df(\tilde{\tau})}{d\tilde{\tau}}\ge -C_0(1+t_n-\tilde{\tau})^{-\alpha}, ~f(\tilde{\tau})|_{\tilde{\tau}=0}=f(t_n), ~\tilde{\tau}\ge 0,
  \end{equation}
  where $\tilde{\tau}=t_n-\tau$.  Then we construct a function $g_n(\tilde{\tau})$ satisfying 
  \begin{equation}\label{ode}
  \frac{dg_n(\tilde{\tau})}{d\tilde{\tau}}= -C_0(1+t_n-\tilde{\tau})^{-\alpha}, ~g_n(\tilde{\tau})|_{\tilde{\tau}=0}=C_2(1+t_n)^{\frac{\beta-\alpha}{ 2}}\ln^{\frac{\gamma}{2}}(1+t_n), ~\tilde{\tau}\ge 0.
  \end{equation}
It is obvious that $f(\tau)\ge g_n(\tau)$ for any $\tau\in [0,t_n]$ since $f(\tilde{\tau})|_{\tilde{\tau}=0}\ge g_n(\tilde{\tau})|_{\tilde{\tau}=0}$, see Figure 1 below. 

\begin{center}
	\includegraphics[scale=0.7]{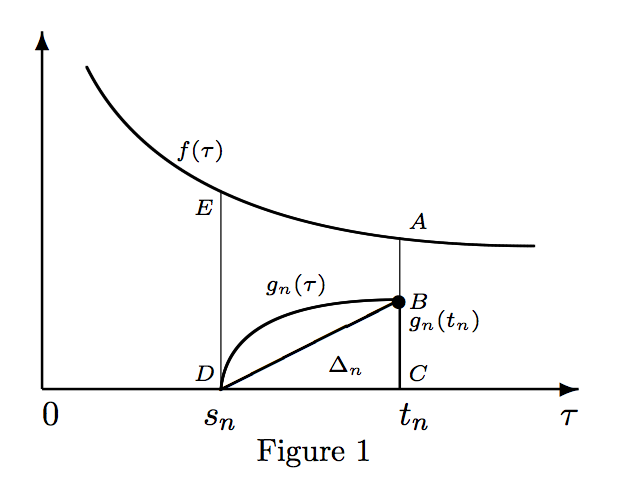}\\[0.1cm]
\end{center}
  The ODE \eqref{ode} is exactly 
  a backward ordinary differential equation starting from $t_n$, i.e., 
  \begin{equation}\label{o1}\left\{\begin{array}{l}
  \displaystyle\frac{dg_n(\tau)}{d\tau}=C_0(1+\tau)^{-\alpha}, ~0\le \tau \le t_n,\\
 g_n(t_n):=C_2(1+t_n)^{\frac{\beta-\alpha}{ 2}}\ln^{\frac{\gamma}{2}}(1+t_n). 
  \end{array}
  \right.
  \end{equation}
A direct computation gives the formula of $g_n(\tau)$ for $\alpha\ne 1$ on $0<\tau\le t_n$, 
\begin{equation}\label{o1-1}
g_n(\tau)=g_n(t_n)-C_0\int_{\tau}^{t_n}(1+s)^{-\alpha}ds=g_n(t_n)-\frac{C_0}{1-\alpha}[(1+t_n)^{1-\alpha}-(1+\tau)^{1-\alpha}].
\end{equation}
Due to $0\le \beta<\alpha$ and $ \alpha+\beta<2$, $\frac{\beta-\alpha}{2}<1-\alpha$ is always true. 
Taking $\tau=\frac{t_n}{2}$ gives that 
\begin{equation}\label{3.8}
g_n(\frac{t_n}{2})=g_n(t_n)-\frac{C_0}{1-\alpha}[(1+t_n)^{1-\alpha}-(1+\frac{t_n}{2})^{1-\alpha}]<0, ~~\mbox{as}~~t_n>>1,
\end{equation}
Since $g_n(\tau)$ is monotonically increasing, there exists a unique $s_n\in (\frac{t_n}{2},t_n)$ such that $g_n(s_n)=0$. 
Taking $\tau=s_n$ in \eqref{o1-1},  it follows  from the mean value theorem that 
\begin{equation}\begin{array}{lll}\label{o2}
s_n&=&[(1+t_n)^{1-\alpha}-\frac{1-\alpha}{C_0}g_n(t_n)]^{\frac{1}{1-\alpha}}-1\\
&=&(1+t_n)[1-\frac{1-\alpha}{C_0}g_n(t_n)(1+t_n)^{\alpha-1}]^{\frac{1}{1-\alpha}}-1\\
&=&(1+t_n)[1-\frac{1}{C_0}g_n(t_n)(1+t_n)^{\alpha-1}(1-\xi_{n})^{\frac{\alpha}{1-\alpha}}]-1\\
&\le&t_n-\frac{2}{3C_0}g_n(t_n)(1+t_n)^{\alpha},
\end{array}
\end{equation}
where $\xi_{n}\in (\min\{0,1-(\frac23)^{\frac{\alpha}{1-\alpha}}\},\max\{0,1-(\frac23)^{\frac{\alpha}{1-\alpha}}\})$ is a constant, and we have used the fact that
 \begin{equation}\label{inf}
g_n(t_n)(1+t_n)^{\alpha-1}=C_2(1+t_n)^{\frac{\alpha+\beta}{2}-1}\ln^\frac{\gamma}{2}(1+t_n)=o(1),~~\mbox{as}~~t_n\to \infty.
\end{equation} 
Then we have 
	\begin{equation}\label{o3}
	t_n-s_n\ge \frac{2}{3C_0}g_n(t_n)(1+t_n)^{\alpha}.
	\end{equation}

Note that the curve $g_n(\tau)$ is concave due to the fact that the derivative of $(1+\tau)^{-\alpha}$ in \eqref{o1} is negative. Thus the region $S_{ACDE}$ surrounded by the segments $\overline{AC}$, $\overline{CD}$,  $\overline{DE}$ and the curve $\widetilde{EA}$ should cover the triangle $\triangle${BCD} (this is why we call Theorem \ref{area} by Area Inequality), see Figure 1. 
From \eqref{cor2} and \eqref{o3}, a direct computation gives
	\begin{equation}\label{area1}
	\begin{array}{ll}
\displaystyle	~~~~C_1(1+t_n)^\beta\ln^\gamma (1+t_n)\ge \int_0^{t_n}f(\tau)d\tau\ge  \int_{s_n}^{t_n}f(\tau)d\tau=\mbox{Area of $S_{ACDE}$}\\
\displaystyle	\ge \mbox{Area of $S_{\Delta BCD}$}=\frac12(t_n-s_n)g_n(t_n)
\ge 
 \frac{C_2^2}{3C_0}(1+t_n)^\beta\ln^\gamma (1+t_n), 
		\end{array}
	\end{equation}
	which impies $3\ge 4$ due to $C_2^2=4C_0C_1$. This is a contradiction and thus the inequality \eqref{decayrate} holds for $\alpha\ne 1$. 

 	For the case $\alpha=1$, the same argument implies that there exists a unique $s_n\in (\frac{t_n}{2},t_n)$ such that $g_n(s_n)=0$. Then the formula of $g_n(\tau)$ gives 
 	\begin{equation}\label{o5}
 	g_n(t_n)=g_n(s_n)+C_0\int_{s_n}^{\tau}(1+s)^{-1}ds=C_0\ln\frac{1+t_n}{1+s_n}.
 	\end{equation}
 Note that $g_n(t_n)=C_2(1+t_n)^{\frac{\beta-1}{2}}
 	\ln^{\frac{\gamma}{2}}(1+t_n)=o(1)$, 
 	we get 
 	\begin{equation}\begin{array}{lll}\label{o2-1}
 	s_n=(1+t_n)e^{-\frac{g_n(t_n)}{C_0}}-1\approx (1+t_n)(1-\frac{g_n(t_n)}{C_0})-1=t_n-\frac{1}{C_0}g_n(t_n)(1+t_n),
 	\end{array}
 	\end{equation}
 	which yields the inequality \eqref{o3}. As in \eqref{area1},  the same argument  implies \eqref{decayrate}.
 	
 	Next we consider the case that $\beta=\gamma=0$ and $0<\alpha\le 2$, i.e., 
 	\begin{equation}\label{integral}
 	\int_0^\infty f(t)dt<+\infty,
 	\end{equation}
and we expect a better result \eqref{lemma2}. If \eqref{lemma2} does not hold, then there exist a small constant $\epsilon$ and a sequence  $ \{ t_n \}_{n=1}^{\infty}$ with $ t_n \uparrow \infty $ such that $ f(t_n) > \epsilon (1+t_n)^{-\frac{\alpha}{ 2}}$. In the same way as in \eqref{o1} and \eqref{3.8}, we can construct a function $g_n(\tau)$ with $g_n(t_n):=\epsilon (1+t_n)^{-\frac{\alpha}{ 2}}$ and $g_n(s_n)=0$, $s_n\in (\frac{t_n}{2},t_n)$. Note that $g_n(t_n)(1+t_n)^{\alpha-1}
=\epsilon (1+t_n)^{\frac{\alpha}{2}-1}$ is small as $0<\alpha\le 2$ so that the inequality \eqref{o3} still holds for $t_n-s_n$.  
 By the same argument as in \eqref{area1}, we  have as $t_n>>1$, 
 	\begin{equation}\label{o4}
 	\int_{s_n}^{t_n}f(\tau)d\tau\ge \frac12 g_n(t_n)(t_n-s_n)\ge \frac{2}{3C_0} (g_n(t_n))^2(1+t)^{\alpha}=\frac{2\epsilon^2}{3C_0}.
 	\end{equation}
Note that the left hand side of \eqref{o4} 
tends to zero as $t_n\to +\infty$ due to \eqref{integral}, while the right one is a fixed constant. This is a contradiction and hence \eqref{lemma2} holds, i.e., $f(t)=o(t^{-\frac{\alpha}{2}})$, $t>>1$.
 	 	 	
 	
To prove Theorem \ref{area}, it remains to show that the index in the decay rate \eqref{lemma2}  is optimal. For this, we only need to prove that for any $\epsilon>0$, there exist a sequence  $ \{ t_n \}_{n=1}^{\infty}$ with $ t_n \uparrow \infty $ and  a function $g(t)$ satisfying all conditions of Theorem  \ref{area} for $\beta=\gamma=0$ such that $g(t_n)= (1+t_n)^{-\frac{\alpha}{2}-\epsilon}$. The function $g(t)$ is constructed as follows.  
	
Let $s_n=e^n$. As in \eqref{o1}, we consider the following ordinary differential equation on $[s_n,t_n]$, 
	\begin{eqnarray}\label{o6}
	\ g_n^\prime(\tau) =C_0(1+\tau)^{-\alpha},~	g_n(t_n) = (1+t_n)^{-\frac{\alpha}{2}-\epsilon}, \ g_n(s_n) = 0,
	\end{eqnarray}
which gives
\begin{equation}\label{o7}
t_n-s_n\approx \frac{1}{C_0}g_n(t_n)(1+t_n)^{\alpha} =\frac{1}{C_0}(1+t_n)^{\frac{\alpha}{2}-\epsilon},~~n>>1.
\end{equation} 
Let $g_n(t)$ monotonically decrease to zero on $[t_n,z_n]$ where $z_n$ is close to $t_n$ so that $z_n<s_{n+1}$ and 
\begin{equation}\label{o8}
\sum_{n=1}^\infty \int_{t_n}^{z_n}g_n(t)dt<\infty,
\end{equation}
see Figure 2 below.
	\begin{center}
		\includegraphics[scale=0.8]{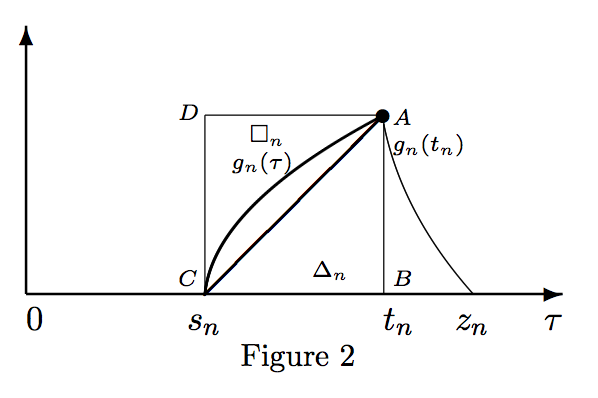}\\[0.1cm]
	\end{center}
Define
\begin{equation}\label{g}
g(t)=\left\{\begin{array}{ll}
g_n(t), t\in [s_n,z_n],\\
0, ~\mbox{otherwise}. 
\end{array}
\right.
\end{equation}
It is obvious to see that $g'(t)=C_0(1+t)^{-\alpha}$ on $[s_n,t_n]$ and $g'(t)\le 0$ as $t\notin [s_n,t_n]$ so that \eqref{lemma1} is satisfied. 

On the other hand, the integral $\int_{s_n}^{t_n}g(\tau)d\tau$ is less than the area of the rectangle $\square_nABCD$ with the width $t_n-s_n$ and the height $g_n(t_n)=(1+t_n)^{-\frac{\alpha}{2}-\epsilon}$, i.e., 
\begin{equation}\label{o9}
\int_{s_n}^{t_n}g(\tau)d\tau\le (t_n-s_n)g_n(t_n)\le\frac{2}{C_0}(1+t_n)^{-2\epsilon}\le\frac{2}{C_0}s_n^{-2\epsilon}\le  Ce^{-2n\epsilon},
\end{equation}
which implies, together with \eqref{o8},
\begin{equation}
\int_0^\infty g(t)dt=\sum_{n=1}^\infty\int_{s_n}^{t_n}g(t)dt+\sum_{n=1}^\infty\int_{t_n}^{z_n}g(t)dt\le C\sum_{n=1}^\infty e^{-2n\epsilon}+C<\infty. 
\end{equation}
%
In particular, $g(t_n) = (1+t_n)^{-\frac{\alpha}{2}-\epsilon}$. 
Thus the index in \eqref{lemma2} is optimal. Therefore Theorem \ref{area} is completed. 
\end{proof}

 
 


\section{The a priori estimates}\label{a priori estimates}
First we introduce a useful lemma concerning the regularity of strong solution to \eqref{stochastic}. 
\begin{lemma}[Theorem 4.10 and 5.1 in  \cite{Kr}]\label{rs}
	Let $f\in L^2([0,T]\times\Omega, \mathbb H^1(\mathbb R))$ and $g\in L^2([0,T]\times\Omega, \mathbb H^2(\mathbb R))$ be adapted process, then for  any $\phi_0\in H^2({\mathbb R})$ and any $T>0$, the stochastic equation 
	\begin{eqnarray} \label{random source}
	\left\{
	\begin{array}{ll}
	d \phi = \mu\phi_{xx}dt+\sigma \phi_xdB(t)+fdt+ gdB(t), \\
	\phi(0)=\phi_0,\\
	\end{array}
	\right.
	\end{eqnarray}
	with $\sigma^2< 2\mu$, admits 
	a  unique strong solution in 
	$$
W_T:=\Big\{\phi\in C\left( (0,T) ;\mathbb{H}^2(\mathbb{R}) \right)\cap  \mathbb L^2\left( (0,T);  \mathbb{H}^3(\mathbb{R}) \right)\Big\}.
	$$
\end{lemma}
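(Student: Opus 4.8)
The plan is to read \eqref{random source} as a linear second-order SPDE with constant coefficients carrying a first-order (transport) noise, and to obtain existence, uniqueness and the stated regularity directly from Krylov's $L^p$-theory for stochastic parabolic equations; the cited Theorems~4.10 and 5.1 of \cite{Kr} provide the solvability together with the time-continuity embedding once the structural hypotheses are verified. The only condition that must be checked is the stochastic (super-)parabolicity condition: writing the equation as $d\phi=(\mu\phi_{xx}+f)\,dt+(\sigma\phi_x+g)\,dB$, the effective diffusion coefficient is $2\mu-\sigma^2$, so the hypothesis $\sigma^2<2\mu$ is exactly the requirement $2\mu-\sigma^2>0$. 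Combined with the adaptedness and the integrability $f\in\mathbb L^2([0,T]\times\Omega;\mathbb H^1)$, $g\in\mathbb L^2([0,T]\times\Omega;\mathbb H^2)$ and $\phi_0\in\mathbb H^2(\mathbb R)$, Krylov's theorem places the solution in his stochastic Sobolev space of order three, which by the embedding of Theorem~4.10 in \cite{Kr} is continuous in time with values in $\mathbb H^2$ and square-integrable in time with values in $\mathbb H^3$, that is, precisely $W_T$.

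To make the argument self-contained I would instead run an energy/Galerkin scheme: construct approximate solutions (finite-dimensional projections, or a Picard iteration treating $f\,dt+g\,dB$ as a source), then close uniform a priori estimates at the levels $k=0,1,2$ by applying It\^o's formula to $\mathbb E\|\pa_x^k\phi\|^2$. At the base level $k=0$ the martingale contribution of the transport noise vanishes, since $2\sigma\int_{\mathbb R}\phi\phi_x\,dx=\sigma\int_{\mathbb R}(\phi^2)_x\,dx=0$, while its quadratic variation produces the It\^o correction $\sigma^2\|\phi_x\|^2$; together with the dissipation $-2\mu\|\phi_x\|^2$ from $\mu\phi_{xx}$ this yields the crucial good term
\[
\frac{d}{dt}\mathbb E\|\phi\|^2=-(2\mu-\sigma^2)\,\mathbb E\|\phi_x\|^2+2\mathbb E\langle\phi,f\rangle+2\sigma\,\mathbb E\langle\phi_x,g\rangle+\mathbb E\|g\|^2 .
\]
The same computation at level $k=2$ produces $-(2\mu-\sigma^2)\|\phi_{xxx}\|^2$, which is the dissipation responsible for the $\mathbb H^3$ bound; the drift forcing pairing $\langle\phi_{xx},\pa_x^2 f\rangle$, interpreted after one integration by parts as $-\langle\phi_{xxx},f_x\rangle$, needs only $f\in\mathbb H^1$, while the noise quadratic variation is $\|\sigma\phi_{xxx}+g_{xx}\|^2$. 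Young's inequality absorbs the cross terms $\epsilon\|\phi_{xxx}\|^2$ into the dissipation, leaving a forcing controlled by $\|f_x\|^2$ and $\|g_{xx}\|^2$. This bookkeeping also explains the asymmetric hypotheses $f\in\mathbb H^1$ versus $g\in\mathbb H^2$: the noise coefficient must carry two derivatives because it enters the top-order estimate through its quadratic variation, not through a mere duality pairing. Summing the three levels and applying Gronwall gives the uniform $C((0,T);\mathbb H^2)\cap\mathbb L^2((0,T);\mathbb H^3)$ bound, and a standard compactness/limit passage yields existence.

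Uniqueness follows from the same energy identity applied to the difference $w=\phi_1-\phi_2$ of two solutions, which solves the homogeneous equation: It\^o's formula gives $\frac{d}{dt}\mathbb E\|w\|^2=-(2\mu-\sigma^2)\,\mathbb E\|w_x\|^2\le 0$ with $w(0)=0$, whence $w\equiv 0$.

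The main obstacle is the interplay between the It\^o correction and the viscous dissipation at top order: the quadratic variation of the transport noise contributes a positive multiple $\sigma^2\|\pa_x^{k+1}\phi\|^2$ that competes directly with the dissipation $-2\mu\|\pa_x^{k+1}\phi\|^2$, so the whole scheme closes only because $\sigma^2<2\mu$ keeps the net coefficient strictly negative. Making rigorous sense of this quadratic variation at the level of $\phi_{xxx}$ — the point the paper defers to Krylov's framework — together with justifying the integration by parts and the vanishing-martingale identity on the Galerkin approximants rather than on the not-yet-constructed limit, is the delicate part of a fully self-contained proof.
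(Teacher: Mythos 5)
Your proposal is correct, and its first paragraph is exactly what the paper does: the lemma is stated as a quoted result (Theorems 4.10 and 5.1 of Krylov's notes) with no proof given in the paper at all, the implicit justification being precisely the verification you spell out, namely that $\sigma^2<2\mu$ is the stochastic superparabolicity condition $2\mu-\sigma^2>0$ for the pair of operators $(\mu\partial_{xx},\sigma\partial_x)$, after which Krylov's solvability and embedding theorems deliver membership in $W_T$. Your second and third paragraphs go beyond the paper: the self-contained energy/Galerkin scheme is sound, the level-$k$ identities are computed correctly (the transport martingale term $2\sigma\langle\partial_x^k\phi,\partial_x^{k+1}\phi\rangle$ vanishes, the It\^o correction contributes $\sigma^2\|\partial_x^{k+1}\phi\|^2$ against the dissipation $-2\mu\|\partial_x^{k+1}\phi\|^2$), and your explanation of the asymmetric hypotheses ($f\in\mathbb H^1$ entering by duality versus $g\in\mathbb H^2$ entering through the quadratic variation) is exactly the structural reason the regularity assumptions are what they are. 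What the paper's citation buys is avoiding the delicate point you honestly flag at the end — making rigorous sense of the top-order quadratic variation and the limit passage — which is precisely what Krylov's analytic framework packages; what your sketch buys is an explanation of \emph{why} the condition $\sigma^2<2\mu$ is not merely sufficient for quoting a theorem but is the exact coercivity margin that closes every estimate in the scheme.
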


Let $\phi(x,t) = u(t,x) - \bar u(t,x) $, then the stochastic Burgers equation \eqref{stochastic} can be  reduced into the following perturbation equation
\begin{eqnarray}\label{Perturbation}
\left\{
       \begin{array} {ll}
       	d\phi + ( \phi \bar u )_x dt +  \frac{1}{2} (\phi^2 )_x dt = \mu \phi_{xx} dt + \mu \bar u_{xx} dt + \sigma( \phi_x  + \bar u_x ) dB(t), \quad \mbox{in} ~~ \mathbb{R}\times [0,\infty), \\
           \phi|_{t=0}(x)=\phi_0(x), \quad \mbox{in} ~~\mathbb{R}.\\
              \end{array}
\right.	
\end{eqnarray}
The global existence of strong solution to  \eqref{Perturbation} will be proved in Section \ref{global existence} through a cut-off equation \eqref{cutoff}. The rest of this section is devoted to the a priori estimates of solutions in the following solution space 
	\[\label{X}
	X_T:= \left\{ \phi\in C\left( (0,T) ;\mathbb{H}^1(\mathbb{R}) \right), \phi_x\in \mathbb L^2\left( (0,T);  \mathbb{H}^1(\mathbb{R}) \right) \right\}\supset W_T.\]
 The norm $\|\cdot\|_T$ is defined as 
	\[
	\| \phi \|_T\triangleq \Bigg(\mathbb E\sup_{0\leq s\leq T}  \| \phi(s) \|_{\mathbb{H}^1(\mathbb{R})}^2 + \mathbb E \int_0^T \| \phi_x (s) \|_{\mathbb{H}^1(\mathbb{R})}^2 ds\Bigg)^{\frac12}.
	\]
We have
\begin{lemma}\label{lemma4.1}
Assume $\sigma^2 < 2\mu $ and $\phi(t,x)\in X_T$ is the strong solution of \eqref{Perturbation}, it holds that
\begin{eqnarray}	\label{3.14}
\begin{array}{lll}
&&	\displaystyle\| \phi(t) \|^2 + \int_0^t \| \phi_x \|^2 ds + \int_0^t \int_\mathbb R \phi^2 \bar u_x dx dt\\[3mm]
& \leq &\displaystyle C_1\left( \| \phi_0 \|^2+ \ln (2+t)\right) +C_2 \int_0^t \int_\mathbb R \phi  \bar u_x dxdB(t).
\end{array}
\end{eqnarray}

\end{lemma}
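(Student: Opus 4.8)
The plan is to apply It\^o's formula to the $L^2$-energy functional $\|\phi(t)\|^2=\int_{\mathbb R}\phi^2\,dx$ along the strong solution of \eqref{Perturbation}, then integrate by parts in space and estimate the resulting terms. The regularity needed to justify the It\^o expansion of this functional (including the quadratic-variation contribution from the derivative noise) is supplied by Lemma \ref{rs}: the strong solution lies in $W_T\subset X_T$, so $\phi$ and $\phi_x$ decay at spatial infinity and every boundary term from integration by parts vanishes. Writing $d\phi=A\,dt+\Sigma\,dB(t)$ with $A=-(\phi\bar u)_x-\tfrac12(\phi^2)_x+\mu\phi_{xx}+\mu\bar u_{xx}$ and $\Sigma=\sigma(\phi_x+\bar u_x)$, It\^o's formula gives
\begin{equation*}
d\|\phi\|^2=2\int_{\mathbb R}\phi A\,dx\,dt+\int_{\mathbb R}\Sigma^2\,dx\,dt+2\int_{\mathbb R}\phi\Sigma\,dx\,dB(t).
\end{equation*}

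First I would compute the drift $2\int_{\mathbb R}\phi A\,dx$ term by term. The transport term yields $-\int_{\mathbb R}\phi^2\bar u_x\,dx$ after two integrations by parts; since $\bar u_x>0$ by Lemma \ref{ito1996}, this is exactly the nonnegative quantity I want to keep on the left-hand side. The crucial structural point is that the nonlinear term drops out, $\int_{\mathbb R}\phi(\phi^2)_x\,dx=\tfrac23\int_{\mathbb R}(\phi^3)_x\,dx=0$, so no cubic quantity survives. The viscous term contributes $-2\mu\|\phi_x\|^2$, leaving the inhomogeneous cross term $2\mu\int_{\mathbb R}\phi\bar u_{xx}\,dx=-2\mu\int_{\mathbb R}\phi_x\bar u_x\,dx$.

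Next I would handle the stochastic contributions. The quadratic variation expands as $\int_{\mathbb R}\Sigma^2\,dx=\sigma^2\|\phi_x\|^2+2\sigma^2\int_{\mathbb R}\phi_x\bar u_x\,dx+\sigma^2\|\bar u_x\|^2$, and the martingale term reduces, via $\int_{\mathbb R}\phi\phi_x\,dx=0$, to $2\sigma\int_{\mathbb R}\phi\bar u_x\,dx\,dB(t)$, which is precisely the stochastic integral kept on the right of \eqref{3.14}. Collecting the $\|\phi_x\|^2$ terms gives $-(2\mu-\sigma^2)\|\phi_x\|^2$, and here the hypothesis $\sigma^2<2\mu$ is essential: it keeps this coefficient strictly positive so that the diffusion still dominates the noise-induced dispersion and produces a genuine dissipation term.

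Integrating in time over $[0,t]$ and moving the good terms to the left, it remains to absorb the cross terms $-2\mu\int_{\mathbb R}\phi_x\bar u_x\,dx$ and $2\sigma^2\int_{\mathbb R}\phi_x\bar u_x\,dx$ together with the source $\sigma^2\|\bar u_x\|^2$. I would control the cross terms by Young's inequality, returning a small multiple (say $\tfrac{2\mu-\sigma^2}{2}\|\phi_x\|^2$) to the left and leaving a multiple of $\|\bar u_x\|^2$. The place where the logarithm enters — and the main obstacle distinguishing this from the deterministic problem — is the time integral of $\|\bar u_x\|^2$: by Lemma \ref{ito1996} one has $\|\bar u_x(s)\|^2\le C(2+s)^{-1}$ for large $s$, which is \emph{not} integrable, so $\int_0^t\|\bar u_x(s)\|^2\,ds\le C\ln(2+t)$. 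This non-integrable quadratic-variation source is exactly why the energy may grow logarithmically rather than stay bounded. Finally, dividing through by $\min\{1,\tfrac{2\mu-\sigma^2}{2}\}$ and relabeling constants — using $\int_0^t\int_{\mathbb R}\phi^2\bar u_x\,dx\,ds\ge0$ to retain it on the left — yields \eqref{3.14} with suitable $C_1,C_2$.
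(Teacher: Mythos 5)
Your proposal is correct and follows essentially the same route as the paper: an It\^o expansion of $\|\phi\|^2$ (the paper phrases it as multiplying \eqref{Perturbation} by $\phi$ and integrating), cancellation of the cubic term and of $\int_{\mathbb R}\phi\phi_x\,dx$ in the martingale part, retention of the good term $\int_{\mathbb R}\phi^2\bar u_x\,dx$ from the transport structure, absorption of the cross terms by Young's inequality using $\sigma^2<2\mu$, and the logarithm arising from $\int_0^t\|\bar u_x(s)\|^2\,ds\le C\ln(2+t)$ via Lemma \ref{ito1996}. The bookkeeping of the quadratic-variation and dissipation coefficients matches the paper's estimate \eqref{gronwall}, so no gap remains.
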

\begin{proof}
Multiply   (\ref{Perturbation}) by  $\phi$, it holds in the sense of It\^{o} integral that  
\begin{eqnarray}
	\phi d \phi + \phi^2 \bar u _x dt + \phi \phi_x \bar u dt + \phi^2 \phi _x dt = \mu \phi \phi _{xx} dt + \mu \phi \bar u_{xx} dt +  \sigma\phi (\phi_x + \bar u_x ) dB(t), 
\end{eqnarray}
i.e., 
\begin{eqnarray}
\begin{array}{lll}
&&\displaystyle\int_0^t\phi(s) d \phi(s) + \int_0^t\phi^2 \bar u _x ds + \int_0^t\phi \phi_x \bar u ds + \int_0^t\phi^2 \phi _x ds\\& =&\displaystyle \int_0^t\mu \phi \phi _{xx} ds +\int_0^t \mu \phi \bar u_{xx} ds +  \sigma\int_0^t\phi (\phi_x + \bar u_x ) dB(s).
\end{array}
\end{eqnarray}
Using It\^{o} formula and stochastic Fubini theorem yield that 
\begin{eqnarray}\label{4.6}
\begin{aligned}
&	\frac{1}{2} d \| \phi \|^2 -\frac12 d \langle \| \phi \|^2 \rangle_t + \frac{1}{2} \int_\mathbb R \phi^2 \bar u _x dx dt\\
 =& - \mu \| \phi_x \|^2 dt + \mu \int_\mathbb R \phi \bar u_{xx} dx dt + \sigma\int_\mathbb R  \phi\bar u_x  dx dB(t).
	\end{aligned}
	\end{eqnarray}
By Lemma 2.1 ii),  we obtain
\begin{eqnarray}
	\left|\int_\mathbb R \phi \bar u_{xx} dx \right|  =   \left| \int_\mathbb R \phi_x \bar u_{x} dx \right| \leq  \epsilon \| \phi_x \|^2 + C_\epsilon\| \bar u_x \|^2  \leq \epsilon \| \phi_x \|^2 + C_\epsilon (2+t)^{-1}.
\end{eqnarray}
In addition, from It\^{o} formula, the quadratic variation reads
\begin{eqnarray}
	d\langle \| \phi \|^2 \rangle_t = \| \sigma(\phi_x + \bar u_x) \|^2 dt \leq \sigma^2 ( 1+ \epsilon) \| \phi_x \|^2 dt +  C_\epsilon (2+t)^{-1}dt, ~\forall~ 0 < \epsilon < 1,
\end{eqnarray}
which gives that 
\begin{eqnarray}\label{gronwall}
\begin{array}{lll}
	&&\displaystyle\| \phi(t) \|^2 + \int_0^t \int_\mathbb R \phi^2 \bar u_x dx dt + (2\mu- \sigma^2- 4\epsilon)\int_0^t \| \phi_x \|^2 ds \\
    \leq &&\displaystyle\| \phi_0 \|^2  + C_{\epsilon} \ln(2+t)  + 2\sigma \int_0^t \int_\mathbb R \phi \bar u_x dxdB(t).
    \end{array}
\end{eqnarray}
Choosing $\epsilon$ small enough implies that 
\begin{eqnarray}\label{3.15}
\begin{array}{lll}
&&\displaystyle	\| \phi(t) \|^2 + \int_0^t \| \phi_x \|^2 ds + \int_0^t \int_\mathbb R \phi^2 \bar u_x dx ds\\
&\leq &\displaystyle C_1 \left(\| \phi_0 \|^2 + \ln (2+t)\right)  + C_2\int_0^t \int_\mathbb R \phi\bar u_x  dxdB(s).
\end{array}
\end{eqnarray}
\end{proof}

\begin{lemma}
Assume $\sigma^2 < 2 \mu $ and $\phi(t,x)\in X_T$ is the strong solution of \eqref{Perturbation}, it holds that
\begin{eqnarray} \label{3.16}
\begin{array}{lll}
&& \displaystyle	d\| \phi_x \|^2 + \| \phi_{xx} \|^2 dt + \int_\mathbb R \phi_x^2 \bar u_x dx dt\\ \leq  &&\displaystyle C_1 \left((2+t)^{-2} dt +(2+t)^{-2}\|\phi\|^2dt+ \| \phi \|_{\mathbb L^6(\mathbb R)}^6 dt\right) - C_2 \int_\mathbb R \phi_{xx}   \bar u_{x} dx dB(t).
\end{array}
 \end{eqnarray}
\end{lemma}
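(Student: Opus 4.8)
The plan is to differentiate the perturbation equation \eqref{Perturbation} once in $x$ and run an $L^2$ energy estimate on $\phi_x$ via It\^o's formula. Writing the equation for $\phi_x$,
\[
d\phi_x + (\phi\bar u)_{xx}\,dt + \tfrac12(\phi^2)_{xx}\,dt = \mu\phi_{xxx}\,dt + \mu\bar u_{xxx}\,dt + \sigma(\phi_{xx}+\bar u_{xx})\,dB(t),
\]
and applying It\^o's formula together with the stochastic Fubini theorem gives $\tfrac12 d\|\phi_x\|^2 = \int_\mathbb R \phi_x\,d\phi_x\,dx + \tfrac12 d\langle\|\phi_x\|^2\rangle_t$. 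The regularity required to justify the repeated integrations by parts (in particular the presence of $\phi_{xxx}$) is supplied by Lemma \ref{rs}, since on $W_T\subset X_T$ one has $\phi\in C(\mathbb H^2)\cap \mathbb L^2(\mathbb H^3)$. The diffusion term yields the dissipation $-\mu\|\phi_{xx}\|^2\,dt$, and I collect the remaining drift terms one by one.

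For the convection contribution, expanding $(\phi\bar u)_{xx}=\bar u\phi_{xx}+2\bar u_x\phi_x+\bar u_{xx}\phi$ and integrating by parts, the first two pieces combine into a definite-sign term $-\tfrac32\int_\mathbb R \phi_x^2\bar u_x\,dx$, which, since $\bar u_x>0$ by Lemma \ref{ito1996}, is moved to the left-hand side and furnishes the good term $\int_\mathbb R\phi_x^2\bar u_x\,dx$. The remaining piece $-\int_\mathbb R\phi\phi_x\bar u_{xx}\,dx$ is rewritten, after one integration by parts, as $\tfrac12\int_\mathbb R\phi^2\bar u_{xxx}\,dx$ and estimated by $C\|\bar u_{xxx}\|_{\mathbb L^\infty}\|\phi\|^2\le C(2+t)^{-2}\|\phi\|^2$ using the $p=\infty$ bound of Lemma \ref{ito1996} ii); this produces the $(2+t)^{-2}\|\phi\|^2$ term. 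The linear source $\mu\int_\mathbb R\phi_x\bar u_{xxx}\,dx=-\mu\int_\mathbb R\phi_{xx}\bar u_{xx}\,dx$ is absorbed as $\epsilon\|\phi_{xx}\|^2+C_\epsilon\|\bar u_{xx}\|^2$, and $\|\bar u_{xx}\|^2\lesssim(2+t)^{-3}$ is folded into the $(2+t)^{-2}$ term.

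The main obstacle is the cubic term coming from $-\tfrac12\int_\mathbb R\phi_x(\phi^2)_{xx}\,dx$, which after integration by parts reduces to a multiple of $\int_\mathbb R\phi_x^3\,dx$. The naive Gagliardo--Nirenberg bound $\|\phi_x\|_{\mathbb L^3}^3\lesssim\|\phi_{xx}\|^{1/2}\|\phi_x\|^{5/2}$ introduces an undesirable power of $\|\phi_x\|$; instead I integrate by parts twice. First $\int_\mathbb R\phi_x^3\,dx=-2\int_\mathbb R\phi\phi_x\phi_{xx}\,dx\le\epsilon\|\phi_{xx}\|^2+C_\epsilon\int_\mathbb R\phi^2\phi_x^2\,dx$, and then using $\phi^2\phi_x=\tfrac13(\phi^3)_x$ one further integration by parts gives $\int_\mathbb R\phi^2\phi_x^2\,dx=-\tfrac13\int_\mathbb R\phi^3\phi_{xx}\,dx\le\epsilon\|\phi_{xx}\|^2+C_\epsilon\|\phi\|_{\mathbb L^6}^6$. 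Choosing $\epsilon$ small, the cubic term is controlled by $\epsilon\|\phi_{xx}\|^2+C\|\phi\|_{\mathbb L^6}^6$, which is precisely where the $\|\phi\|_{\mathbb L^6}^6$ term on the right-hand side originates.

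Finally I treat the stochastic terms. The quadratic variation is $d\langle\|\phi_x\|^2\rangle_t=\sigma^2\|\phi_{xx}+\bar u_{xx}\|^2\,dt\le\sigma^2(1+\epsilon)\|\phi_{xx}\|^2\,dt+C_\epsilon(2+t)^{-2}\,dt$, while in the martingale part the leading piece vanishes because $\int_\mathbb R\phi_x\phi_{xx}\,dx=\tfrac12\int_\mathbb R(\phi_x^2)_x\,dx=0$, leaving $2\sigma\int_\mathbb R\phi_x\bar u_{xx}\,dx\,dB(t)=-2\sigma\int_\mathbb R\phi_{xx}\bar u_x\,dx\,dB(t)$, namely the $-C_2\int_\mathbb R\phi_{xx}\bar u_x\,dx\,dB(t)$ term. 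Collecting all $\|\phi_{xx}\|^2$ contributions, the net dissipation coefficient is $-(2\mu-\sigma^2)+O(\epsilon)$, which is strictly negative by the assumption $\sigma^2<2\mu$; choosing $\epsilon$ small keeps a fixed positive multiple of $\|\phi_{xx}\|^2$ on the left, and renaming constants yields \eqref{3.16}.
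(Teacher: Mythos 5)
Your proof is correct, and at its core it is the same $H^1$-level It\^{o} energy estimate as the paper's: the paper tests the undifferentiated equation \eqref{Perturbation} with $-\phi_{xx}$, which after integration by parts is equivalent to your differentiate-then-test-with-$\phi_x$ scheme, and both routes produce the same dissipation $\mu\|\phi_{xx}\|^2$, the same good term $\int_{\mathbb R}\phi_x^2\bar u_x\,dx$ (yours with coefficient $3/2$, the paper's with $1/2$, which is immaterial), the same quadratic variation $\sigma^2\|\phi_{xx}+\bar u_{xx}\|^2\,dt$, and the same martingale $-C_2\int_{\mathbb R}\phi_{xx}\bar u_x\,dx\,dB(t)$. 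You differ in two sub-steps. For the cross term, the paper keeps it as $-\int_{\mathbb R}\phi_{xx}\phi\bar u_x\,dx$ and applies Cauchy--Schwarz with $\|\bar u_x\|_{\mathbb L^\infty}\lesssim(2+t)^{-1}$, whereas you integrate by parts to $\tfrac12\int_{\mathbb R}\phi^2\bar u_{xxx}\,dx$ and invoke the third-derivative decay $\|\bar u_{xxx}\|_{\mathbb L^\infty}\lesssim(2+t)^{-2}$; both bounds are available in Lemma \ref{ito1996}, so the outcomes coincide, though the paper's version uses less information about $\bar u$. For the cubic term, the paper applies the Gagliardo--Nirenberg inequality in the form $\|\phi_x\|_{\mathbb L^3}^3\le C\|\phi_{xx}\|^{3/2}\|\phi\|_{\mathbb L^6}^{3/2}$, while your double integration by parts ($\int\phi_x^3\,dx=-2\int\phi\phi_x\phi_{xx}\,dx$, then $\int\phi^2\phi_x^2\,dx=-\tfrac13\int\phi^3\phi_{xx}\,dx$) reaches the same bound $\epsilon\|\phi_{xx}\|^2+C_\epsilon\|\phi\|_{\mathbb L^6}^6$ by Cauchy--Schwarz and Young alone---a more elementary route that avoids interpolation at the cost of two extra lines (and it is fine that the small parameter of the inner Young step must be chosen after $C_\epsilon$ of the outer one). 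Both proofs rely on the same $W_T$ regularity from Lemma \ref{rs} to justify the manipulations, and both share the same harmless convention of renaming constants to put unit coefficients on the dissipation terms in \eqref{3.16}.
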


\begin{proof}
Multiply \eqref{Perturbation} by $-\phi_{xx} $,  it holds that, 
\begin{eqnarray*}
	-\phi_{xx} d \phi -\phi_{xx} (\phi_x \bar u + \phi \bar u_x) dt - \phi \phi_x \phi_{xx} dt = -\mu \phi_{xx}^2 dt - \mu \phi_{xx} \bar u_{xx} dt - \phi_{xx} \sigma (\phi_x + \bar u_x) dB(t).
\end{eqnarray*}
Note that from Lemma \ref{ito1996} ii, we have 
 \begin{eqnarray}
 	\int_\mathbb R -\phi_{xx} (\phi_x \bar u + \phi \bar u_x) dx &=&
 	\frac{1}{2} \int_\mathbb R \phi_x^2 \bar u_x dx - \int_ \mathbb R \phi_{xx} \phi \bar u_x dx \\\notag
 &\geq & \frac{1}{2} \int_\mathbb R \phi_x^2 \bar u_x dx - \epsilon \| \phi_{xx} \|^2 - C_\epsilon (2+t)^{-2}\| \phi \|^2,\\
\left|	\int_\mathbb R \phi_{xx} \bar u_{xx} dx\right| &\leq& \epsilon \| \phi_{xx} \|^2 + C_\epsilon (2+t)^{-2},\\
 	\left|		\int_\mathbb R \phi\phi_x\phi_{xx} dx\right|  &\le&\frac12\|\phi_x\|_{L^3(\mathbb R)}^3\le  C\|\phi_{xx}\|^{\frac32}\|\phi\|_{L^6(\mathbb R)}^\frac32\le \epsilon\|\phi_{xx}\|^2+C_\epsilon\|\phi\|_{L^6(\mathbb R)}^6.
 \end{eqnarray}
 On the other hand, from Lemma \ref{rs} 
 in which 	$f:= -(\phi \bar u )_x  - \frac{1}{2} (\phi^2 )_x\in L^2([0,T]\times\Omega, \mathbb H^1(\mathbb R))$ and $g:= \sigma\bar u_x\in L^2([0,T]\times\Omega, \mathbb H^2(\mathbb R))$, we conclude that $\|\phi_x\|^2$ is semi-martingale. Thus we get 
 \begin{eqnarray}
d\langle \| \phi_x \| ^2 \rangle_t = \sigma^2 \| \phi_{xx} + \bar u_{xx} \|^2 dt \leq  \sigma^2 ( 1 + \epsilon ) \| \phi_{xx} \|^2 dt + \sigma^2 C_{\epsilon} (2+t)^{-2} dt ,
 \end{eqnarray}
 which indicates that by choosing $\epsilon$ small, 
 \begin{eqnarray}\label{deriva}
 \begin{array}{lll}
 &&	\displaystyle d\| \phi_x \|^2 + \| \phi_{xx} \|^2 dt + \int_\mathbb R \phi_x^2 \bar u_x dx dt \\
 \leq &&\displaystyle C_1 \left((2+t)^{-2} dt +(2+t)^{-2}\|\phi\|^2dt+ \| \phi \|_{\mathbb L^6(\mathbb R)}^6 dt\right) -C_2  \int_\mathbb R \phi_{xx}  \bar u_{x}dx dB(t).
 \end{array}
 \end{eqnarray}
\end{proof}

Next we give an interesting $\mathbb L^p(\mathbb R)$ estimate which is crutial to close the a priori estimates.
\begin{lemma}\label{lemma4.3}
 Assume $\sigma^2 < 2\mu$ and $\phi(t,x)\in X_T$ is the strong solution of \eqref{Perturbation}, it holds that for any $p > 2$, 
 \begin{equation}\label{3.17}
d\| \phi(t) \|_ {\mathbb L^p(\mathbb R)} ^p +  \int_\mathbb R |\phi|^p \bar u_x + |\phi|^{p-2}\phi_x^2 dx dt \leq C_1( 2+ t)^{-\frac{p}{2}}dt + C_2\int_ \mathbb R |\phi|^{p-2} \phi \bar u_x  dx dB(t).
\end{equation}
\end{lemma}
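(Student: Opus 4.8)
The plan is to apply the infinite-dimensional It\^o formula to the functional $\|\phi(t)\|_{\mathbb L^p}^p=\int_{\mathbb R}|\phi|^p\,dx$. This is legitimate for $p>2$ because $s\mapsto|s|^p$ is then $C^2$ (its second derivative $p(p-1)|s|^{p-2}$ is continuous), and the regularity needed to make sense of the quadratic variation of the derivative is supplied by $\phi\in X_T$, the stochastic terms being interpreted in Krylov's sense as in Lemma \ref{rs}. Writing the drift of \eqref{Perturbation} as $\mu\phi_{xx}+\mu\bar u_{xx}-(\phi\bar u)_x-\frac12(\phi^2)_x$ and the diffusion coefficient as $\sigma(\phi_x+\bar u_x)$, It\^o's formula gives
\[
d\|\phi\|_{\mathbb L^p}^p=\mathcal D\,dt+\tfrac12 p(p-1)\sigma^2\!\int_{\mathbb R}|\phi|^{p-2}(\phi_x+\bar u_x)^2\,dx\,dt+\sigma p\!\int_{\mathbb R}|\phi|^{p-2}\phi(\phi_x+\bar u_x)\,dx\,dB(t),
\]
where $\mathcal D=\int_{\mathbb R}p|\phi|^{p-2}\phi\,(\mu\phi_{xx}+\mu\bar u_{xx}-(\phi\bar u)_x-\tfrac12(\phi^2)_x)\,dx$.

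First I would integrate by parts to extract the two genuinely sign-definite terms. The viscous term yields $-\mu p(p-1)\int|\phi|^{p-2}\phi_x^2\,dx$, and combined with the leading piece $\frac12 p(p-1)\sigma^2\int|\phi|^{p-2}\phi_x^2\,dx$ of the It\^o correction it produces $-p(p-1)(\mu-\frac12\sigma^2)\int|\phi|^{p-2}\phi_x^2\,dx$, whose coefficient is strictly negative precisely because $\sigma^2<2\mu$ --- this is exactly where the hypothesis enters. Using the identities $|\phi|^{p-2}\phi\phi_x=\frac1p\partial_x|\phi|^p$ and $|\phi|^p\phi_x=\frac1{p+1}\partial_x(|\phi|^p\phi)$, the conservative nonlinearities combine into $-(p-1)\int|\phi|^p\bar u_x\,dx$, which is negative since $\bar u_x>0$ by Lemma \ref{ito1996} i); the pure flux term $\frac12(\phi^2)_x$ contributes nothing. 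The same boundary-free identity annihilates the $\phi_x$ part of the stochastic integral, leaving exactly $\sigma p\int|\phi|^{p-2}\phi\bar u_x\,dx\,dB(t)$, i.e. the claimed martingale with $C_2=\sigma p$.

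The remaining drift terms are the cross term $p(p-1)(\sigma^2-\mu)\int|\phi|^{p-2}\phi_x\bar u_x\,dx$ (from $\mu\bar u_{xx}$ together with the It\^o cross product) and $\frac12 p(p-1)\sigma^2\int|\phi|^{p-2}\bar u_x^2\,dx$. For the cross term I would use Young's inequality to split it as $\epsilon\int|\phi|^{p-2}\phi_x^2\,dx+C_\epsilon\int|\phi|^{p-2}\bar u_x^2\,dx$, absorbing the first into the dissipation for $\epsilon$ small. Everything then reduces to estimating $\int|\phi|^{p-2}\bar u_x^2\,dx$, and here is the decisive step: rather than the trivial H\"older bound (which leaves an unwanted $\|\phi\|_{\mathbb L^p}^{p-2}$ factor and would force a Gronwall loss), I would apply the weighted Young inequality
\[
|\phi|^{p-2}\bar u_x^2\le \delta\,|\phi|^p\bar u_x+C_\delta\,\bar u_x^{(p+2)/2},
\]
so that the first piece is absorbed by the good term $-(p-1)\int|\phi|^p\bar u_x\,dx$ for $\delta$ small, while the second integrates to $\|\bar u_x\|_{\mathbb L^{(p+2)/2}}^{(p+2)/2}$.

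The main obstacle is precisely obtaining the sharp decay $(2+t)^{-p/2}$ on the right-hand side, and it is resolved by Lemma \ref{ito1996} ii): with $q=(p+2)/2>2$ one has $\|\bar u_x\|_{\mathbb L^q}\le C\,t^{-1+1/q}$ for large $t$, hence $\int_{\mathbb R}\bar u_x^{(p+2)/2}\,dx=\|\bar u_x\|_{\mathbb L^q}^q\le C\,t^{-(q-1)}=C(2+t)^{-p/2}$, which is exactly the claimed rate and explains why feeding the surplus into the monotonicity term $\bar u_x>0$ of the rarefaction profile is essential. Collecting the good terms, now with strictly positive coefficients, together with the $C_1(2+t)^{-p/2}$ drift bound and the surviving martingale yields \eqref{3.17}; the displayed unit coefficients on the left are understood up to these fixed positive constants.
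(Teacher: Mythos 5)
Your proposal is correct and follows essentially the same route as the paper's proof: It\^o's formula for $F_p(\phi)=\int_{\mathbb R}|\phi|^p\,dx$, the integration-by-parts identities that kill the flux term and the $\phi_x$ part of the martingale, absorption of the cross terms by Young's inequality into the dissipation $\int|\phi|^{p-2}\phi_x^2\,dx$ (using $\sigma^2<2\mu$) and into the rarefaction term $\int|\phi|^p\bar u_x\,dx$, and finally the bound $\int\bar u_x^{(p+2)/2}dx\le C(2+t)^{-p/2}$ from Lemma \ref{ito1996} ii). The only difference is organizational (you merge the $\mu\bar u_{xx}$ and It\^o cross terms before estimating, while the paper treats them separately), which changes nothing of substance.
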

 \begin{proof}
 Multiply  \eqref{Perturbation} by $|\phi|^{p-2}\phi$, we have 
 \begin{eqnarray}
\begin{array}{lll}&& \displaystyle|\phi|^{p-2}\phi d \phi + |\phi|^{p-2}\phi (\phi \bar{u})_x dt + |\phi|^{p-2}\phi \phi_x dt \\
=&& \displaystyle\mu |\phi|^{p-2}\phi \phi_{xx} dt + \mu |\phi|^{p-2}\phi \bar u_{xx} dt + |\phi|^{p-2}\phi \sigma (\phi_x + \bar u_x) dB(t),\label{lp}
\end{array}
 \end{eqnarray}
and 
\begin{eqnarray*}
\int_\mathbb{R} |\phi|^{p-2}\phi ( \phi \bar{u} )_x dx &=& \int_\mathbb{R} |\phi|^p \bar{u}_x dx + \int_\mathbb{R}  |\phi|^{p-2}\phi\phi_x \bar{u} dx = \frac{p-1}p \int_\mathbb{R} |\phi|^p \bar{u }_x dx ,\\
\int_\mathbb R |\phi|^{p-2}\phi \phi_{xx} dx &=& \int_\mathbb R |\phi|^{p-2} [ (\phi \phi_x)_x - \phi_x^2] dx = -(p-1) \int_\mathbb R |\phi|^{p-2} \phi_x ^2 dx ,\\
\int_\mathbb{R} |\phi|^{p-2}\phi \bar u_{xx} dx &=& -(p-1) \int_\mathbb R |\phi|^{p-2} \phi_x \bar u_x dx \leq \epsilon \int_\mathbb R |\phi|^{p-2} \phi_x^2 dx + \epsilon \int_\mathbb R |\phi|^p \bar u_x dx + C_\epsilon \int_\mathbb R \bar u_x ^{\frac12 p+1} dx \\
&\leq & \epsilon \int_\mathbb R |\phi|^{p-2} \phi_x^2 dx + \epsilon \int_\mathbb R |\phi|^{p} \bar u_x dx + C_\epsilon (2+t)^{-\frac{p}{2}} .
\end{eqnarray*}
On the other hand,  set $F_p(u) = \int_\mathbb R |u|^p(x) dx $, then $F_p \in \mathbb C^2 \big( \mathbb H^1 (\mathbb R) , \mathbb R \big)  $ and it's first and second derivatives are
\begin{eqnarray*}
   DF_p(u) &=& p |u|^{p-2}u \ \in \mathbb L^2(\mathbb R) ,\\
	D^2F_p(u) &=& (p^2-p) |u|^{p-2} \ \in \mathscr L(\mathbb L^2(\mathbb R)).
\end{eqnarray*}
Then the It\^{o} formula implies that 
\begin{eqnarray}
\begin{array}{lll}
	dF_p( \phi )
	&=& \displaystyle\langle DF_p(\phi), d\phi \rangle + \frac{1}{2} \left \|[ D^2F_p(\phi)]^\frac{1}{2} \sigma(\phi_x + \bar u_x)  \right \|^2dt \\
	  &=& \displaystyle p \langle |\phi|^{p-2}\phi, d\phi \rangle + \frac12(p^2-p)\sigma^2 \left\|  |\phi|^{\frac{p}2-1}  (\phi_x + \bar u_x) \right\|^2dt \\
	  &\leq &\displaystyle  p \langle |\phi|^{p-2}\phi, d\phi \rangle + (\frac12(p^2-p) \sigma^2+\epsilon) \left\| |\phi|^{\frac{p}2-1} \phi_x \right \|^2dt + C_\epsilon \left \| |\phi|^{\frac{p}2-1} \bar u_x \right \|^2dt,
	  \end{array}
\end{eqnarray}
and
\begin{eqnarray}
\left \| |\phi|^{\frac{p}2-1} \bar u_x \right \|^2   &\leq& \epsilon \int_\mathbb R |\phi|^p \bar u_x dx + C_\epsilon \int_\mathbb R \bar u_x ^{\frac{p}2+1} dx  \leq  \epsilon \int_\mathbb R |\phi|^p \bar u_x dx + C_\epsilon (2+t)^{-\frac{p}2} .
\end{eqnarray}
Collecting all estimates above, we get that 
\begin{eqnarray*}
&&\displaystyle d\| \phi \|_{\mathbb L^p(\mathbb R)}^p + ( p-1 - \epsilon ) \int_\mathbb R |\phi|^p \bar u_x dx dt + \frac12(p^2-p)(2\mu - \sigma^2 -\epsilon ) \int_\mathbb R |\phi|^{p-2} \phi_x^2 dx dt \\
&\leq&\displaystyle C_\epsilon (2+t)^{-\frac{p}2}dt + p\sigma \int_ \mathbb R |\phi|^{p-2}\phi \bar u_x dx dB(t).
\end{eqnarray*}
Note that $\sigma^2 < 2\mu$, choosing $\epsilon$ small gives \eqref{3.17}. Thus the proof of Lemma \ref{lemma4.3} is completed.
\end{proof}

\section{Decay estimates}\label{decay estimates}

This section is devoted to the decay rate of strong solution toward the rarefaction wave given in \eqref{rarefaction}. 
\subsection{Decay rate in expectation}

\begin{theorem}\label{t1}
Let $\phi\in X_T $ be the unique strong solution of \eqref{Perturbation}, it holds that for any $0\le t\le T$, 
\begin{eqnarray} \label{l2}
\mathbb E \| \phi(t) \|^2 +\mathbb E\int_0^t\|\phi_x\|^2dt+\mathbb E\int_0^t\int_{\mathbb R}\phi^2\bar{u}_xdxdt\leq  C \ln(2+t), 
\end{eqnarray}
and 
\begin{eqnarray}\label{5.20}
\mathbb{E} \| \phi \|_{\mathbb{L}^p (\mathbb{R})}^p \le C (2 + t)^{- \frac{p-2}{4}} \ln^{p} (2+ t).
\end{eqnarray}
\end{theorem}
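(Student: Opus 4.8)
The bound \eqref{l2} is the easy half: it follows by taking expectations in the energy estimate \eqref{3.14} of Lemma \ref{lemma4.1}. All three terms on the left are nonnegative (recall $\bar u_x>0$ by Lemma \ref{ito1996}), and the It\^o integral $\int_0^t\!\int_\mathbb R\phi\bar u_x\,dx\,dB(s)$ is a true martingale once one checks $\mathbb E\int_0^t(\int_\mathbb R\phi\bar u_x\,dx)^2ds<\infty$; by Cauchy--Schwarz this is dominated by $\mathbb E\sup_s\|\phi\|^2\cdot\int_0^t\|\bar u_x\|^2ds$, which is finite since $\phi\in X_T$ and $\|\bar u_x\|^2\le C(2+t)^{-1}$ by Lemma \ref{ito1996}. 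Hence its expectation vanishes, and absorbing $\|\phi_0\|^2$ into $\ln(2+t)$ for $t\ge 1$ gives \eqref{l2}. The substantive half is \eqref{5.20}, where the plan is to convert the $\mathbb L^p$ estimate \eqref{3.17} into a \emph{nonlinear} Gronwall inequality for $y(t):=\mathbb E\|\phi(t)\|_{\mathbb L^p}^p$. Taking expectations in \eqref{3.17} (the martingale term vanishing after the same integrability check) and discarding the nonnegative term $\mathbb E\int|\phi|^p\bar u_x\,dx$ leaves
\[
y'(t)+\mathbb E\!\int_\mathbb R|\phi|^{p-2}\phi_x^2\,dx\ \le\ C(2+t)^{-\frac p2}.
\]

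The next step is to bound the dissipation $D:=\int_\mathbb R|\phi|^{p-2}\phi_x^2\,dx$ from below by a power of $y$. Writing $w=|\phi|^{p/2}$, so that $\|w_x\|^2=\tfrac{p^2}{4}D$ and $\|w\|^2=\|\phi\|_{\mathbb L^p}^p$, the interpolation inequality \eqref{ii1} of Lemma \ref{ii} (with $n=1$, $q=2$) becomes $\|\phi\|_{\mathbb L^p}^p\le C\,D^{\frac{p-2}{p+2}}(\|\phi\|^2)^{\frac{2p}{p+2}}$. I would then take expectations via H\"older's inequality in $\omega$ with the conjugate exponents $\tfrac{p+2}{p-2}$ and $\tfrac{p+2}{4}$, giving
\[
y(t)\ \le\ C\,(\mathbb E D)^{\frac{p-2}{p+2}}\big(\mathbb E\|\phi\|^p\big)^{\frac{4}{p+2}}.
\]
Feeding in a higher-moment bound $\mathbb E\|\phi(t)\|^p\le C\ln^p(2+t)$ and solving for $\mathbb E D$ yields $\mathbb E D\ge c\,y^{\frac{p+2}{p-2}}\ln^{-\frac{4p}{p-2}}(2+t)$, so the inequality closes as
\[
y'(t)+c\,\frac{y(t)^{\,1+\frac{4}{p-2}}}{\ln^{\frac{4p}{p-2}}(2+t)}\ \le\ C(2+t)^{-\frac p2}.
\]
Since the forcing $(2+t)^{-p/2}$ decays strictly faster than the expected rate $(2+t)^{-(p-2)/4}$, a comparison argument for this Bernoulli-type inequality (the homogeneous balance gives $y\sim t^{-1/\theta}$ with $\theta=\tfrac{4}{p-2}$, up to the logarithmic factor carried by the denominator) yields exactly $y(t)\le C(2+t)^{-\frac{p-2}{4}}\ln^p(2+t)$, which is \eqref{5.20}.

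The main obstacle is supplying the higher-moment estimate $\mathbb E\|\phi\|^p\le C\ln^p(2+t)$, since Jensen only gives $(\mathbb E\|\phi\|^2)^{p/2}$ as a \emph{lower}, not an upper, bound, and \eqref{l2} controls only the first moment of $\|\phi\|^2$. I would obtain it directly from the $L^2$ identity behind \eqref{3.14}: writing $Z(t)=\|\phi(t)\|^2\le C\ln(2+t)+C M(t)$ with $M(t)=\int_0^t\!\int_\mathbb R\phi\bar u_x\,dx\,dB(s)$, raise to the power $p/2$, take $\mathbb E\sup_{s\le t}$, and apply Burkholder--Davis--Gundy. The quadratic variation obeys $\langle M\rangle_t\le C\int_0^t\|\phi\|^2(2+s)^{-1}ds\le C\ln(2+t)\sup_{s\le t}\|\phi\|^2$, so BDG gives $\mathbb E\sup_{s\le t}|M|^{p/2}\le C\ln^{p/4}(2+t)\,\mathbb E(\sup_{s\le t}\|\phi\|^2)^{p/4}$. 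The resulting inequality for $A:=\mathbb E\sup_{s\le t}\|\phi\|^p$ is self-improving: Cauchy--Schwarz bounds the right-hand moment by $A^{1/2}$, turning it into a quadratic inequality $A\le C\ln^{p/2}+C\ln^{p/4}A^{1/2}$, whence $A\le C\ln^{p/2}(2+t)\le C\ln^{p}(2+t)$, which is more than enough.

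Two technical points must be handled with care throughout. First, one must justify that the It\^o integrals are genuine martingales and that all $p$-th moments encountered are finite; I would do this by localizing with stopping times and invoking the regularity $W_T\subset X_T$ from Lemma \ref{rs} (Krylov's theory), deriving the a priori bounds on the stopped processes and passing to the limit. Second, the logarithmic powers must be tracked so that they close at $\ln^p$ without degrading through the BDG/bootstrap and the final comparison step; the computation above shows the exponents are consistent, the sharp intermediate bound $\ln^{p/2}$ being comfortably within the stated $\ln^p$.
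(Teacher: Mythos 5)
Your proposal is correct, and both halves where it matters take a genuinely different route from the paper. For \eqref{l2} you do exactly what the paper does (take expectation in \eqref{3.14}, with the integrability check the paper leaves implicit). For the higher moments $\mathbb E\sup_{s\le t}\|\phi(s)\|^{p}$, the paper runs an induction over an increasing sequence $p_{n+1}=3p_n/(2-1/p_n)$, using a weighted splitting of $\int_{\mathbb R}\phi\bar u_x\,dx$ so that the BDG bound for the $p_{n+1}$-moment involves only the \emph{lower} moment $p_n$, which is finite by induction; this sidesteps any a priori finiteness question but costs the iteration machinery and yields $\ln^{p}(2+t)$. Your self-improving inequality $A\le C\ln^{p/2}+C\ln^{p/4}A^{1/2}$ is shorter and gives the sharper power $\ln^{p/2}(2+t)$, but it is self-referencing, so the localization by stopping times that you flag is not optional: without first knowing $A<\infty$ (e.g.\ by working with $\tau_N=\inf\{t:\|\phi(t)\|_{\mathbb H^1}\ge N\}$, obtaining the bound uniformly in $N$, and passing to the limit by monotone convergence), the quadratic inequality is vacuous. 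For the decay itself, the paper multiplies \eqref{3.17} by $(2+t)^{p/2}$ and absorbs the interpolation term \emph{pathwise} into the dissipation via Young's inequality (\eqref{gn2}), ending with the linear integral inequality \eqref{5.22new} into which the moment bound is substituted directly; you instead keep the dissipation, apply Lemma \ref{ii} plus H\"older in $\omega$ to get the Bernoulli-type inequality $y'+c\,y^{\frac{p+2}{p-2}}\ln^{-\frac{4p}{p-2}}(2+t)\le C(2+t)^{-p/2}$, and close by a supersolution/first-crossing comparison with $\bar y=M(2+t)^{-\frac{p-2}{4}}\ln^{p}(2+t)$. Your exponent algebra is consistent throughout ($\frac{p-2}{p+2}+\frac{4}{p+2}=1$, and the barrier balances exactly at the rate $(2+t)^{-\frac{p-2}{4}}$ with the log power inherited from the moment bound), so both routes land on \eqref{5.20}; the paper's linear Gronwall step is more robust (no regularity or first-crossing issues for $y$, which in your version should be handled in integral form since $y$ need not be differentiable), while your version is more self-contained at the ODE level and, combined with your moment bound, would actually improve the logarithm in \eqref{5.20} to $\ln^{p/2}(2+t)$.
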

\begin{proof}
The $L^2$ estimate \eqref{l2} for $\phi$ directly follows from Lemma \ref{lemma4.1} by taking expectation. For $L^p$ estimate \eqref{5.20}, 
 multiplying \eqref{3.17} by  $(2+t)^\frac{p}{2}$ gives that  

\begin{eqnarray} \label{5.21}
\begin{array}{lll}
	&&\displaystyle d(2+t)^\frac{p}{2} \| \phi \|_ {\mathbb L^{p}(\mathbb R)} ^p + (2+t)^\frac{p}{2}  \int_\mathbb R |\phi|^p \bar u_x + |\phi|^{p-2} \phi_x^2 dx dt \\
	 &\leq& \displaystyle C_1 dt + C_1(2+t)^{\frac{p}{2}-1}\| \phi \|_ {\mathbb L^{p}(\mathbb R)} ^p dt + C_2 (2+t)^\frac{p}{2} \int_ \mathbb R |\phi|^{p-2} \phi  \bar u_x dxdB(t).
	 \end{array}
\end{eqnarray}
Thanks the inequality \eqref{ii1}, we have 
\begin{eqnarray}\label{gn2}
\begin{array}{lll}
\displaystyle	 (2+t)^{\frac{p}{2}-1}\| \phi \|_ {\mathbb L^p(\mathbb R)}^p &\leq & \displaystyle C_p (2+t)^{\frac{p}{2}-1} \|  (\phi^{\frac{p}{2}})_x  \|_ {\mathbb L^2(\mathbb R)}^{\frac{2p-4}{p+2}} \| \phi \|_ {\mathbb L^2(\mathbb R)}^{\frac{4p}{p+2}}\\
	  &\leq & \displaystyle \frac12 (2+t)^\frac{p}{2} \int_\mathbb R |\phi|^{p-2} \phi_x^2 dx + C_p (2+t)^{\frac{p-2}4} \| \phi \|^p.
	  \end{array}
\end{eqnarray}
Integrating (\ref{5.21}) over $[0,t]$ gives that  
\begin{equation} \label{5.22}
	 (2+t)^\frac{p}{2} \| \phi(t) \|_ {\mathbb L^p(\mathbb R)} ^p  \leq C_1t  + C_1 \int_0^t (2+s)^{\frac{p-2}4} \| \phi \|^p ds + C_2 \int_0^t (2+s)^\frac{p}{2} \int_ \mathbb R |\phi|^{p-2} \phi \bar u_x  dx dB(s).
\end{equation}
Taking expectation, we have 
\begin{equation} \label{5.22new}
 \mathbb E\| \phi(t) \|_ {\mathbb L^p(\mathbb R)} ^p  \leq C(2+t)^{\frac{2-p}{2} } + C (2+t)^{-\frac{p}{2}} \int_0^t (2+s)^{\frac{p-2}4} \mathbb E\| \phi \|^p ds.
\end{equation}
We estimate $\mathbb E \| \phi(t) \|^p $ through $\mathbb E \| \phi(t) \|^4 $ and  the BDG inequality.  In fact, \eqref{3.14} gives that  

\begin{eqnarray}
	\| \phi(t) \|^4 & \leq & \left [ C_1 \left [(1+ \| \phi_0 \|^2) + \ln (2+t) \right] + C_2\int_0^t \int_ \mathbb R \phi \bar u_x dx dB(s) \right ]^2,
\end{eqnarray}
and
\begin{eqnarray}\label{e4}
\begin{array}{lll}
	\mathbb E \| \phi \|^4 &\leq  &
\displaystyle C_1 \ln^2(2+t) +  C_2 \mathbb E \int_0^t \left(\int_ \mathbb R \phi \bar{u}_x dx\right)^2 ds\\
 &=& \displaystyle C_1 \ln^2(2+t) +  C_2 \mathbb E \int_0^t \left(\int_{u_-}^{u_+} \phi d\bar{u}\right)^2 ds\\
 &\le &\displaystyle C_1 \ln^2(2+t) +  C_2 (u_+-u_-)\mathbb E \int_0^t \int_{\mathbb R}\phi^2 \bar{u}_xdxds\\
 &\le &\displaystyle C\ln^2(2+t). 
 \end{array}
\end{eqnarray}
We can check that $\mathbb E\|\phi\|^p\le C\ln^{\frac{p}{2}}(2+t), \forall p\in [2,4]$ by the H\"older inequality. Next we shall use the BDG inequality and the decay property $\bar{u}_x\le \frac{C}{2+t}$ to estimae $\mathbb E\|\phi\|^p$ for any $p>4$.

We claim that 
\begin{equation}\label{claim}
\mathbb E\|\phi\|^p\le C_p\ln^{p}(2+t),\quad \forall p\ge 4.	
\end{equation}
The claim will be proved by the following induction principle: find a sequence $p_n\to \infty$ so that \eqref{claim} is valid for all $p_n$. Since \eqref{claim} holds for $p_0=4$ due to \eqref{e4}, we can suppose that \eqref{claim} is true for  $p_k, k=0,1,\cdots,n$ until $p_n$ starting with $p_0=4$. Now we try to find $p_{n+1}>p_n$. Note that $\|\phi\|^2$ is semi-martingale, the BDG inequality and \eqref{3.14} yield that 
\begin{equation}\label{bdg1}
\mathbb E\|\phi\|^{p_{n+1}}\le 	\mathbb E(\sup_{0\le s\le t}\|\phi\|^2)^{\frac{p_{n+1}}{2}}\le  C\left(\ln^{\frac{p_{n+1}}{2}} (2+t)+
\mathbb E\left[ \int_0^t \left(
\int_{\mathbb{R}} \phi \overline{u_{}}_x d x \right)^2 d s
\right]^{\frac{p_{n+1}}{4}}\right)
	\end{equation}
and for some $\frac12<\beta_n<1$ determined later, 
\begin{equation}\label{ep}
	\begin{aligned}
		& \mathbb{E}\left[\int_{0}^{t}\left(\int \phi \bar u_{x} d x\right)^{2} d x\right]^{\frac{p_{n+1}}{4}} 
		\le \mathbb{E}\left[\int_{0}^{t}\left(\int |\phi|^{\beta_{n}} (|\phi|^{1-\beta_{n}}\bar  u_{x}^{\frac12})\bar  u_{x}^{\frac12} d x\right)^{2}\right]^{\frac{p_{n+1}}{4}}\\
		\le&\mathbb E \left[\int_{0}^{t}\left(\int |\phi|^{2(1-\beta_{n})} \bar u_{x} d x\right)\left\|\bar u_{x}\right\|_\frac{1}{1-\beta_{n}}\|\phi\|^{2 \beta_{n}}d s\right]^{\frac{p_{n+1}}{4}}\\
		\leq & \mathbb E \left\{\int_{0}^{t}\left(\int_{R}|\phi|^{2(1-\beta_{n})} \bar u_{x} d x\right)^{\frac{1}{1-\beta_n}} d s+C\left[\int_{0}^{t}(2+s)^{-1} \| \phi\| ^2 d s\right]^{\frac{\beta_np_{n+1}}{4-(1-\beta_n)p_{n+1}}}\right\}\\
		\leq& C\mathbb E \int_{0}^{t} \int_{\mathbb{R}}\phi^{2}\bar u_{x} d x d s+C\mathbb E \left[\int_{0}^{t}(2+s)^{-1} \| \phi\|^{2} d s\right]^{\frac{\beta_np_{n+1}}{4-(1-\beta_n)p_{n+1}}}\\
		\leq& C\ln(2+t)+\ln^{\frac{\beta_np_{n+1}}{4-(1-\beta_n)p_{n+1}}}(2+t)\mathbb E \sup_{0\le s\le t}\|\phi\|^{\frac{2\beta_np_{n+1}}{4-(1-\beta_n)p_{n+1}}}(s).
	\end{aligned}
\end{equation}
Choosing $\beta_n=1-\frac{2}{3p_n}$ and 
\begin{equation}
	p_{n+1}:=\frac{3p_n}{2-\frac{1}{p_n}}\ge \frac{3}{2}p_n
\end{equation}
so that $\frac{2\beta_np_{n+1}}{4-(1-\beta_n)p_{n+1}}=p_n$, we obtain from \eqref{claim}-\eqref{ep}  that 
\begin{equation}
\mathbb E\|\phi\|^{p_{n+1}}\le 	C(\ln^{\frac{p_{n+1}}{2}}(2+t)+\ln^{\frac{p_n}{2}}(2 + t)
\mathbb E\|\phi\|^{p_n}) \le C\ln^{\frac{3}{2}p_n}(2+t)\le C\ln^{p_{n+1}}(2+t).
\end{equation}
Thus we find the sequence $\{p_n\}, n=1,\cdots$ and verify the claim \eqref{claim} for all $p_n$. The claim for $p\in (p_n,p_{n+1})$ can be justified by the H\"older inequality. Hence \eqref{claim} is indeed true for all $p>2$. 

Substituting \eqref{claim} into \eqref{5.22new} gives \eqref{5.20}. Therefore the proof is complete. 
 \end{proof}

 \begin{lemma}
Assume $\sigma^2 < 2 \mu$. Let $\phi\in X_T$ be the solution of (\ref{Perturbation}), then it holds that for any $\epsilon>0$ and any $0\le t\le T$, 
 \begin{eqnarray} \label{5.25}
\mathbb E \| \phi_x(t) \|^2 \leq C_\epsilon (2+t)^{-\frac{1}{2}+\epsilon}.
\end{eqnarray}
 \end{lemma}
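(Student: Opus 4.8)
The plan is to apply the Area Inequality (Theorem~\ref{area}) with $f(t)=\mathbb E\|\phi_x(t)\|^2$. First I would take expectation in the derivative energy inequality \eqref{3.16}. Since $\|\phi_x\|^2$ is a semi-martingale (as established via Lemma~\ref{rs} in the proof of \eqref{3.16}), the stochastic integral is a martingale and vanishes in expectation, while the two nonpositive terms $\mathbb E\|\phi_{xx}\|^2$ and $\mathbb E\int_{\mathbb R}\phi_x^2\bar u_x\,dx$ may simply be discarded since they only help the upper bound. This leaves the scalar drift inequality
\[
\frac{d}{dt}\mathbb E\|\phi_x(t)\|^2 \leq C_1\left((2+t)^{-2} + (2+t)^{-2}\mathbb E\|\phi\|^2 + \mathbb E\|\phi\|_{\mathbb L^6(\mathbb R)}^6\right).
\]

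Next I would bound the right-hand side using the decay estimates of Theorem~\ref{t1}. From \eqref{l2} we have $\mathbb E\|\phi\|^2\le C\ln(2+t)$, so the first two terms are $O((2+t)^{-2}\ln(2+t))$; the genuinely controlling term is $\mathbb E\|\phi\|_{\mathbb L^6(\mathbb R)}^6$, which by \eqref{5.20} with $p=6$ is bounded by $C(2+t)^{-1}\ln^6(2+t)$. Hence $\frac{d}{dt}\mathbb E\|\phi_x(t)\|^2\le C(2+t)^{-1}\ln^6(2+t)$. Since the Area Inequality is phrased for a pure power on the right-hand side, I would absorb the logarithm: for any $\epsilon'>0$ one has $\ln^6(2+t)\le C_{\epsilon'}(2+t)^{\epsilon'}$, yielding $f'(t)\le C_{\epsilon'}(2+t)^{-1+\epsilon'}$, i.e.\ hypothesis \eqref{lemma1} with $\alpha=1-\epsilon'$. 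The companion integral hypothesis \eqref{con} is read off directly from \eqref{l2}, namely $\int_0^t f(s)\,ds=\mathbb E\int_0^t\|\phi_x\|^2\,ds\le C\ln(2+t)$, which gives $\beta=0$ and $\gamma=1$.

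With $\beta=0<\alpha=1-\epsilon'$ and $\alpha+\beta=1-\epsilon'<2$, the hypotheses of Theorem~\ref{area} are satisfied (noting $(1+t)$ and $(2+t)$ are comparable), and its conclusion \eqref{l1} gives
\[
\mathbb E\|\phi_x(t)\|^2 \le 2\sqrt{C_0C_1}\,(1+t)^{-\frac{1-\epsilon'}{2}}\ln^{1/2}(1+t), \qquad t\gg 1.
\]
Finally I would again absorb $\ln^{1/2}(1+t)\le C(2+t)^{\epsilon''}$ and choose $\epsilon',\epsilon''$ small enough that $\tfrac{\epsilon'}{2}+\epsilon''<\epsilon$, obtaining the claimed rate \eqref{5.25}.

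I expect the main difficulty to be twofold and essentially technical. First, one must rigorously justify that taking expectation annihilates the stochastic integral and that $f(t)=\mathbb E\|\phi_x(t)\|^2$ is regular enough for Theorem~\ref{area} to apply; with only the a priori regularity $\phi\in X_T$ this is cleanest through a localization/stopping-time argument together with the BDG inequality and the decay $\bar u_x\le C(2+t)^{-1}$, while absolute continuity (indeed local Lipschitz continuity) of $f$ follows from $\mathbb E\int_0^T\|\phi_{xx}\|^2\,ds<\infty$. Second, and conceptually the crux, the derivative inequality only delivers $\alpha<1$, so the naive integrating-factor argument sketched in the Remark following Theorem~\ref{area} produces \emph{no} decay at all; it is precisely here that the Area Inequality is indispensable, and the entire $\epsilon$-loss in \eqref{5.25} is traceable to absorbing the logarithmic factors into the exponent $\alpha$.
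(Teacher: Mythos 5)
Your proposal is correct and follows essentially the same route as the paper: take expectation in \eqref{3.16}, bound the right-hand side via \eqref{l2} and \eqref{5.20} with $p=6$, absorb the logarithm into a small power loss to get $f'(t)\le C_\epsilon(2+t)^{-1+2\epsilon}$, invoke $\int_0^t\mathbb E\|\phi_x\|^2\,ds\le C\ln(2+t)$, and conclude by the Area Inequality with $f(t)=\mathbb E\|\phi_x(t)\|^2$. The additional remarks on semi-martingale regularity and the failure of the naive integrating-factor argument are consistent with, and slightly more careful than, the paper's brief presentation.
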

 \begin{proof}
Take expectation on (\ref{3.16}), we have
\begin{equation} \label{derivative}
 	d \mathbb E \| \phi_x \|^2 + \mathbb E \| \phi_{xx} \|^2 dt  \leq  C (2+t)^{-2} (1+\mathbb E\|\phi\|^2)dt + C \mathbb E \| \phi \|_{\mathbb L^6(\mathbb R)}^6 dt.
 \end{equation}
Choosing $p=6$ in (\ref{5.20}) gives that
\begin{eqnarray} \label{5.26}
\frac{d}{dt}  \mathbb E \| \phi_x(t) \|^2  \leq C (2+t)^{-1}\ln^6(2+t)\le C_\epsilon(2+t)^{-1+2\epsilon}.
\end{eqnarray}
Note that 
\begin{equation}
\int_0^t\mathbb E\|\phi_x\|^2dt\le C\ln (2+t),  
\end{equation}
the area inequality (Theorem 1.3, $f(t)=\mathbb E\|\phi_x\|^2$) implies that
\begin{eqnarray*}
\mathbb E \| \phi_x(t) \|^2 \leq C_\epsilon (2+t)^{-\frac{1}{2}+\epsilon }.
\end{eqnarray*}
 \end{proof}

%
\begin{theorem}\label{t2}
 Assume $\sigma^2 < 2 \mu$. Let $\phi\in X_T$ be the solution of (\ref{Perturbation}),
 then for any $\epsilon>0$, 
 \begin{eqnarray}
\mathbb E \| \phi \|_{\mathbb L^\infty (\mathbb R)}  \leq  C_\epsilon (2+t)^{-\frac14+\epsilon}.
\end{eqnarray}
 \end{theorem}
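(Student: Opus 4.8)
The plan is to obtain the $\mathbb L^\infty$ bound by interpolating between a high $\mathbb L^p$ norm and the $\dot{\mathbb H}^1$ seminorm $\|\phi_x\|$, and then taking expectation together with H\"older's inequality on the sample space. The naive one-dimensional Sobolev inequality $\|\phi\|_{\mathbb L^\infty}\le C\|\phi\|^{1/2}\|\phi_x\|^{1/2}$ is not sharp enough here: since Theorem \ref{t1} only gives $\mathbb E\|\phi\|^2\le C\ln(2+t)$ (no polynomial decay of the $\mathbb L^2$ norm) while \eqref{5.25} gives $\mathbb E\|\phi_x\|^2\le C_\epsilon(2+t)^{-1/2+\epsilon}$, this route would yield merely $(2+t)^{-1/8}$. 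The key observation is that the full rate $-\tfrac14$ must be extracted from the polynomial decay of the higher $\mathbb L^p$ norms established in \eqref{5.20}, whose $p$-th roots decay at a rate tending to $-\tfrac14$ as $p\to\infty$.

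Concretely, first I would apply the Gagliardo--Nirenberg inequality \eqref{gn} in the form
\[
\|\phi\|_{\mathbb L^\infty(\mathbb R)}\le C_p\,\|\phi_x\|^{\theta}\,\|\phi\|_{\mathbb L^p(\mathbb R)}^{1-\theta},\qquad \theta=\frac{2}{p+2},
\]
the exponent $\theta$ being forced by the scaling relation $0=(\tfrac12-1)\theta+\tfrac1p(1-\theta)$. Taking expectations and applying H\"older's inequality on $(\Omega,\mathscr F,P)$ with conjugate exponents $1/\theta$ and $1/(1-\theta)$ gives
\[
\mathbb E\|\phi\|_{\mathbb L^\infty(\mathbb R)}\le C_p\big(\mathbb E\|\phi_x\|\big)^{\theta}\big(\mathbb E\|\phi\|_{\mathbb L^p(\mathbb R)}\big)^{1-\theta}.
\]
By Jensen's inequality I would then bound $\mathbb E\|\phi_x\|\le(\mathbb E\|\phi_x\|^2)^{1/2}\le C_\epsilon(2+t)^{-1/4+\epsilon/2}$ via \eqref{5.25}, and $\mathbb E\|\phi\|_{\mathbb L^p(\mathbb R)}\le(\mathbb E\|\phi\|_{\mathbb L^p(\mathbb R)}^p)^{1/p}\le C(2+t)^{-\frac{p-2}{4p}}\ln(2+t)$ via \eqref{5.20}.

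Substituting these two rates, the power of $(2+t)$ becomes
\[
\Big(-\tfrac14+\tfrac\epsilon2\Big)\frac{2}{p+2}-\frac{p-2}{4p}\cdot\frac{p}{p+2}
=-\frac{p}{4(p+2)}+\frac{\epsilon}{p+2}
=-\frac14+\frac{1}{2(p+2)}+\frac{\epsilon}{p+2},
\]
while the logarithmic factor contributes $\ln^{1-\theta}(2+t)\le\ln(2+t)$. Since $\tfrac{1}{2(p+2)}\to0$ as $p\to\infty$, given any $\epsilon>0$ I would fix $p$ so large that $\tfrac{1}{2(p+2)}<\epsilon/2$, and absorb both the remaining $\tfrac{\epsilon}{p+2}$ term and the logarithm into an arbitrarily small algebraic factor via $\ln(2+t)\le C_\delta(2+t)^{\delta}$; this produces $\mathbb E\|\phi\|_{\mathbb L^\infty(\mathbb R)}\le C_\epsilon(2+t)^{-1/4+\epsilon}$. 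The main obstacle is precisely the recognition that one must interpolate against a high $\mathbb L^p$ norm rather than against $\mathbb L^2$: the $\dot{\mathbb H}^1$ factor alone decays only like $(2+t)^{-1/4}$ yet enters with the vanishing weight $\theta=\tfrac{2}{p+2}$, so the bulk of the decay is carried by the $\mathbb L^p$ factor, and the two contributions conspire to give exactly $-\tfrac{p}{4(p+2)}$, converging to the sharp exponent $-\tfrac14$.
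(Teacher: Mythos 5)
Your proposal is correct and follows essentially the same route as the paper: the Gagliardo--Nirenberg interpolation $\|\phi\|_{\mathbb L^\infty(\mathbb R)}\le C_p\|\phi\|_{\mathbb L^p(\mathbb R)}^{p/(p+2)}\|\phi_x\|^{2/(p+2)}$, followed by H\"older/Jensen on the probability space, insertion of the rates \eqref{5.20} and \eqref{5.25}, and then choosing $p$ large with the logarithm absorbed into a small power of $(2+t)$. The only cosmetic difference is the order of H\"older and Jensen: the paper bounds $\mathbb E\big(\|\phi\|_{\mathbb L^p(\mathbb R)}^{p/(p+2)}\|\phi_x\|^{2/(p+2)}\big)$ directly by $\big(\mathbb E\|\phi\|_{\mathbb L^p(\mathbb R)}^p\big)^{1/(p+2)}\big(\mathbb E\|\phi_x\|^2\big)^{1/(p+2)}$, which gives the identical exponent $-\tfrac{p}{4(p+2)}+\tfrac{\epsilon}{p+2}$ you computed.
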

 \begin{proof}
 	Thanks the G-N inequality, we have that
 \begin{eqnarray*}
 \| \phi \|_{\mathbb L^\infty (\mathbb R)} \leq C_p  \| \phi \| _{\mathbb L^p (\mathbb R)} ^{\frac{p}{p+2}} \| \phi_x  \| ^\frac{2}{p+2},
\end{eqnarray*}
which gives that 
\begin{eqnarray}
\begin{array}{lll}
\displaystyle \mathbb E \| \phi \|_{\mathbb L^\infty (\mathbb R)} &\leq& C_p \mathbb E \Big(\| \phi \|_ {\mathbb L^{p} (\mathbb R)}^{\frac{p}{p+2}} \| \phi_x \|^\frac{2}{p+2}\Big) \leq C_p \left(\mathbb E \| \phi \|_ {\mathbb L^{p} (\mathbb R)}^p\right) ^{\frac{1}{p+2}}\Big(\mathbb E \| \phi_x \|^2\Big)^{\frac{1}{p+2}}\\
&\leq&   \displaystyle C_{\epsilon}(2+t)^{-\frac{1}{4}+\epsilon},
\end{array}
\end{eqnarray}
by choosing $p$ sufficiently large. Thus the proof is completed. 

 \end{proof}
 \subsection{Decay rate a.s.}
 
 \begin{lemma}
 	Let $\phi\in X_T$ be the strong solution of \eqref{Perturbation}, it holds that for any $p > 2$, there exists a $\mathscr F_\infty$ measurable random variable  $C_p(\omega)\in \mathbb L^2(\Omega)$ such that 
 	\begin{eqnarray}\label{lpas}
 		\| \phi \|_p^p & \leq & C_p(\omega) (2 + t)^{- \alpha}, ~a.s.~
 		\forall \alpha < \frac{p - 2}{4}.
 	\end{eqnarray} 
 \end{lemma}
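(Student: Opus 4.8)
We want an almost-sure decay rate for $\|\phi\|_p^p$ that nearly matches the expectation bound $\mathbb E\|\phi\|_p^p \le C(2+t)^{-(p-2)/4}\ln^p(2+t)$ from \eqref{5.20}. The loss we accept — any $\alpha < (p-2)/4$, and dropping the logarithm — is exactly the kind of room needed to convert an $\mathbb L^1(\Omega)$ rate in expectation into a pathwise rate via Borel–Cantelli along a discrete sequence of times, then interpolate across the gaps.

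Let me think about what structure I'd exploit.

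The integrated SPDE identity \eqref{3.17} says $\|\phi\|_p^p$ is a supermartingale-like process: its drift is $\le C_1(2+t)^{-p/2}\,dt$ (after dropping the nonnegative dissipation terms) plus the martingale term $C_2\int_{\mathbb R}|\phi|^{p-2}\phi\,\bar u_x\,dx\,dB$. So over a dyadic or geometric grid $t_n$ I'd control $\sup_{t\in[t_n,t_{n+1}]}\|\phi\|_p^p$ by the value at $t_n$ plus a martingale supremum, and use BDG on the latter. The expectation bound \eqref{5.20} gives the pointwise control at grid points; BDG controls the fluctuation inside each interval.

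**The plan.**

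First I would fix $p>2$ and pick a target $\alpha<(p-2)/4$, and choose a geometric time grid, say $t_n = 2^n$, so that $(2+t)$ is comparable to $2^n$ throughout $[t_n, t_{n+1}]$. The idea is to prove that $M_n := \sup_{t\in[t_n,t_{n+1}]}(2+t)^{\alpha}\|\phi(t)\|_p^p$ has summable probability tails, so that $\sup_n M_n$ is a.s.\ finite, which is exactly \eqref{lpas} with $C_p(\omega)=\sup_n M_n$.

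Second, on each interval I would integrate \eqref{3.17} from $t_n$ to $t\le t_{n+1}$, discard the nonnegative terms $\int|\phi|^p\bar u_x + |\phi|^{p-2}\phi_x^2$ on the left, and bound
\begin{equation*}
\sup_{t\in[t_n,t_{n+1}]}\|\phi(t)\|_p^p \;\le\; \|\phi(t_n)\|_p^p \;+\; C_1\!\int_{t_n}^{t_{n+1}}\!(2+s)^{-p/2}\,ds \;+\; C_2\sup_{t\in[t_n,t_{n+1}]}\Big|\int_{t_n}^{t}\!\!\int_{\mathbb R}|\phi|^{p-2}\phi\,\bar u_x\,dx\,dB(s)\Big|.
\end{equation*}
Taking the $r$-th moment for a large even integer $r$ (to get a fast-decaying tail), BDG converts the martingale supremum into $\mathbb E\big[\int_{t_n}^{t_{n+1}}(\int|\phi|^{p-2}\phi\,\bar u_x\,dx)^2\,ds\big]^{r/2}$. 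Here I would use $\bar u_x \le C(2+s)^{-1}$ from Lemma \ref{ito1996} and Cauchy–Schwarz to dominate $(\int|\phi|^{p-2}\phi\,\bar u_x\,dx)^2$ by $(2+s)^{-1}\int|\phi|^{2p-2}\bar u_x\,dx$ or, more simply, by a power of $\|\phi\|_{2p-2}$ times $\|\bar u_x\|$, and then invoke the expectation bounds \eqref{5.20} at the higher exponent together with $\ln^{p}$ factors being absorbable into the $\epsilon$-room. Summing the deterministic drift over $[t_n,t_{n+1}]$ gives a term of order $(2+t_n)^{1-p/2}$, which decays strictly faster than $(2+t_n)^{-\alpha}$ since $\alpha<(p-2)/4<(p-2)/2$.

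Third, combine: each of the three moment contributions, after multiplying by $(2+t_n)^{\alpha r}$, decays geometrically in $n$ (this is where $\alpha$ strictly below the optimal rate and $r$ large are used — the $\ln$ factors and the strict inequality buy a net negative geometric exponent). Hence $\sum_n \mathbb E[(2+t_n)^{\alpha r} M_n^{\,?}] < \infty$ in the appropriate normalization; Chebyshev plus Borel–Cantelli then yields $M_n$ a.s.\ bounded, and a routine check that the bounding random variable lies in $\mathbb L^2(\Omega)$ (choose $r\ge 2$ in the final normalization, or estimate $\mathbb E[(\sup_n M_n)^2]$ directly from the summable series).

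**The main obstacle.** The delicate point is the martingale/BDG estimate: the integrand $\int_{\mathbb R}|\phi|^{p-2}\phi\,\bar u_x\,dx$ involves $\|\phi\|$ at an exponent ($\sim 2p-2$ inside the square) higher than $p$, so closing the loop requires feeding \eqref{5.20} at a large exponent and carefully tracking that the accumulated $\ln^{(\cdot)}(2+t)$ powers stay subordinate to the geometric decay $(2+t_n)^{-(\text{small positive})}$ coming from the $\epsilon$-gap $\tfrac{p-2}{4}-\alpha$. Getting these exponents to balance — and confirming the resulting random constant is genuinely in $\mathbb L^2(\Omega)$, not merely a.s.\ finite — is where the real work lies; everything else is the standard geometric-grid Borel–Cantelli machinery already anticipated by the expectation estimates in Theorems \ref{t1} and \ref{t2}.
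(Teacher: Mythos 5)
Your route is genuinely different from the paper's, and in outline it is viable, but as written it has one concrete unresolved gap: the moment bounds you need are not the ones the paper provides. Your Chebyshev/Borel--Cantelli step at the grid points requires $r$-th moments of $\|\phi(t_n)\|_{\mathbb L^p}^p$ with $r\ge 2$ (indeed $r=2$ is forced on you anyway, since the lemma demands $C_p(\omega)\in\mathbb L^2(\Omega)$, so you must sum $\|M_n\|_{\mathbb L^2(\Omega)}$, not $\mathbb E M_n$). But \eqref{5.20} only controls \emph{first} moments of $\|\phi\|_{\mathbb L^q}^q$ for each $q$; it says nothing about $\mathbb E\|\phi\|_{\mathbb L^p}^{2p}$, which is a second moment of a fixed norm and a different object from $\mathbb E\|\phi\|_{\mathbb L^{2p}}^{2p}$. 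Your phrase ``invoke \eqref{5.20} at the higher exponent'' conflates the two. This is fixable --- e.g.\ interpolate $\|\phi\|_{\mathbb L^p}^{2p}\le \|\phi\|^{3}\,\|\phi\|_{\mathbb L^{4p-6}}^{2p-3}$, apply Cauchy--Schwarz in $\omega$, and use the $\mathbb L^2$-norm moment bound \eqref{claim} together with \eqref{5.20} at exponent $4p-6$ to get $\mathbb E\|\phi\|_{\mathbb L^p}^{2p}\le C(2+t)^{-\frac{p-2}{2}}\ln^{C_p}(2+t)$ --- but that interpolation is precisely the missing step, and it is where your plan currently leans on a tool that does not deliver what is claimed. (A side remark: taking ``a large even integer $r$'' makes this problem strictly worse, since it demands ever higher moments; because your grid is geometric and you have the strict gap $\delta=\frac{p-2}{4}-\alpha>0$, the exponent $r=2$ suffices, with Doob's $\mathbb L^2$ maximal inequality replacing BDG. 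The martingale part of your estimate is fine at $r=2$: the Cauchy--Schwarz trick $\bigl(\int_{\mathbb R}|\phi|^{p-2}\phi\,\bar u_x\,dx\bigr)^2\le (u_+-u_-)\|\bar u_x\|_{\infty}\|\phi\|_{\mathbb L^{2p-2}}^{2p-2}$ only needs first moments, which \eqref{5.20} does give.)

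For comparison, the paper avoids the discretization and the higher-moment issue entirely. It works with the \emph{global} weighted stochastic integral $N_{\alpha,p}(t)=\int_0^t(2+s)^{\alpha}\int_{\mathbb R}|\phi|^{p-2}\phi\,\bar u_x\,dx\,dB(s)$, shows $\sup_t\mathbb E N_{\alpha,p}^2(t)<\infty$ using exactly the first-moment bound \eqref{5.20} at exponent $2p-2$ and $|\bar u_x|\le C(2+t)^{-1}$ (the integral $\int^\infty(2+s)^{2\alpha-1-\frac{p-2}{2}}\ln^{2(p-1)}(2+s)\,ds$ converges precisely because $\alpha<\frac{p-2}{4}$), and then applies Doob's maximal inequality once to get $\sup_t|N_{\alpha,p}(t)|\le C_p(\omega)$ a.s.\ with $C_p(\omega)\in\mathbb L^2(\Omega)$ --- the $\mathbb L^2(\Omega)$ membership is automatic from the martingale structure rather than something to be re-derived by summing second moments. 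It then multiplies \eqref{3.17} by $(2+t)^{\alpha}$ and integrates pathwise; the price of the growing weight is the extra term $(2+t)^{\alpha-\frac{p+2}{4}}\|\phi\|^p$ from the Gagliardo--Nirenberg absorption, which is handled by the preliminary a.s.\ bound $\|\phi\|^2\le C_\epsilon(\omega)(2+t)^{\epsilon}$ of \eqref{44}, itself obtained by the same Doob argument applied to $M^\epsilon$. Your interval-wise scheme trades that preliminary a.s.\ $\mathbb L^2$ bound (which you do not need, since your weight is essentially constant on each dyadic block) for the higher-moment interpolation described above (which the paper does not need). Both trades are legitimate; yours just isn't closed yet.
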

 
 \begin{proof} For any $\epsilon>0$, multiply \eqref{4.6} by $(2+t)^{-\epsilon}$, one has that 
 \begin{eqnarray}\label{5.16}
 \begin{array}{lll}
 &&\displaystyle \frac{1}{2} d(2+t)^{-\epsilon}\| \phi \|^2 + \frac{\epsilon}{2} (2+t)^{-\epsilon-1}\|\phi\|^2dt+(2+t)^{-\epsilon}\Big[\mu\| \phi_x \|^2 d t 
 +\frac{1}{2}\int_{\mathbb R}\phi^2
 \bar{u}_x d x d t \Big]\\& =& 
\displaystyle (2+t)^{-\epsilon}  \int_{\mathbb R} \phi_{} \bar{u}_{x x} dxdt + \frac{1}{2} (2+t)^{-\epsilon} \sigma^2\|
 \phi_x +\bar{u}_x \|^2 d t + (2+t)^{-\epsilon}\int_{\mathbb R} \phi \bar{u}_x d x d B (t) 
 \end{array}
 \end{eqnarray}
 which gives that 
  \begin{eqnarray}\label{5.17}
 \mathbb E\int_0^t(2+s)^{-\epsilon} \int_{\mathbb R} \phi^2
  \bar{u}_x d x d s\le C_\epsilon. 
  \end{eqnarray}
 Define $M^\epsilon(t) = \int_0^t (2+s)^{-\epsilon}\int_{\mathbb R} \phi \bar{u}_x dxdB (s)$, we have
 \begin{eqnarray}\label{5.18}
 \begin{array}{lll}
 E M^\epsilon(t)^2 & = & \displaystyle \mathbb E \int_0^t (2+s)^{-2\epsilon}\left( \int_{\mathbb R} \phi (x) \bar{u}_x (x) d x \right)^2
 d s\\
&  =& \displaystyle\mathbb E \int_0^t (2+s)^{-2\epsilon} \left( \int_{u_-}^{u_+} \phi (\bar{u}) d \bar{u} \right)^2
 d s \\
 & \leq &\displaystyle (u_+ - u_-) \mathbb E\int_0^t  (2+s)^{-2\epsilon}\int_{u_-}^{u_+} \phi^2 (\bar{u}) d \bar{u}
 d s\\
 &=&\displaystyle(u_+ - u_-) \mathbb E \int_0^t (2+s)^{-2\epsilon}\int_{\mathbb R} \phi^2 \bar{u}_x d x d s\le C_\epsilon,
 \end{array} \label{33} 
 \end{eqnarray}
 which implies from Doob's $\mathbb L^p$ inequality that there exists a $\mathscr F_\infty$ measurable random variable  $C_\epsilon(\omega)\in \mathbb L^2(\Omega)$ such that 
 $$
 |M^\epsilon(t)|\le C_\epsilon(\omega), ~a.s.
 $$
Again using \eqref{5.16} yields that 
 \begin{eqnarray}
 \| \phi \|^2  \leq C_\epsilon (2+t)^\epsilon(1 + M^\epsilon(t))\le C_\epsilon(\omega) (2+t)^\epsilon, ~a.s. \label{44} 
 \end{eqnarray}

On the other hand,  for any $0<\alpha < \frac{p - 2}{4}$, 
%
%
 define
 \begin{eqnarray}
 N_{\alpha, p} (t) & = & \int_0^t (2 + s)^{\alpha} \int_{\mathbb R} | \phi |^{p - 2}
 \phi \bar{u}_x d x d B (s). 
 \end{eqnarray}
 A direct computation gives that 
 \begin{eqnarray}
 \begin{array}{lll}
\displaystyle \mathbb E [N_{\alpha, p}^2 (t)] & = & \displaystyle\mathbb E \int_0^t (2 + s)^{2 \alpha} \left( \int_{\mathbb R} |
 \phi |^{p - 2} \phi \bar{u}_x d x \right)^2 ds r\\
 & = & \displaystyle\int_0^t (2 + s)^{2 \alpha} \mathbb E \left( \int_{u_-}^{u_+} | \phi
 (\bar{u}) |^{p - 2} \phi d \bar{u} \right)^2 ds \\
 & \le & \displaystyle C \int_0^t (2 + s)^{2 \alpha}  \mathbb E\int_{\mathbb R} | \phi |^{2 p - 2} \bar{u}_x
 d x d s\\
 &\le & \displaystyle C\int_0^t (2+t)^{2\alpha-1}(2+t)^{\frac{2-p}{2}}\ln^{2(p-1)}(2+t)dt\le C, 
 \end{array} 
 \end{eqnarray}
 where we have used the fact that 
 $\mathbb E \| \phi (t) \|_{2p-2}^{2p-2}\le C(2+t)^{\frac{2-p}{2}}\ln^{2(p-1)}(2+t)$ due to \eqref{5.20} and $|\bar{u}_x|\le C(2+t)^{-1}$. 
 Then there exists a $\mathscr F_\infty$ measurable random variable  $C_p(\omega)\in \mathbb L^2(\Omega)$ such that 
 $$
 |N_{\alpha,p}(t)|\le C_p(\omega),~a.s.
 $$
Multiplying \eqref{3.17} by $(2+t)^\alpha$ and using \eqref{gn2} and \eqref{44} imply that 
 \begin{eqnarray}
 \begin{array}{lll}
 &&\displaystyle d (2+t)^{\alpha} \| \phi \|_p^p + (2 + t)^{\alpha} \int_{\mathbb R} | \phi |^p
 \bar{u}_x dxdt+(2+t)^\alpha\int_{\mathbb R}|\phi|^{p-2}\phi_x^2dxdt\\
  & \leq & \displaystyle C_1 (2 + t)^{\alpha - \frac{p}{2}}dt + C_1 (2 +
 t)^{\alpha - \frac{p + 2}{4}} \| \phi \|^p + C_2 (2 + t)^{\alpha} \int_{\mathbb R} | \phi |^{p - 2} \phi \bar{u}_x d x d B
 (t)\\
   & \leq & \displaystyle C_1 (2 + t)^{\alpha - \frac{p}{2}}dt + C_1 (2 +
   t)^{\alpha - \frac{p + 2}{4}+\frac{p\epsilon}{2}}C_\epsilon(\omega) + C_2 (2 + t)^{\alpha} \int_{\mathbb R} | \phi |^{p - 2} \phi \bar{u}_x d x d B
   (t),~a.s.
   \end{array}
 \end{eqnarray}
 Integrating over $[0, t]$ and  choosing $\epsilon$ sufficiently small so that $\alpha - \frac{p + 2}{4}+\frac{p\epsilon}{2}<-1$, one has that 
 \begin{eqnarray}
 (2 + t)^{\alpha} \| \phi \|_p^p 
  \leq C (1+C(\omega)) + C
 N_{\alpha, p} (t)\le C_p(\omega),~a.s. \label{8} 
 \end{eqnarray}
 which gives \eqref{lpas}. Thus the proof is completed. 
 \end{proof}
  \begin{lemma}
  	Let $\phi\in X_T$ be the strong solution of \eqref{Perturbation}, it holds that for any $\epsilon>0$, there exists a $\mathscr F_\infty$ measurable random variable  $C_\epsilon(\omega)\in \mathbb L^2(\Omega)$ such that  
  	\begin{eqnarray}\label{deriva2}
  	\| \phi _x\|^2  \leq  C_\epsilon(\omega)(2 + t)^{\epsilon}, ~a.s.
  	\end{eqnarray} 
  \end{lemma}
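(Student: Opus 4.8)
The plan is to mimic the weighted-energy argument already used in the proof of \eqref{44}, multiplying the differential inequality \eqref{3.16} for $\|\phi_x\|^2$ by the decaying weight $(2+t)^{-\epsilon}$. By the It\^o product rule this produces a coercive term $\epsilon(2+t)^{-\epsilon-1}\|\phi_x\|^2\,dt$ together with the nonnegative dissipation $(2+t)^{-\epsilon}\|\phi_{xx}\|^2\,dt$ and $(2+t)^{-\epsilon}\int_{\mathbb R}\phi_x^2\bar u_x\,dx\,dt$ on the left, while the drift on the right becomes $C_1(2+t)^{-\epsilon}[(2+t)^{-2}+(2+t)^{-2}\|\phi\|^2+\|\phi\|_{\mathbb L^6(\mathbb R)}^6]\,dt$ and the martingale term becomes $-C_2(2+t)^{-\epsilon}\int_{\mathbb R}\phi_{xx}\bar u_x\,dx\,dB(t)$. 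The weight $(2+t)^{-\epsilon}$ is the crucial device: it is exactly what is needed to render the borderline term $\|\phi\|_{\mathbb L^6(\mathbb R)}^6\sim(2+t)^{-1}$ integrable in time, both in expectation and pathwise.

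First I would take expectation and integrate over $[0,\infty)$. The martingale drops out, and using $\mathbb E\|\phi\|^2\le C\ln(2+t)$ from \eqref{l2} together with $\mathbb E\|\phi\|_{\mathbb L^6(\mathbb R)}^6\le C(2+t)^{-1}\ln^6(2+t)$ from \eqref{5.20} at $p=6$, every drift term is integrable for any $\epsilon>0$; this yields the key estimate
\[
\mathbb E\int_0^\infty(2+s)^{-\epsilon}\|\phi_{xx}\|^2\,ds\le C_\epsilon<\infty.
\]
Next I would control the weighted stochastic integral $M_\epsilon(t):=-C_2\int_0^t(2+s)^{-\epsilon}\int_{\mathbb R}\phi_{xx}\bar u_x\,dx\,dB(s)$. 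By Cauchy--Schwarz and the decay $\|\bar u_x\|^2\le C(2+s)^{-1}$ from Lemma \ref{ito1996} ii), its quadratic variation satisfies, uniformly in $t$,
\[
\mathbb E[M_\epsilon(t)^2]\le C\,\mathbb E\int_0^t(2+s)^{-2\epsilon-1}\|\phi_{xx}\|^2\,ds\le C\,\mathbb E\int_0^t(2+s)^{-\epsilon}\|\phi_{xx}\|^2\,ds\le C_\epsilon,
\]
where I used $(2+s)^{-2\epsilon-1}\le(2+s)^{-\epsilon}$. Hence $M_\epsilon$ is an $\mathbb L^2$-bounded martingale, and Doob's $\mathbb L^2$ inequality gives $\sup_{t\ge0}|M_\epsilon(t)|\in\mathbb L^2(\Omega)$, finite a.s.

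Finally I would integrate the weighted inequality pathwise over $[0,t]$, discard the nonnegative left-hand terms except $(2+t)^{-\epsilon}\|\phi_x\|^2$, and bound the drift integral almost surely. The term carrying $\|\phi\|^2$ is handled by the a.s. bound \eqref{44}, $\|\phi\|^2\le C_\epsilon(\omega)(2+t)^\epsilon$, leaving $\int(2+s)^{-2}\,ds<\infty$; the term carrying $\|\phi\|_{\mathbb L^6(\mathbb R)}^6$ is handled by \eqref{lpas} at $p=6$, choosing $\alpha\in(1-\epsilon,1)$ so that $\int(2+s)^{-\epsilon-\alpha}\,ds<\infty$. Combining these a.s.\ finite drift contributions with the a.s.\ bound on $M_\epsilon(t)$ yields $(2+t)^{-\epsilon}\|\phi_x\|^2\le C_\epsilon(\omega)$ a.s., which is precisely \eqref{deriva2}; since each constituent constant lies in $\mathbb L^2(\Omega)$ and is $\mathscr F_\infty$-measurable, so is the resulting $C_\epsilon(\omega)$. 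The main obstacle is the term $\|\phi\|_{\mathbb L^6(\mathbb R)}^6$ in \eqref{3.16}, whose decay is only $(2+t)^{-1}$ up to logarithms: unweighted it forces $\mathbb E\int_0^\infty\|\phi_{xx}\|^2\,ds$ to diverge logarithmically, destroying the $\mathbb L^2$-boundedness of the martingale and hence any pathwise bound via Doob. The delicate point is to verify that one and the same weight $(2+t)^{-\epsilon}$ simultaneously makes the expected weighted dissipation finite, the quadratic variation of $M_\epsilon$ uniformly bounded, and the pathwise drift integrable, while keeping the loss in the final rate down to an arbitrarily small factor $(2+t)^\epsilon$.
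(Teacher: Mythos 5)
Your proposal is correct and follows essentially the same route as the paper: a weighted energy estimate on \eqref{3.16} giving a finite expected weighted dissipation $\mathbb E\int(2+s)^{-\delta}\|\phi_{xx}\|^2\,ds$, an $\mathbb L^2$-bounded martingale controlled via Doob's inequality, and a pathwise integration using the a.s.\ bounds \eqref{44} and \eqref{lpas} with $p=6$. The only cosmetic differences are that the paper fixes the weight $(2+t)^{-1/2}$ (rather than $(2+t)^{-\epsilon}$) and works with the unweighted martingale $N(t)=-\int_0^t\int_{\mathbb R}\phi_{xx}\bar u_x\,dx\,dB(s)$, absorbing the decay $\|\bar u_x\|_\infty\le C(2+s)^{-1}$ into its quadratic variation before integrating the unweighted inequality \eqref{deriva} pathwise.
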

\begin{proof}  
   Multiplying \eqref{deriva} by $(2+t)^{-\frac12}$, we have  \begin{eqnarray}\label{deriva1}
   \begin{array}{lll}&&	\displaystyle d(2+t)^{-\frac12}\| \phi_x \|^2 + (2+t)^{-\frac12}\| \phi_{xx} \|^2 dt\\[2mm]
   &\leq &\displaystyle C_1(2+t)^{-\frac12} \left((2+t)^{-2} +(2+t)^{-2}\|\phi\|^2+ \| \phi \|_{\mathbb L^6(\mathbb R)}^6 \right)dt\\
   && \displaystyle-C_2 (2+t)^{-\frac12} \int_\mathbb R \phi_{xx}  \bar u_{x}dx dB(t),
   \end{array}
   \end{eqnarray}
 which implies that 
 $$
 \mathbb E\int_0^t(2+s)^{-\frac12}\|\phi_{xx}\|^2dxdt\le C. 
 $$  
%
%
   Define $N(t) = -\int_0^t \int_{\mathbb R} \phi_{x x} \bar{u}_x d x d B (s)$, then in the same way as \eqref{5.18}, one has that 
   \begin{eqnarray}
   \begin{array}{lll}
  \mathbb E N(t)^2&=& \displaystyle\mathbb E \int_0^t \left( \int_{\mathbb R} \phi_{x x} (x) \bar{u}_x (x) d x
   \right)^2 d s 
   \leq  (u_+ - u_-) \int_0^t \| \bar{u}_x \|_{\infty} \mathbb E \int_{\mathbb R} \phi^2_{x
   	x} (x) d x d s \\
  &\le & \displaystyle C\mathbb E \int_0^t  (2+s)^{-\frac12}  \int_{\mathbb R} \phi^2_{x
   	x} d x d s
   \leq  C \label{4}.
   \end{array} 
   \end{eqnarray}
Thus there exists a $\mathscr F_\infty$ measurable random variable  $C(\omega)\in \mathbb L^2(\Omega)$ such that 
$$
|N(t)|\le C(\omega), ~a.s.
$$
Integrating \eqref{deriva} on $[0,t]$, and choosing $p=6$ and $\alpha=-1+\epsilon$ ($\epsilon>0$ is any constant) in \eqref{8}, we get \eqref{deriva2}. The proof is completed. 
 \end{proof}

 \begin{theorem}\label{t3}
 	Let $\phi\in X_T$ be the solution of (\ref{Perturbation}),
 	then for any $\epsilon>0$, there exists a $\mathscr F_\infty$ measurable random variable  $C_\epsilon(\omega)\in \mathbb L^2(\Omega)$ such that 
 	\begin{eqnarray}
 	 \| \phi \|_{\mathbb L^\infty (\mathbb R)}  \leq  C_\epsilon(\omega) (2+t)^{-\frac14+\epsilon},~a.s.
 	\end{eqnarray}
 \end{theorem}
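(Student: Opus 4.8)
The plan is to derive the almost-sure $\mathbb{L}^\infty$ decay by applying the Gagliardo--Nirenberg inequality pathwise, exactly mirroring the structure of the proof of Theorem \ref{t2}, but feeding in the almost-sure bounds \eqref{lpas} and \eqref{deriva2} rather than their expectation counterparts. First I would recall the GN inequality in the form already used above, namely
\begin{eqnarray*}
\| \phi \|_{\mathbb L^\infty (\mathbb R)} \leq C_p \| \phi \|_{\mathbb L^p (\mathbb R)}^{\frac{p}{p+2}} \| \phi_x \|^{\frac{2}{p+2}},
\end{eqnarray*}
which holds for each fixed time $t$ and each realization $\omega$, since it is a purely deterministic Sobolev inequality. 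The strategy is to raise each factor to its exponent and balance the two decay rates.

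Next I would substitute the almost-sure estimates. From \eqref{lpas}, for any $\alpha < \frac{p-2}{4}$ there is a random constant $C_p(\omega) \in \mathbb{L}^2(\Omega)$ with $\| \phi \|_p^p \leq C_p(\omega)(2+t)^{-\alpha}$ a.s., so $\| \phi \|_{\mathbb L^p}^{\frac{p}{p+2}} \leq C_p(\omega)^{1/(p+2)} (2+t)^{-\frac{\alpha}{p+2}}$. From \eqref{deriva2}, for any $\epsilon' > 0$ there is $C_{\epsilon'}(\omega) \in \mathbb{L}^2(\Omega)$ with $\| \phi_x \|^2 \leq C_{\epsilon'}(\omega)(2+t)^{\epsilon'}$ a.s., whence $\| \phi_x \|^{\frac{2}{p+2}} \leq C_{\epsilon'}(\omega)^{1/(p+2)}(2+t)^{\frac{\epsilon'}{p+2}}$. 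Multiplying the two contributions gives, almost surely,
\begin{eqnarray*}
\| \phi \|_{\mathbb L^\infty (\mathbb R)} \leq \widetilde{C}(\omega)\,(2+t)^{-\frac{\alpha - \epsilon'}{p+2}},
\end{eqnarray*}
where $\widetilde{C}(\omega)$ is a product of $\mathbb{L}^2(\Omega)$ random variables. To reach the target exponent $-\frac14 + \epsilon$, I would take $p$ large and $\alpha$ close to its supremum $\frac{p-2}{4}$: then $\frac{\alpha}{p+2} \to \frac14$ as $p \to \infty$, while the $\epsilon'$ term can be made negligible by choosing $\epsilon'$ small. Concretely, given $\epsilon > 0$, fix $p$ large enough and $\alpha$ close enough to $\frac{p-2}{4}$ that $\frac{\alpha - \epsilon'}{p+2} > \frac14 - \epsilon$, which yields the claimed rate with $C_\epsilon(\omega) := \widetilde{C}(\omega)$.

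The one point that needs care is the measurability and integrability of the final random constant. Since $\widetilde{C}(\omega)$ arises as a product of two factors, each of which is a fixed power (with exponent $\frac{1}{p+2} < 1$) of an $\mathbb{L}^2(\Omega)$ random variable, I would invoke Hölder's inequality: the powers $C_p(\omega)^{1/(p+2)}$ and $C_{\epsilon'}(\omega)^{1/(p+2)}$ lie in much higher Lebesgue spaces, and their product remains in $\mathbb{L}^2(\Omega)$ (indeed in $\mathbb{L}^q(\Omega)$ for $q$ well beyond $2$). Each factor is $\mathscr{F}_\infty$-measurable because the underlying random constants in \eqref{lpas} and \eqref{deriva2} are, so $\widetilde{C}(\omega)$ is $\mathscr{F}_\infty$-measurable as well. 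I expect this integrability bookkeeping to be the only genuine obstacle; the decay-exponent optimization is routine interpolation identical to the proof of Theorem \ref{t2}, just run with pathwise constants in place of expectations.
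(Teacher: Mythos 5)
Your proposal is correct and follows exactly the paper's own argument: apply the Gagliardo--Nirenberg inequality $\| \phi \|_{\mathbb L^\infty} \leq C_p \| \phi \|_{\mathbb L^p}^{p/(p+2)} \| \phi_x \|^{2/(p+2)}$ pathwise, insert the almost-sure bounds \eqref{lpas} and \eqref{deriva2}, take $p$ large (and $\alpha$ near $\frac{p-2}{4}$, $\epsilon'$ small), and conclude $C_\epsilon(\omega)\in\mathbb L^2(\Omega)$ via H\"older's inequality on the product of powers of $\mathbb L^2(\Omega)$ random variables. The paper states this in two lines; your write-up simply makes the exponent bookkeeping and the integrability/measurability of the random constant explicit.
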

 \begin{proof}
 	From the G-N inequality, we immediately   obtain from \eqref{lpas} and \eqref{deriva2}  that
 	\begin{eqnarray*}
 		\| \phi \|_{ L^\infty (\mathbb R)} \leq C_p  \| \phi \| _{L^p} ^{\frac{p}{p+2}} \| \phi_x  \| ^\frac{2}{p+2}\le C_\epsilon(\omega) (2+t)^{-\frac14+\epsilon}, ~a.s. 
 	\end{eqnarray*}
 by choosing $p$ sufficiently large. The H{\"o}lder inequality implies that $C_\epsilon(\omega)\in \mathbb L^2(\Omega)$.
 	
 \end{proof}

 \section{Global existence}\label{global existence}
It remains to prove the global existence of strong solution of the perturbed equation \eqref{Perturbation}. Due to the effect of noise, the $\|\phi\|_{L^\infty(\mathbb R)}$ norm may not be uniformly bounded. We adopt a cut-off technique to prove the  global existence of  \eqref{Perturbation}. That is, we consider the following cut-off equation
 \begin{eqnarray}\label{cutoff}
 \left\{
 \begin{array} {ll}
 d\phi + ( \phi \bar u )_x dt +  \frac{1}{2} [ ( \Pi _m \phi )^2 ]_x dt = \mu \phi_{xx} dt + \mu \bar u_{xx} dt + \sigma( \phi_x  + \bar u_x ) dB(t),

 \quad in\ \ \mathbb{R}\times [0,\infty), \\
 \phi|_{t=0}(x)=\phi_0(x), \quad on \ \ \mathbb{R},\\
 \end{array}
 \right.	
 \end{eqnarray}
 where
 \[ \Pi_m :  \mathbb H^1(\mathbb{R}) \longrightarrow \mathbb{H}^1(\mathbb{R}),\quad 
 f \longmapsto  \min \left \{ m, \| f \|_{\mathbb H^1(\mathbb{R})}\right \} \frac{f}{ \| f \|_{\mathbb H^1(\mathbb{R})}}.
 \]
Before studying the equation \eqref{cutoff}, we first derive a useful lemma to treat the cut-off term.

\begin{lemma}\label{lemma9}
	Let $H$ be some Hilbert space. Define a cut-off mapping $\Pi_m$ as follows,   $$\Pi_mx=\frac{x}{\|x\|_H}\min\Big\{m,\|x\|_H\Big\}, ~\forall x\in H.$$ 
	Then for any $m>0$ and $x, y \in H$, it holds that 
	\begin{equation}
	\left\|\Pi_{m} x-\Pi_{m} y\right\|_H \leq\|x-y\|_H.
	\end{equation}
\end{lemma}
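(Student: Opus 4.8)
The plan is to recognize $\Pi_m$ as the radial retraction of $H$ onto the closed ball $\overline{B}(0,m)=\{z\in H:\|z\|_H\le m\}$: it fixes every point of the ball and sends each exterior point radially to the sphere $\|z\|_H=m$. Since $\overline{B}(0,m)$ is closed and convex, $\Pi_m$ coincides with the metric (nearest-point) projection onto it, so the nonexpansiveness would follow immediately from the standard fact that projections onto closed convex subsets of a Hilbert space are $1$-Lipschitz. To keep the argument self-contained, however, I would instead prove the estimate by a direct computation, distinguishing three cases according to whether $\|x\|_H$ and $\|y\|_H$ lie below or above $m$ (with the convention $\Pi_m 0=0$, which removes the apparent $0/0$ ambiguity). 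Throughout I abbreviate $a=\|x\|_H$, $b=\|y\|_H$, $c=\langle x,y\rangle$, and use Cauchy--Schwarz in the form $c\le ab$.

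If $a\le m$ and $b\le m$, then $\Pi_m x=x$ and $\Pi_m y=y$, so the claimed inequality holds with equality. If $a>m$ and $b>m$, then $\Pi_m x=(m/a)x$ and $\Pi_m y=(m/b)y$, whence
\[
\|\Pi_m x-\Pi_m y\|_H^2 = 2m^2\Bigl(1-\tfrac{c}{ab}\Bigr),
\qquad
\|x-y\|_H^2 = a^2+b^2-2c,
\]
and their difference equals $a^2+b^2-2m^2-2c\bigl(1-\tfrac{m^2}{ab}\bigr)$. Because $ab>m^2$, the coefficient $1-m^2/(ab)$ is positive, so replacing $c$ by its upper bound $ab$ can only decrease the expression, which then reduces to $a^2+b^2-2ab=(a-b)^2\ge 0$. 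This settles the second case.

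In the mixed case, say $a\le m<b$, we have $\Pi_m x=x$ and $\Pi_m y=(m/b)y$, so
\[
\|x-y\|_H^2-\|\Pi_m x-\Pi_m y\|_H^2
= b^2-m^2-2c\bigl(1-\tfrac{m}{b}\bigr)
= \tfrac{b-m}{b}\bigl[\,b(b+m)-2c\,\bigr].
\]
Here $b-m>0$, and since $c\le ab\le mb$ one has $b(b+m)-2c\ge b^2+bm-2mb=b(b-m)\ge 0$, so the whole quantity is nonnegative. Combining the three cases yields $\|\Pi_m x-\Pi_m y\|_H\le\|x-y\|_H$ for all $x,y\in H$ and all $m>0$.

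The only delicate point, and the one I would be most careful about, is the sign bookkeeping in the two off-diagonal cases: the substitution $c\le ab$ is favorable precisely because the multipliers $1-m^2/(ab)$ and $(b-m)/b$ are nonnegative under the hypotheses $a,b>m$ and $b>m$ respectively. Once this positivity is verified, each remaining estimate collapses to a perfect square or to the elementary inequality $b\ge m$, and nothing beyond the Hilbert-space identity $\|u-v\|_H^2=\|u\|_H^2-2\langle u,v\rangle+\|v\|_H^2$ and Cauchy--Schwarz is needed.
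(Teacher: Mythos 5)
Your proof is correct, and it takes a genuinely different route from the paper's. You verify the inequality by expanding norms and invoking Cauchy--Schwarz in each of three cases (both points inside the ball, both outside, one on each side), with the sign bookkeeping you flag handled correctly: in the ``both outside'' case the coefficient $1-m^2/(ab)$ is positive because $ab>m^2$, and in the mixed case $c\le ab\le mb$ makes $b(b+m)-2c\ge b(b-m)\ge 0$, so each case collapses to a perfect square or an elementary inequality. The paper instead argues geometrically: after normalizing $\|y\|\le\|x\|$, its Case 1 ($\|y\|\le m\le\|x\|$) introduces the function $\rho(t)=\|(1+t)\Pi_m x-y\|^2$ and shows $\rho'(t)\ge 0$, i.e.\ sliding $\Pi_m x$ radially outward toward $x$ only increases the distance to $y$; its Case 2 (both outside) is then reduced to Case 1 by rescaling both points by $m/\|y\|$, rather than computed directly as you do. Your approach buys self-containedness and pure algebra with no monotonicity or scaling tricks; the paper's buys a more geometric picture and avoids ever expanding the two-exterior-points difference. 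Your opening observation is also worth noting: $\Pi_m$ is exactly the metric projection onto the closed ball $\{\|z\|_H\le m\}$, so the lemma is an instance of the standard fact that nearest-point projections onto closed convex subsets of a Hilbert space are nonexpansive --- the most conceptual proof of all, and one that generalizes beyond balls, though neither you nor the paper ultimately relies on it.
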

\begin{proof}
	For brevity, we denote $\|\cdot\|_H$ as $|\cdot|$. Without loss of generality, we assume that $|y| \leq|x|$ and $|x|\ne 0$. The proof is divided into three cases. 
	\\
	Case 1: $\bf{|y|\le m\leq|x|.}$ Define $\rho(t)=\left|(1+t) \Pi_{m} x-y\right|^2, t\ge 0$. A direct computation yields that 
	$$
	\begin{aligned}
	\rho^{\prime}(t) 
	&=\frac{d}{d t}\langle(1+t) \Pi_{m} x-y,(1+t) \Pi_{m} x-y\rangle=\frac{d}{d t}\left[(1+t)^{2}m^2+|y|^2-2(1+t)\left\langle\Pi_{m} x, y\right\rangle\right] \\
	&=2\left[(1+t)m^2-\left\langle\Pi_{m} x, y\right\rangle\right]\ge 2 t m^2\geqslant 0,
	\end{aligned}
	$$
	which indicates that $\rho(t)$ is an increasing function, where we have used the fact that $|\left\langle \Pi_mx,y\right\rangle|\le m^2$. Note that $\Pi_my=y$ and $\frac{|x|}{m}\Pi_mx=x$, we have 
	$$
	\begin{aligned}
	|x-y|^2 &=\rho\Big(\frac{|x|}{m}-1\Big)\ge \rho(0)=\left|\Pi_{m} x-\Pi_{m} y\right|^2.
	\end{aligned}
	$$
	Case 2: $\bf{m<|y|\leq|x|}.$ Let $\tilde{x}=\frac{m}{|y|} x$ and  $\tilde{y}=\frac{m}{|y|} y$, then $m=|\tilde{y}|\le |\tilde{x}|$.  Since $\Pi_m x=\Pi_m \tilde{x}$ and $\Pi_m y=\Pi_m \tilde{y}$,  we have from case 1 that,  
	\begin{eqnarray*}
		\left|\Pi_{m} x-\Pi_{m} y\right| =\left|\Pi_{m} \tilde x-\Pi_{m} \tilde y\right| 
		\leq \left| \tilde x- \tilde y\right| = \frac{m}{|y|}|x-y|\leq |x-y|.
	\end{eqnarray*}
	Case 3: $\bf{|y|\leq|x|\leq m.}$ In this case $\Pi_mx=x$, $\Pi_my=y$. It is obvious that $|\Pi_mx-\Pi_my|=|x-y|$. 
	
	Therefore the proof is complete. 
\end{proof}

The local existence of \eqref{cutoff} is given as follows. 
\begin{thm}[\bf Local existence]\label{theorem7.1}
Assume $\sigma^2 < 2\mu$. Then for any $\phi_0 \in \mathbb H^{2}(\mathbb R)$, there exists a time $T(m)> 0 $ such that the cut-off equation $(\ref{cutoff})$ has a unique solution in $X_{T(m)}$.
\end{thm}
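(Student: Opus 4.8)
The plan is to construct the local solution as the unique fixed point of a Picard-type map built from the linear regularity theory of Lemma \ref{rs}, exploiting the fact that the cut-off $\Pi_m$ renders the quadratic nonlinearity globally Lipschitz at the $\mathbb{H}^1$ level. First I would freeze the nonlinearity: given $\psi$ in a ball of the $\mathbb{H}^2$-level space underlying $W_T$, set
\begin{equation*}
f(\psi) = -(\psi\bar u)_x - \tfrac{1}{2}\big[(\Pi_m\psi)^2\big]_x + \mu\bar u_{xx}, \qquad g = \sigma\bar u_x,
\end{equation*}
and define $\mathcal{T}\psi$ to be the strong solution of the linear equation
\begin{equation*}
d\phi = \mu\phi_{xx}\,dt + \sigma\phi_x\,dB(t) + f(\psi)\,dt + g\,dB(t), \qquad \phi(0) = \phi_0.
\end{equation*}
The crucial observation is that $\Pi_m\psi = \lambda(\psi)\psi$ is merely a scalar multiple of $\psi$ with $\lambda(\psi)\in(0,1]$ and $\|\Pi_m\psi\|_{\mathbb{H}^1}\le m$, so that $[(\Pi_m\psi)^2]_x = 2\lambda(\psi)^2\psi\psi_x$ and, together with the decay of $\bar u_x,\bar u_{xx}$ from Lemma \ref{ito1996}, one checks that $f(\psi)\in L^2([0,T]\times\Omega;\mathbb{H}^1(\mathbb{R}))$ and $g\in L^2([0,T]\times\Omega;\mathbb{H}^2(\mathbb{R}))$. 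Since $\sigma^2<2\mu$, Lemma \ref{rs} then guarantees that $\mathcal{T}\psi\in W_T\subset X_T$ is well defined.

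Next I would establish the self-mapping and contraction properties on a closed ball $B_R\subset X_{T(m)}$. For the self-map, the a priori energy bound behind Lemma \ref{rs} controls $\|\mathcal{T}\psi\|_T$ by $C(\|\phi_0\|_{\mathbb{H}^2}^2 + T\,\Phi(m))$, where the cut-off cap $\|\Pi_m\psi\|_{\mathbb{H}^1}\le m$ prevents any superlinear dependence on $\psi$; choosing $R$ and then $T(m)$ small keeps $\mathcal{T}(B_R)\subset B_R$. For the contraction, $w=\mathcal{T}\psi_1-\mathcal{T}\psi_2$ solves the linear equation with zero initial data and source $f(\psi_1)-f(\psi_2)$ (the $g$-term cancels). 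Applying It\^o's formula to $\|w\|^2$ and $\|w_x\|^2$, the stochastic integral vanishes in expectation, the coercivity $\sigma^2<2\mu$ absorbs the quadratic variation, and the key difference is handled by writing
\begin{equation*}
(\Pi_m\psi_1)^2-(\Pi_m\psi_2)^2 = (\Pi_m\psi_1+\Pi_m\psi_2)(\Pi_m\psi_1-\Pi_m\psi_2)
\end{equation*}
and invoking Lemma \ref{lemma9}, which gives $\|\Pi_m\psi_1-\Pi_m\psi_2\|_{\mathbb{H}^1}\le\|\psi_1-\psi_2\|_{\mathbb{H}^1}$ while $\|\Pi_m\psi_i\|_{\mathbb{H}^1}\le m$. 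Gr\"onwall then yields $\|\mathcal{T}\psi_1-\mathcal{T}\psi_2\|_T\le C(m,T)\|\psi_1-\psi_2\|_T$ with $C(m,T)\to 0$ as $T\downarrow 0$, and the Banach fixed point theorem on $B_R$ produces the unique solution in $X_{T(m)}$.

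The hard part will be the contraction estimate for the derivative, where one must control the $\mathbb{H}^1$-norm of the difference of the cut-off nonlinearities: the Lipschitz bound of Lemma \ref{lemma9} is stated at the $\mathbb{H}^1$ level, so absorbing the derivative factor $(\Pi_m\psi_i)_x$ in $2\lambda^2\psi\psi_x$ requires using the uniform $\mathbb{H}^2$-ball bound together with the Sobolev embedding $\mathbb{H}^1(\mathbb{R})\hookrightarrow \mathbb{L}^\infty(\mathbb{R})$ to split the product. This is precisely where the cut-off is indispensable: it converts the locally-Lipschitz quadratic term into a globally Lipschitz one in $\mathbb{H}^1$, so the short-time contraction constant can be made arbitrarily small, at the cost of the existence time $T(m)$ shrinking with $m$.
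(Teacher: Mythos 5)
Your proposal is correct and follows essentially the same route as the paper: freeze the nonlinearity at the previous iterate, solve the resulting linear equation via Lemma \ref{rs}, and obtain a contraction in $X_{T(m)}$ from $\mathbb{L}^2$ and $\mathbb{H}^1$ energy estimates for the difference, using Lemma \ref{lemma9} together with the cap $\|\Pi_m\psi\|_{\mathbb{H}^1}\le m$ and the embedding $\mathbb{H}^1(\mathbb{R})\hookrightarrow\mathbb{L}^\infty(\mathbb{R})$, with $\sigma^2<2\mu$ absorbing the quadratic variation and $T(m)$ taken small. The only differences are cosmetic (you run Banach's theorem on a ball, whereas the paper's cut-off makes the contraction global so no self-map step or $\mathbb{H}^2$-ball bound is actually needed), so the two arguments coincide in substance.
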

\begin{proof}
We use the iteration method to prove the local esistence. 
Given $\phi^{n}\in X_T$, where $T$ will be chosen later,  let  $\phi^{n+1}$ be the unique strong solution of 
\begin{eqnarray} \label{n2.7}
	\left\{
	    \begin{array}{ll}
 d \phi^{n+1}-\mu \phi^{n+1}_{xx}  dt =\sigma \phi_x^{n+1}dB(t) - ( \phi^{n} \bar u )_x dt - \frac{1}{2} [ ( \Pi _m \phi^n )^2 ]_x dt  + \mu \bar u_{xx} dt +  \sigma  \bar u_xdB(t), \\
	\phi^{n+1}(0)= \phi_0, \quad \phi_0 \in \mathbb H^{2}(\mathbb R), \quad s\in [0,T].\\
	    \end{array}
	    \right.
\end{eqnarray}
Since $\phi^n\in X_T$, it is straightforward to check that  $f:=- ( \phi^{n} \bar u )_x - \frac{1}{2} [ ( \Pi _m \phi^n )^2 ]_x + \mu \bar u_{xx}\in  L^2([0,T]\times\Omega, \mathbb H^1(\mathbb R))$ and $g:=\sigma \bar u_x  \in  L^2([0,T]\times\Omega, \mathbb H^2(\mathbb R)) $. 
Thanks Lemma \ref{rs}, $\phi^{n+1}$ belongs to $W_T\subset X_T$. Thus we define a mapping 
\begin{equation}
\mathscr T_T: X_T \longrightarrow X_T.
\end{equation} 
It remains to show that $\mathscr T_T$ is a contracting mapping for a suitably small $T$. 
Let $ \Phi^n (t) = \phi^{n+1}(t) - \phi^n(t) \  ( n\geq 1 )$, then we have the following stochastic heat equation for $\Phi^n$, 
\begin{eqnarray}\label{ndif}
	\left\{
	    \begin{array}{ll}
 d \Phi^{n}-\mu \Phi^{n}_{xx}  dt =  - ( \Phi^{n-1} \bar u )_x dt - \frac12\left[ \big(\Pi _m \phi^{n}\big)^2 - (\Pi _m \phi^{n-1})^2 \right]_x dt + \sigma\Phi^{n}_x dB(t)  , \\
	\Phi^{n}(0)= 0, \quad \phi^{n-1}, \phi^n,\Phi^{n-1} \in X_T. \\
	    \end{array}
	    \right.
\end{eqnarray}
Multiplying (\ref{ndif}) by  $\Phi^{n}$ and integrating the result on $\mathbb R$, we have   
\begin{eqnarray}
\begin{array}{lll}
&&\displaystyle	\int_\mathbb R \Phi^{n} d \Phi^{n} dx + \mu \int_\mathbb R (\Phi^{n}_{x})^2 dx dt \\
&=&\displaystyle -\int_\mathbb R\Phi^{n}( \Phi^{n-1} \bar u )_x dx dt -\int_\mathbb R \Phi^n \left[ \big(\Pi _m \phi^{n}\big)^2 - (\Pi _m \phi^{n-1})^2 \right]_x dx dt,
\end{array}
\end{eqnarray}
which imples from It\^{o} formula that 
\begin{eqnarray}\label{7.7}
\begin{array}{lll}
	&&\displaystyle\frac12 d\|\Phi^{n}\|^2 + \mu \|\Phi^{n} _{x}\|^2dt \\ 
	&=& \displaystyle\frac12 \sigma^2 d\langle \| \Phi^{n} \|^2 \rangle_t -\int_\mathbb R\Phi^{n}( \Phi^{n-1} \bar u )_x dx dt -\int_\mathbb R \Phi^n \left[ \big(\Pi _m \phi^{n}\big)^2 - (\Pi _m \phi^{n-1})^2 \right]_x dx dt.
	\end{array}
\end{eqnarray}
We estimate the right hand side of \eqref{7.7}  term by term. The Cauchy inequality gives that 
\begin{eqnarray*}
\left| \int_\mathbb R\Phi^{n}( \Phi^{n-1} \bar u )_x dx\right| = \left| \int_\mathbb R\Phi^{n}_x\Phi^{n-1} \bar u  dx \right| \leq  \epsilon \| \Phi^{n}_x\|^2 + C_\epsilon \| \Phi^{n-1} \|^2. 
\end{eqnarray*}
In addition, we have 
\begin{eqnarray*}
\left| \int_\mathbb R \Phi^n \left[ \big(\Pi _m  \phi^{n}\big)^2 - (\Pi _m \phi^{n-1})^2 \right]_x dx\right| 
 &=&   \left | \int_\mathbb R \Phi_x^n \left[\Pi _m  \phi^{n} + \Pi _m \phi^{n-1} \right]\left[\Pi _m \phi^{n} - \Pi _m \phi^{n-1} \right] dx  \right | \\
 &\leq&  \left \| \Pi _m  \phi^{n} + \Pi _m \phi^{n-1} \right \|_{\mathbb L^\infty (\mathbb R)} \int_{\mathbb R} \left | \Phi^n_x \left[\Pi _m  \phi^{n} - \Pi _m \phi^{n-1} \right] \right | dx  \\
 &\leq& Cm \| \Phi^n_x \|  \left \| \Pi _m  \phi^{n} - \Pi _m \phi^{n-1} \right\|  \\
  &\leq & \epsilon \| \Phi^n_x \|^2  + C_\epsilon m^2 \| \Phi^{n-1} \|^2_{H^1(\mathbb R)}
\end{eqnarray*}
where we have used the fact that 
\begin{equation*}
\left \| \Pi _m \phi^{n}- \Pi _m \phi^{n-1} \right\|_{H^1(\mathbb R)}\le C\|\phi^n-\phi^{n-1}\|_{H^1(\mathbb R)}
\end{equation*}
due to Lemma \ref{lemma9}.
From It\^{o} formula, the quadratic variation reads
\begin{eqnarray}
	d\langle \| \Phi^{n} \|^2 \rangle_t = \sigma^2 \|  \Phi^{n-1}_x  \|^2 dt.
\end{eqnarray}
Collecting all the estimates above, integrating on $[0,T]\times \Omega$ and choosing $\epsilon$ small enough,  we have that  
\begin{eqnarray}\label{n2.11}
\begin{array}{lll}
&&\displaystyle\mathbb E\sup_{0\le t\le T}\| \Phi^{n} \|^2 + \mathbb E \int_0^T \| \Phi^{n}_x(s) \|^2 ds\\ 
&\leq&\displaystyle Cm^2  \int_0^T \mathbb E\| \Phi^{n-1}(s) \|^2_{H^1(\mathbb R)} ds\le Cm^2T\mathbb E\sup_{0\le t\le T}\| \Phi^{n-1} \|^2_{H^1(\mathbb R)},
\end{array}
\end{eqnarray}
 On the other hand, multiplying (\ref{ndif}) by  $-\Phi^{n}_{xx}$ and integrating the result on $\mathbb R$ yield that  
\begin{eqnarray}
\begin{array}{lll}
	&&\displaystyle\frac12 d \| \Phi^{n}_x\|^2+\mu \|\Phi^{n} _{x} \|^2dt \\ 
	&=& \displaystyle \frac12 \sigma^2 d\langle \| \Phi^{n}_{x} \|^2 \rangle_t + \int_\mathbb R\Phi^{n}_{xx}( \Phi^{n-1} \bar u )_x dx dt +\int_\mathbb R \Phi^n_{xx} \left[ \big(\Pi _m \phi^{n}\big)^2 - (\Pi _m \phi^{n-1})^2 \right]_x dx dt, 
	\end{array}
\end{eqnarray}
It is straightforward to check that 
\begin{eqnarray*}
\left| \int_\mathbb R\Phi^{n}_{xx}( \Phi^{n-1} \bar u )_x dx \right| =\left| \int_\mathbb R\Phi^{n}_{xx} (\Phi^{n-1}_x \bar u +\Phi^{n} \bar u_x) dx \right| \leq  \epsilon \| \Phi^{n}_{xx}\|^2 + C_\epsilon \| \Phi^{n-1} \|_{\mathbb H^1(\mathbb R)}^2 
\end{eqnarray*}
and 
\begin{eqnarray}\label{7.11}
d\langle \| \Phi^n_x \|^2 \rangle_t = \sigma^2\| \Phi^{n-1}_{xx}  \|^2 dt.
\end{eqnarray}
In addition, we have
\begin{eqnarray*}
&&\left| \int_\mathbb R \Phi^n_{xx} \left[ \big(\Pi _m \phi^{n}\big)^2 - (\Pi _m \phi^{n-1})^2 \right]_x dx \right| \\
& \leq&   \left | \int_\mathbb R \Phi^n_{xx}\left[\Pi _m  \phi^{n} + \Pi _m \phi^{n-1} \right]\left[\Pi _m \phi^{n} - \Pi _m \phi^{n-1} \right]_x dx  \right | \\
&& + \left |  \int_\mathbb R \Phi^n_{xx}\left[\Pi _m \phi^{n} + \Pi _m \phi^{n-1} \right]_x \left[\Pi _m  \phi^{n} - \Pi _m \phi^{n-1} \right] dx \right | \\
& :=& J_1 + J_2. 
\end{eqnarray*}
Direct computation and Lemma \ref{lemma9} imply that 
\begin{eqnarray*}
J_1 
& \leq&  Cm \int_\mathbb R \left | \Phi^n_{xx} \frac{\partial}{\partial x} \left[\Pi _m  \phi^{n} - \Pi _m \phi^{n-1} \right] \right | dx \leq  Cm \| \Phi_{xx}^n \| \left \| \Pi _m  \phi^{n} - \Pi _m \phi^{n-1} \right\|_{\mathbb H^1(\mathbb R)}  \\
& \leq& Cm \| \Phi_{xx}^n \| \left \| \Phi^{n-1} \right\|_{\mathbb H^1(\mathbb R)} \leq \epsilon \| \Phi_{xx}^n \|^2   +  C_\epsilon m^2 \| \Phi^{n-1}  \|_{\mathbb H^1 (\mathbb R)} ^2 ,
\end{eqnarray*}
and
\begin{eqnarray*}
 J_2 
& \leq&  \left \| \Pi _m  \phi^{n} - \Pi _m \phi^{n-1} \right \|_{\mathbb L^\infty (\mathbb R)} \int_\mathbb R \left | \Phi^n_{xx} \frac{\partial}{\partial x} \left[\Pi _m \phi^{n-1} + \Pi _m \phi^{n-1} \right] \right | dx \\
& \leq& \left \|\Pi _m  \phi^{n}- \Pi _m \phi^{n-1} \right \|_{\mathbb H^1 (\mathbb R)} \| \Phi_{xx}^n \| \left \| \frac{\partial}{\partial x} \left[\Pi _m (\Phi^{n-1} + \phi^{n-1}) + \Pi _m \phi^{n-1} \right] \right\| \\
& \leq& Cm \left \|\Pi _m  \phi^{n}- \Pi _m \phi^{n-1} \right \|_{\mathbb H^1 (\mathbb R)}  \| \Phi_{xx}^n \|  \\
& \leq& \epsilon \| \Phi_{xx}^n \|^2   + C_\epsilon m^2  \| \Phi^{n-1}  \|_{\mathbb H^1 (\mathbb R)} ^2 ,
\end{eqnarray*}
Collecting all estimates above, we have
\begin{eqnarray}\label{n7.12}
\begin{array}{lll}
&&\displaystyle\mathbb E \sup_{0 \leq s \leq T} \| \Phi_x^n (s) \|^2 + (2\mu - 8\epsilon) \int_0^T \mathbb E\| \Phi_{xx}^n(s) \|^2 ds\\
 \leq &&\displaystyle C_\epsilon m^2 T \mathbb E \sup_{0 \leq s \leq T} \| \Phi^{n-1}(s) \|_{\mathbb H^1(\mathbb R)}^2 + \sigma^2 \mathbb E \int_0^T \| \Phi_{xx}^{n-1} \|^2 ds.
 \end{array}
\end{eqnarray}
Combining (\ref{n2.11}) and (\ref{n7.12}), choosing $\epsilon$ and $T$ small so that $C_\epsilon m^2T $ is sufficiently small, and $\frac{\sigma^2}{2\mu-8\epsilon}<1$, we have
\begin{eqnarray}
\begin{array}{lll}
&&\displaystyle\|\Phi^n\|_T^2=\mathbb E \left [\sup_{0 \leq s \leq T} \| \Phi^n (s) \|_{\mathbb H^1(\mathbb R)}^2 +  \int_0^T \| \Phi_{x}^n(s) \|_{\mathbb H^1(\mathbb R)}^2 ds \right ] \\
&\leq &\displaystyle\frac{\sigma^2}{2\mu-8\epsilon} \mathbb E  \left[ \sup_{0 \leq s \leq T} \| \Phi^{n-1}(s) \|_{\mathbb H^1(\mathbb R)}^2 +  \int_0^T \| \Phi_{x}^{n-1} \|_{\mathbb H^1(\mathbb R)}^2 ds \right]\le \frac{\sigma^2}{2\mu-8\epsilon}\|\Phi^{n-1}\|_T^2 
\end{array}
\end{eqnarray}
which implies that 
 $\mathscr T_T$ is contracting in $X_T$, thus the equation \eqref{cutoff} has a  unique strong solution in $X_T$.
\end{proof}
\begin{theorem}\label{theorem7.2}
Assume that $\sigma^2 < 2 \mu$ and $\phi_0 \in \mathbb H^{2} (\mathbb R)$,  then for any $\bar{T} \geq 0 $, there exists a unique solution of (\ref{cutoff}) in $X_{\bar{T}}$.
\end{theorem}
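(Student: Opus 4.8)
The plan is to upgrade the local existence of Theorem \ref{theorem7.1} to an arbitrary finite horizon $\bar T$ by a continuation argument, exploiting the crucial feature that the local existence time $T(m)$ produced there depends only on $m$, $\mu$, $\sigma$ and the auxiliary parameter $\epsilon$, and \emph{not} on the size of the initial datum nor on the starting time. Indeed, the only smallness requirements in that proof were $C_\epsilon m^2 T(m)\ll 1$ together with $\frac{\sigma^2}{2\mu-8\epsilon}<1$, both data-independent; the constant $C_\epsilon$ there involves only $m$ and the uniform bounds $\sup_t\|\bar u\|_{\mathbb L^\infty(\mathbb R)}$, $\sup_t\|\bar u_x\|_{\mathbb L^\infty(\mathbb R)}$ furnished by Lemma \ref{ito1996}. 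Moreover \eqref{cutoff} has time-homogeneous coefficients and the noise has stationary increments, so the same $T(m)$ works whenever the equation is restarted at any time $t_0$ from any $\mathscr F_{t_0}$-measurable datum.

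Concretely, I would fix a partition $0=t_0<t_1<\cdots<t_N=\bar T$ with $t_{k+1}-t_k\le T(m)$, so that $N=\lceil \bar T/T(m)\rceil$ is finite. On the first slab $[t_0,t_1]$ Theorem \ref{theorem7.1} yields a unique solution which, being a fixed point of the mapping $\mathscr T_T$ and hence lying in its range, belongs to $W_{t_1}\subset C([t_0,t_1];\mathbb H^2(\mathbb R))\cap \mathbb L^2((t_0,t_1);\mathbb H^3(\mathbb R))$ by Lemma \ref{rs}; in particular the endpoint $\phi(t_1)$ is an $\mathscr F_{t_1}$-measurable $\mathbb H^2(\mathbb R)$-valued random variable with $\mathbb E\|\phi(t_1)\|_{\mathbb H^2(\mathbb R)}^2<\infty$. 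I would then restart the construction at $t_1$ with initial datum $\phi(t_1)$ and the shifted noise $B(\cdot)-B(t_1)$, invoking Theorem \ref{theorem7.1} in its version allowing random $\mathbb H^2(\mathbb R)$ data adapted to the restart $\sigma$-algebra, to obtain a unique solution on $[t_1,t_2]$. Concatenating the two pieces and iterating this procedure $N$ times produces a solution on all of $[0,\bar T]$; adaptedness of the glued process follows from constructing the pieces progressively against the respective shifted filtrations, and each concatenation is legitimate since the two solutions share the junction value by construction. Uniqueness on $[0,\bar T]$ then follows slab by slab: two solutions coincide on $[t_0,t_1]$ by local uniqueness, and once they agree up to $t_k$ they share the same $\mathscr F_{t_k}$-measurable datum $\phi(t_k)$ and hence coincide on $[t_k,t_{k+1}]$ as well, so the claim follows after $N$ steps.

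The main obstacle I anticipate is the rigorous restart: one must verify that the endpoint of each slab is a genuine $\mathbb H^2(\mathbb R)$-valued datum of finite second moment so that Theorem \ref{theorem7.1} can be reapplied, and that the spatial regularity is preserved under concatenation. This amounts to an a priori bound of the form $\mathbb E\sup_{[t_k,t_{k+1}]}\|\phi\|_{\mathbb H^2(\mathbb R)}^2+\mathbb E\int_{t_k}^{t_{k+1}}\|\phi\|_{\mathbb H^3(\mathbb R)}^2\,ds<\infty$, which is exactly the $W_T$-regularity statement of Lemma \ref{rs} applied with $f:=-(\phi\bar u)_x-\frac12[(\Pi_m\phi)^2]_x\in \mathbb L^2([t_k,t_{k+1}]\times\Omega,\mathbb H^1(\mathbb R))$ and $g:=\sigma\bar u_x\in \mathbb L^2([t_k,t_{k+1}]\times\Omega,\mathbb H^2(\mathbb R))$. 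Here the cut-off bound $\|\Pi_m\phi\|_{\mathbb H^1(\mathbb R)}\le m$ is precisely what keeps $f$ in the required class, with norm controlled solely by $m$ and the $X_T$ bound already established on the slab, thereby closing the estimate and making the restart admissible.
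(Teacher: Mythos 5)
Your proposal is correct and follows essentially the same route as the paper: the paper's proof is exactly the observation that the local existence time $T(m)$ in Theorem \ref{theorem7.1} depends only on $m$ (not on the data or the starting time), so the local solution can be continued slab by slab up to any finite $\bar{T}$. Your write-up merely makes explicit the restart details (endpoint regularity via Lemma \ref{rs}, adaptedness, and slab-by-slab uniqueness) that the paper leaves implicit.
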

\begin{proof}
Note that in Theorem \ref{theorem7.1},  the local time $T$ only depens on $m$, but is independent from the initial data. Thus we can extend the local solution to any time $\bar{T}>0$ for the cut off equation \eqref{cutoff}.	
\end{proof}

On the other hand, it is straightforward to check that all a priori estimates in sections \ref{a priori estimates} and \ref{decay estimates} can be obtained for the cut-off equation \eqref{cutoff} in the same way. Thus we conclude from \eqref{44} and \eqref{deriva2} that 
\begin{eqnarray}\label{6.14}
	\sup_{t \geq 0 } \frac{ \| \phi (t) \|_{\mathbb H^1(\mathbb R)} }{(2+t)^{ \epsilon}} < \ +\infty \quad a.s. \quad \forall \epsilon > 0.
\end{eqnarray}
Now we are ready to prove the global existence of strong solution to \eqref{Perturbation}, that is, 
\begin{theorem}[\bf Global existence]\label{global}
	Assume that $\sigma^2 < 2 \mu$ and $\phi_0 \in \mathbb H^{2} (\mathbb R)$. Then for any fixed  $T > 0 $, there exists a unique strong solution of (\ref{Perturbation}) in $ X_T $ a.s .
	
\end{theorem}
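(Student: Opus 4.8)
The plan is to transfer the global solvability of the cut-off equation \eqref{cutoff} to the original equation \eqref{Perturbation} by a stopping-time localization, exploiting the fact that the cut-off is inactive as long as the solution stays small in $\mathbb H^1$. First I would invoke Theorem \ref{theorem7.2} to obtain, for each fixed $m>0$, a unique global solution $\phi^m\in X_T$ of \eqref{cutoff}. The essential point is that the a priori estimates of Sections \ref{a priori estimates}--\ref{decay estimates} hold for $\phi^m$ with constants independent of $m$: writing $\Pi_m\phi=\lambda(t)\,\phi$ with the $x$-independent scalar $\lambda(t)=\min\{m,\|\phi\|_{\mathbb H^1}\}/\|\phi\|_{\mathbb H^1}\in[0,1]$, every contribution of the cut-off nonlinearity $\tfrac12[(\Pi_m\phi)^2]_x=\tfrac{\lambda^2}{2}(\phi^2)_x$ to the energy, $\mathbb L^p$ and derivative identities equals $\lambda^2$ times the corresponding contribution of the genuine nonlinearity, and is therefore dominated by exactly the same $m$-free quantities (in particular the cancellations $\int|\phi|^{p-2}\phi\,(\phi^2)_x\,dx=0$ persist). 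Hence \eqref{6.14} applies to $\phi^m$ with a single random constant, giving $\sup_{0\le t\le T}\|\phi^m(t)\|_{\mathbb H^1}\le R(\omega)$ for some $m$-independent $R(\omega)<\infty$ a.s.

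Next I would introduce the stopping times $\tau_m=\inf\{t\in[0,T]:\|\phi^m(t)\|_{\mathbb H^1}\ge m\}$, with $\tau_m=T$ when the set is empty. On $[0,\tau_m)$ one has $\|\phi^m(t)\|_{\mathbb H^1}<m$, so $\Pi_m\phi^m=\phi^m$ and $\phi^m$ solves \eqref{Perturbation} verbatim. The key step is the contrapositive of the a priori bound: if $\tau_m<T$ then, by the $\mathbb H^1$-path continuity built into $X_T$, $\|\phi^m(\tau_m)\|_{\mathbb H^1}=m$, while \eqref{6.14} forces $\|\phi^m(\tau_m)\|_{\mathbb H^1}\le R(\omega)$, whence $m\le R(\omega)$. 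Equivalently, on the event $\{m>R(\omega)\}$ we have $\tau_m=T$, so $\phi^m$ already solves \eqref{Perturbation} on all of $[0,T]$ (the endpoint is covered since $\|\phi^m(T)\|_{\mathbb H^1}\le R(\omega)<m$). Because $R(\omega)<\infty$ a.s., for a.e. $\omega$ some integer $m$ exceeds $R(\omega)$, and setting $\phi:=\phi^m$ for any such $m$ produces the required strong solution; adaptedness is preserved since all $\phi^m$ with $m>R(\omega)$ coincide on $[0,T]$, so $\phi$ is the a.s. eventual common value of the adapted family $\{\phi^m\}$.

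For uniqueness I would argue by a localized energy estimate. Given two solutions $\phi,\tilde\phi\in X_T$ of \eqref{Perturbation}, the difference $\Phi=\phi-\tilde\phi$ solves a linear stochastic heat equation with source $-\tfrac12[(\phi+\tilde\phi)\Phi]_x$; introducing $\sigma_n=\inf\{t:\|\phi(t)\|_{\mathbb H^1}\vee\|\tilde\phi(t)\|_{\mathbb H^1}\ge n\}$ and repeating the $\mathbb L^2$ computation leading to \eqref{7.7}, with the $\mathbb L^\infty$ bounds on $\phi,\tilde\phi$ over $[0,\sigma_n)$ furnished by Sobolev embedding, a Gronwall inequality gives $\Phi\equiv0$ on $[0,\sigma_n)$. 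Since both solutions are a.s. bounded in $\mathbb H^1$ on $[0,T]$, $\sigma_n\uparrow T$ a.s., so $\phi=\tilde\phi$. This same uniqueness guarantees the compatibility $\phi^m=\phi^{m'}$ on $[0,\tau_m)$ used in the patching above.

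I expect the main obstacle to be the $m$-uniformity together with the a.s. (rather than merely $\mathbb L^2$) patching: one must verify that the random bound $R(\omega)$ genuinely does not depend on $m$ — which rests precisely on the scalar nature of $\Pi_m\phi$ making the cut-off nonlinearity a harmless $\lambda^2\le1$ rescaling in every energy identity — and that choosing $m$ as a function of $\omega$ still yields a single adapted process, which is resolved by the eventual coincidence of the family $\{\phi^m\}$ on $\{m>R(\omega)\}$.
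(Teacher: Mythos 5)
Your proposal follows essentially the same route as the paper's proof in Section \ref{global existence}: global solvability of the cut-off equation \eqref{cutoff} (Theorem \ref{theorem7.2}), the observation that the a priori estimates of Sections \ref{a priori estimates}--\ref{decay estimates} carry over to the cut-off equation with $m$-uniform constants (which the paper asserts and you usefully justify via the scalar rescaling $\Pi_m\phi=\lambda(t)\phi$, $\lambda^2\le 1$, under which all cancellations persist), the stopping times $\tau_m$, and the fact that $\phi^m$ solves \eqref{Perturbation} verbatim before $\tau_m$. You additionally write out a localized Gronwall uniqueness argument for \eqref{Perturbation} itself, which the paper leaves implicit (there uniqueness is inherited from the contraction-mapping uniqueness of the cut-off equation); your version is correct and makes the patching compatibility explicit.

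The one step you should firm up is the assertion that \eqref{6.14} holds for $\phi^m$ ``with a single $m$-independent random constant $R(\omega)$.'' The $\lambda^2\le 1$ rescaling does make the \emph{deterministic} constants in the energy, $\mathbb L^p$ and derivative estimates independent of $m$, and hence the bounds in expectation are $m$-uniform. But the almost sure bounds \eqref{44} and \eqref{deriva2}, which produce \eqref{6.14}, involve random constants built (via Doob's inequality) from suprema of stochastic integrals driven by the solution path $\phi^m$ itself; these are a priori \emph{different} random variables $R_m(\omega)$ for each $m$, and the rescaling argument only yields $\sup_m\mathbb E\,[R_m^2]\le C<\infty$, not a single a.s.\ bound valid for all $m$ simultaneously. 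As written, the implication ``$m>R(\omega)\Rightarrow\tau_m=T$'' therefore rests on an unproved uniformity. The gap is easily repaired with exactly the tools you have: by Chebyshev, $\mathbb P(\tau_m<T)\le\mathbb P(R_m\ge m(2+T)^{-\epsilon})\le C(2+T)^{2\epsilon}m^{-2}$, so $\mathbb P\bigl(\bigcap_m\{\tau_m<T\}\bigr)=0$ (Borel--Cantelli even gives $\tau_m=T$ for all large $m$ a.s.), which is all your patching and uniqueness steps require. It is fair to note that the paper's own proof is equally terse at precisely this point -- it passes from \eqref{6.14}, stated for a fixed cut-off level, to ``$\tau_\infty=\infty$ a.s.'' without addressing the $m$-dependence of the random constant -- so this is a shared imprecision rather than a flaw peculiar to your route.
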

\begin{proof}
	Let $ \phi_m $ be the global solution obtained in Theorem \ref{theorem7.2}, set $$\tau_m = \inf \left \{ t \geq 0: \| \phi_m(t) \|_{\mathbb H^1 (\mathbb R)} \geq m \right \}. $$ Notice that $ \phi_m(t) = \phi_n(t) $ for $ m \geq n $ and all $ t \leq \tau_n $. Set $ \phi(t) = \phi_n(t) $ be the solution of (\ref{Perturbation}) as $ t \leq \tau_n $. Then $ \tau_\infty = \infty $ a.s. holds from \eqref{6.14}. Thus we have 
	\begin{eqnarray}\label{5.9}
	\mathbb P \left( \tau_\infty < \infty \right) = \bigcup_{N=1}^\infty \mathbb P \left( \tau_m < N, \  \forall m \in \mathbb Z^+ \right ) = 0.
	\end{eqnarray}
Therefore Theorem \ref{global} is proved. 
\end{proof}

\

\begin{proof}[\bf Proof of Theorem \ref{1d}]
	The global existence of the unique strong solution to \eqref{stochastic} is proved in Theorem \ref{global} and the decay rates \eqref{l2-1}, \eqref{rate} and \eqref{rate for as} are obtained in Theorems \ref{t1}, \ref{t2} and \ref{t3}, respectively. Therefore the proof of Theorem \ref{1d} is completed. 
	\end{proof}
\section{Instability of viscous shock wave}\label{instability1}
\begin{proof}[\bf Proof of Theorem \ref{instability}]
 As shown in \eqref{vs}, $\tilde{u}(\xi), \xi=x-st$ is the viscous shock wave of the deterministic Burgers equation \eqref{1.1} satisfying 
\begin{eqnarray}\label{vs1}
\left\{
\begin{array} {ll}
-s\tilde{u}'+\tilde{u}\tilde{u}'=\nu \tilde{u}'',\\
\tilde{u}(\xi)\to u_\pm,~\mbox{as}~\xi\to \pm\infty.
\end{array}
\right.	
\end{eqnarray}
For simplicity, we consider the case that  $s=0$ (i.e., $u_-=-u_+>0$). In this situation,  $\tilde{u}(x)$ has the following explicit formula, 
\begin{equation}
\tilde{u}(x)=-u_-+\frac{2u_-c}{e^{hx}+c},
\end{equation}
where $h=\frac{u_-}{\nu}>0$, and $c>0$ could be any constant concerning with the shift of shock wave. Moreover, $\tilde{u}(x)$ is monotonically decreasing with respect to $x$, see Figure 3 below in the $x$-$u$ plane. 
\begin{center}
	\includegraphics[scale=0.6]{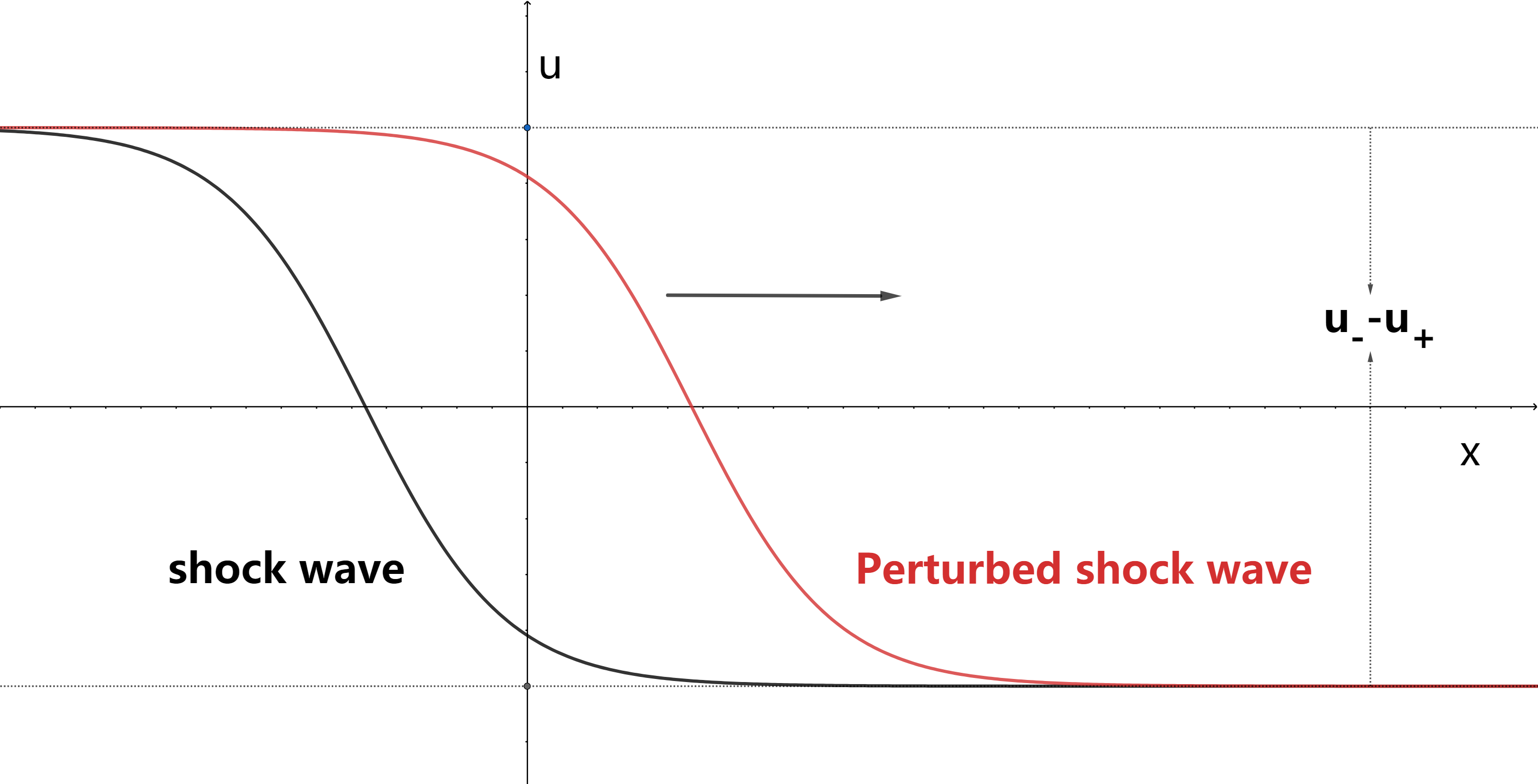}\\[0.1cm]
\end{center}
\begin{center}
	Figure 3
\end{center}
As explained before, the perturbed viscous shock 
is $\tilde{u}^B(t,x):=\tilde{u}(x+\sigma B(t))$ and the two waves coincide at the initial time, i.e., $\tilde{u}(x)=\tilde{u}^B(0,x)$. A direct computation gives that  \begin{eqnarray}
\begin{array}{lll}
d(t)&=:&\displaystyle \mathbb E\|\tilde{u}(x)-\tilde{u}^B(t,x)\|_{L^\infty(\mathbb R)}=\int_{\mathbb R}\|\tilde{u}(x)-\tilde{u}(x+\sigma y)\|_{L^\infty(\mathbb R)}\frac{1}{\sqrt{2\pi t}}e^{-\frac{y^2}{2t}}dy\\
&=&\displaystyle\frac{1}{\sqrt{2\pi }}\int_{\mathbb R}\|\tilde{u}(x)-\tilde{u}(x+\sigma\sqrt{t}z)\|_{L^\infty(\mathbb R)}e^{-\frac{z^2}{2}}dz\\
&=&\displaystyle\frac{1}{\sqrt{2\pi }}\int_{-\infty}^0\|\tilde{u}(x)-\tilde{u}(x+\sigma\sqrt{t}z)\|_{L^\infty(\mathbb R)}e^{-\frac{z^2}{2}}dz
\\
&&\displaystyle+\frac{1}{\sqrt{2\pi }}\int_{0}^{+\infty}\|\tilde{u}(x)-\tilde{u}(x+\sigma\sqrt{t}z)\|_{L^\infty(\mathbb R)}e^{-\frac{z^2}{2}}dz.
\end{array}
\end{eqnarray}
Note that $\tilde{u}(x)$ is monotonically decreasing, it is straightforward to check that for any $z<0$, $\tilde{u}(x+\sigma\sqrt{t}z)$ moves forward as $t$ increases, and  $\|\tilde{u}(x)-\tilde{u}(x+\sigma\sqrt{t}z)\|_{L^\infty(\mathbb R)}$ is monotonically increasing with respect to $t$ and $\lim_{t\to \infty}\|\tilde{u}(x)-\tilde{u}(x+\sigma\sqrt{t}z)\|_{L^\infty(\mathbb R)}=u_--u_+$, see Figure 3. The same argument works for $z>0$.  Thus we conclude from Lebesgue dominated convergence theorem that 
\begin{equation}
\lim_{t\to \infty}d(t)=\frac{1}{\sqrt{2\pi }}\int_{\mathbb R}2u_-e^{-\frac{z^2}{2}}dz=u_--u_+.
\end{equation}
Therefore Theorem \ref{instability} is completed. 
\end{proof}

\section*{Acknowledgements}
Z. Dong is partially supported by National Key R\&D Program of China (No. 2020YFA0712700), 
 Key Laboratory of Random Complex Structure and Data Science, Academy of Mathematics and Systems Science, Chinese Academy of Sciences, NSFC No. 11931004 and  12090014. F. Huang is partially supported by NSFC No. 11688101.

\def\refname{ References}


\begin{thebibliography}{2}
	\bibitem{ABT19}
D. Alonso-Or\'{a}n, A. de Le\'{o}n, S. Takao, \emph{The Burgers' equation with stochastic transport: shock formation, local and global existence of smooth solutions}, Nonlinear Differ. Equ. Appl. vol 26, 1-33(2019).
	
	
\bibitem{cdk}
	G. Chen, Q. Ding and K. Karlsen, \emph{On Nonlinear Stochastic Balance Laws}, Arch. Rat. Mech. Anal., 204, 707-743 (2012).
\bibitem{cg2019} K. Chouk and B. Gess, \emph{Path-by-path regularization by noise for scalar conservation laws}, Journal of Functional Analysis, 277(5), 2019, 1469-1498.
\bibitem{ddt}
G. Da Prato, A. Debussche and R. Temam,
\emph{Stochastic Burgers' equation}, Nonlinear Differential Equations and Applications, Vol.1, 389-402(1994).

\bibitem{dg}
G. Da Prato and D. Gatarek
\emph{Stochastic Burgers equation with correlated noise}, Stochastics and Stochastic Reports, 52, 29-41(1995).

	\bibitem{dv1}
	A. Debussche and J. Vovelle, \emph{Scalar conservation laws with stochastic forcing},  J. Funct. Anal.,259, 1014-1042(2010).
	
	\bibitem{dv2}
	A. Debussche and J. Vovelle, \emph{Invariant measure of scalar first-order conservation laws with stochastic forcing}, Probab. Theory Related Fields, 163, 575-611(2015).

\bibitem{Lillo94}
S. De Lillo,
\emph{The Burgers equation under multiplicative noise}, Physics Letters A, 188(4-6) 305-308(1994).

	
\bibitem{dsxz2018} Z. Dong, X. Sun, H. Xiao and J. Zhai, \emph{Averaging principle for one dimensional stochastic Burgers equation}. J. Differential Equations 265 (2018), no. 10, 4749-4797.
	
 \bibitem{es} W. E, K. Khanin, A. Mazel and Y. Sinai, \emph{Invariant measures for Burgers equation with stochastic forcing}, Ann. Math., 151, 877-960(2000).



\bibitem{fn}
 J. Feng and D. Nualart, \emph{Stochastic scalar conservation laws}, J. Funct. Anal, 255, 313-373(2008). 
 
 \bibitem{f2015} 
 F. Flandoli, \emph{Random perturbations of PDEs and fluid dynamic models}. Lecture Notes in Mathematics 2015, Springer. 
 
\bibitem{fgp2010} F. Flandoli, M. Gubinelli, E. Priola,  \emph{Well-posedness of the transport equation
 by stochastic perturbation}. Inventiones mathematicae 180, 1-53 (2010).
 
 \bibitem{flandoli2021high}
 F. Flandoli, D. Luo, \emph{High mode transport noise improves vorticity blow-up control in 3D Navier--Stokes equations}, Probability Theory and Related Fields,
 180,1,309-363(2021).
 
 \bibitem{g2020} 
  L. Galeati, \emph{On the convergence of stochastic transport equations to a deterministic parabolic one}. 
 	Stoch PDE: Anal Comp (2020) 8: 833-868.
 	
 \bibitem{ggls2020} P. Gassiat, B. Gess, P. Lions and P. Souganidis, 
 \emph{Speed of propagation for Hamilton-Jacobi equations with multiplicative rough time dependence and convex Hamiltonians}, Probability Theory and Related Fields, 176, 421-448 (2020).
 	
\bibitem{gm2018} B. Gess and M. Maurelli, \emph{Well-posedness by noise for scalar conservation laws}, Commun. Partial Differ. Equ., 43(12), 2018, 1702-1736.

\bibitem{gs2019} B. Gess and S. Smith, \emph{Stochastic continuity equations with conservative noise}, Journal de Mathématiques Pures et Appliquées, 128, 2019, 225-263.
 	
 \bibitem{gs2017} B. Gess and P. Souganidis, \emph{Long-time behavior, invariant measures and regularizing effects for stochastic scalar conservation laws}, Comm. Pure Appl. Math., 
70(8), 2017, 1562-1597.
 
 \bibitem{gjs2015} P. Goncalves, M. Jara and S. Sethuraman, \emph{A stochastic Burgers equation from a class of microscopic interactions}. Ann. Probab. 43 (2015), no. 1, 286-338.
 
\bibitem{gj2013} M. Gubinelli and M. Jara,  \emph{Regularization by noise and stochastic Burgers equations}. Stoch. Partial Differ. Equ. Anal. Comput. 1 (2013), no. 2, 325-350.

\bibitem{hm2016} M. Hairer and K. Matetski, \emph{Optimal rate of convergence for stochastic Burgers-type equations}. Stoch. Partial Differ. Equ. Anal. Comput. 4 (2016), no. 2, 402-437.

\bibitem{hw2013} M. Hairer and H. Weber, \emph{Rough Burgers-like equations with multiplicative noise.} Probab. Theory Related Fields 155 (2013), no. 1-2, 71-126.

 
\bibitem{H&N91}
Y. Hattori and K. Nishihara,
\emph{A note on the stabilty of the rarefaction wave of the Burgers equation}, Japan J, Indust. Appl. Math. 8 85-96(1991).
\bibitem{hl}
L. Hsiao and T. Liu, \emph{Convergence to nonlinear diffusion waves for solutions of a system of hyperbolic conservation laws with damping}, Commun. Math. Phys., \textbf{143}, 599-605(1992).
\bibitem{Huang-Xin-Yang}
\newblock F.  Huang, Z.  Xin and T. Yang,
\newblock \emph{Contact discontinuities with general perturbation for gas motion},
\newblock Adv. Math., \textbf{219}, 1246--1297(2008).

\bibitem{hx1} F. Huang and L. Xu, \emph{Decay rate toward the traveling wave for scalar viscous conservation law}, Preprint. 

\bibitem{hx2} F.  Huang and L. Xu, \emph{Decay rate toward the planar rarefaction wave for saclar viscous conservation law in several dimensions}, Preprint. 

\bibitem{kawashima} S. Kawashima, S. Nishibata and M. Nishikawa, \emph{$L^p$ Energy method for multi-dimensional viscous conservation laws and application to the stability of planar waves}, Journal of Hyperbolic Differential Equations, Vol.1, No. 3,  581-603 (2004).

\bibitem{Kr}
N. Krylov, \emph{An Analytic Approach to SPDEs}, Stochastic Partial Differential Equations,ed. by B.L. Rozovskii, R. Carmona. Six Perspectives, Mathematical Surveys and Monographs, American Mathematical Society, Providence, 185-242(1999).  

\bibitem{Olinik64}
A.  Ilin and O. Oleinik,
 \emph{Behavior of the solution of the Cauchy problem for certain quasilinear equations for unbounded increase of time}, Amer. Math. Soc. Transl. Ser. 2 42 19-23(1964).
 
 \bibitem{Ito96}
 K. Ito,
 \emph{Asymptotic decay toward the planner rarefaction waves of solutions for viscous conservation laws in several space dimensions}, Math. Models methods Appl. Sci, 6, 315- 338(1996).
 
 \bibitem{Liu-Yang-Yu-Zhao}
   T. Liu, T. Yang, S.  Yu and H.  Zhao,
  \newblock \emph{Nonlinear stability of rarefaction waves
	for the Boltzmann equation},
  \newblock Arch. Rat. Mech. Anal., \textbf{181}, 333--371(2006).
  
  \bibitem{Liu-Yu}
   T. Liu and S. Yu,
  \newblock \emph{Boltzmann  equation:
  	Micro-macro decompositions  and positivity of shock profiles},
  \newblock Commun. Math. Phys., \textbf{246}, 133-179(2004).
  
  \bibitem{LZ3} T. Liu and Y. Zeng, \emph{Shock waves in conservation laws with physical viscosity}, Mem. Amer. Math. Soc. 234 (2015), no. 1105, vi+168 pp. ISBN: 978-1-4704-1016-2. 
  
  \bibitem{lw2016} G. Lv and J. Wu, \emph{Renormalized entropy solutions of stochastic scalar conservation laws with boundary condition}. J. Funct. Anal. 271 (2016), no. 8, 2308-2338.
  
\bibitem{MN85} A. Matsumura and K. Nishihara, \emph{On the stability of traveling wave solutions of a one-dimensional model system for compressible viscous gas}, Japan J. Appl. Math., 2, 17-25(1985).

\bibitem{M&N86}
A. Matsumura and K. Nishihara,
\emph{Asymptotic towards the rarefaction wave solutions of one-dimensional model system for compressible viscous gas}, Japan J. Appl. Math. 3, 1-13(1986).


\bibitem{MN92} A. Matsumura and K. Nishihara, \emph{Global stability of the rarefaction wave of a onedimensional model system for compressible viscous gas}, Comm. Math. Phys., 144, 325-335(1992).

\bibitem{Nikishiwa99}
M. Nishikawa and K. Nishihara, 
\emph{Asymptotics towards the planar rarefaction wave for viscous conservation law in two space dimensions}, Amer. Math. Soc. Transl. Vol. 352, 1203-1215(2000).

\bibitem{pz2020} X. Peng and R. Zhang, \emph{Approximations of stochastic 3D tamed Navier-Stokes equations}. Commun. Pure Appl. Anal. 19 (2020), no. 12, 5337-5365.


\bibitem{Smoller} 
 J. Smoller,
\emph{Shock Waves and Reaction-Diffusion Equations}, New York: Springer(1994).

\bibitem{Javier04}
J. Villarroel,
\emph{The stochastic Burgers equation in Ito's sense}, Studies. Appl. Math. 1 87-100(2004).

\bibitem{Xin90}
Z. Xin,
\emph{Asymptotic stability of planner rarefaction waves for viscous conservation laws in several dimensions}, Trans. Amer. Math. Soc. 319, 805-820(1990).


 \bibitem{Z} K. Zumbrun, \emph{Stability of large-amplitude shock waves of compressible Navier-Stokes equations}. With an appendix by Helge Kristian Jenssen and Gregory Lyng. Handbook of mathematical fluid dynamics. Vol. III, 311-533, North-Holland, Amsterdam(2004). 

\end{thebibliography}
\end{document}